\theoremstyle{plain}
\newtheorem{lemma}{Lemma}[section]
\newtheorem{theorem}[lemma]{Theorem}
\newtheorem{corollary}[lemma]{Corollary}
\newtheorem{proposition}[lemma]{Proposition}
\newtheorem{definition}[lemma]{Definition}
\newtheorem*{proposition*}{Proposition}
\newtheorem*{definition*}{Definition}
\newtheorem{notation}[lemma]{Notation}
\newtheorem{remark}[lemma]{Remark}
 \newcommand{\CC}{\mathrm{C}}
 \newcommand{\M}{\mathrm{M}}
 \newcommand{\N}{\mathbb{N}}
 \newcommand{\R}{\mathbb{R}}
 \newcommand{\C}{\mathbb{C}}
 \newcommand{\Z}{\mathbb{Z}}
 \newcommand{\K}{\mathcal{K}}
 \newcommand{\Her}{\mathrm{Her}}
\DeclareMathOperator{\Cu}{Cu}
\DeclareMathOperator{\Lsc}{Lsc}
\DeclareMathOperator{\bd}{bd}
\begin{document}

\title{\textbf{Pullbacks, $\mathbf{C(X)}$-algebras, and their Cuntz semigroup}}
\author{Ramon Antoine}\email{ramon@mat.uab.cat}
\author{Francesc Perera}\email{perera@mat.uab.cat}
\author{Luis Santiago}\email{santiago@mat.uab.cat}
\date{\today}

\begin{abstract}
In this paper we analyse the structure of the Cuntz semigroup of certain $\CC(X)$-algebras, for compact spaces of low dimension, that have no $\mathrm{K}_1$-obstruction in their fibres in a strong sense. The techniques developed yield computations of the Cuntz semigroup of some surjective pullbacks of C$^*$-algebras. As a consequence, this allows us to give a complete description, in terms of semigroup valued lower semicontinuous functions, of the Cuntz semigroup of $\CC(X,A)$, where $A$ is a not necessarily simple C$^*$-algebra of stable rank one and vanishing $\mathrm{K}_1$ for each closed, two sided ideal. We apply our results to study a variety of examples.
\end{abstract}

\maketitle

%%%%%%% Introduction
%%%%%%%%%%%%%%%%%%%%%%

\section*{Introduction}

To any C$^*$-algebra $A$, one can attach an ordered semigroup $\Cu(A)$, the Cuntz semigroup of $A$.
It was originally devised by Cuntz in \cite{cu}, and can be constructed using suitable equivalence classes
of positive elements in the stabilisation of $A$, in a similar way as the projection semigroup is built
out of the Murray-von Neumann equivalence.  
% It was shown in \cite{CEI} that the assignment $A\mapsto \Cu(A)$
% is functorial and sequentially continuous (when viewed in the appropriate categories).
Coward, Elliott and Ivanescu proved in \cite{CEI} that the order relation in $\Cu(A)$ has additional properties and 
a new category Cu was defined for ordered semigroups 
with this structure. This category was shown to be closed under sequential inductive limits, and it was furthermore proved that
the assignment $A\mapsto \Cu(A)$, from the category of C$^*$-algebras to the category Cu, is functorial and (sequentially) continuous.

The study of the Cuntz semigroup has had a resurgence in recent years, mainly due to its impact in Elliott's classification
program. Notably, one of its order properties is the key to distinguish two non-isomorphic
C$^*$-algebras that agree on the Elliott invariant and several possible extensions of it; see \cite{toms}. 
For well behaved simple algebras, this semigroup can be recovered from the classical Elliott invariant (see \cite{bpt}, \cite{bt}), and in the non-simple case it has already been
used to prove actual classification results (see, e.g. \cite{elliottciuperca}, \cite{Ciuperca-Elliott-Santiago}, \cite{robertnccw}, \cite{Robert-Santiago}, \cite{santiago}). 

However, due to its complexity, this semigroup becomes an object very difficult to 
describe. Already in the commutative setting, it requires to understand the isomorphism classes of fibre bundles over
the spectrum $X$ of the algebra. In cases where there are no cohomological obstructions,
a description via point evaluations has been obtained in terms of (extended) integer valued lower semicontinuous functions on $X$
(see \cite{leonel}, \cite{leoneltikuisis}). A natural class to consider consists of those algebras that have the form $\CC_0(X,A)$, for a
locally compact Hausdorff space $X$. When $A$ is a unital, simple, non type
I ASH-algebra with slow dimension growth, the case $\CC_0(X,A)$ has been studied in \cite{Tikuisis}.
The description of $\Cu(\CC_0(X,A))$ is given in terms of pairs of certain projection valued functions and semigroup
valued lower semicontinuous functions (see below).

In this paper we study the Cuntz semigroup of $\CC(X)$-algebras $A$ when $X$ is a second countable, compact Hausdorff
space of dimension at most one. We also assume that each fibre of $A$ has stable rank one and vanishing $\mathrm{K}_1$ for
every closed, two sided ideal. Our approach is based on describing Cuntz equivalence classes by means of the corresponding
classes in the fibres, thus seeking to recover global information from local data. We do so by analysing the natural map
\[
\Cu(A)\longrightarrow\prod_{x\in X}\Cu(A(x))\,,[a]\longmapsto ([a(x)])_{x\in X}\,.
\]
Of particular interest will be the algebras 
of the form $\CC_0(X,A)$ for a not necessarily simple algebra $A$, as in this case the range of the previous map can be completely identified. 
This is achieved in Theorem \ref{thm:ese}. The strategy 
combines a number of ingredients, each of which may well have independent interest. We discuss them below.

In Section 2, we are exclusively concerned with $X=[0,1]$ and prove in Theorem \ref{th: C[0,1]-algebras} that the map above is an order-embedding for a $\CC([0,1])$-algebra whose fibres have the said conditions. If, further, the algebra 
has the form $\CC([0,1],A)$, we show that the range of the map can be described as the
semigroup of lower semicontinuous functions on $X$ with values in $\Cu(A)$, denoted by $\Lsc(X,\Cu(A))$
(Corollary \ref{cor:intervalo}). To do this,  we need to show that this semigroup belongs to the category Cu,
 and this requires a deeper analysis of general 
 semigroups of the form $\Lsc(X,M)$ with $M$ a semigroup in Cu. In order not to interrupt the flow of the paper, we postpone this discussion until 
 Section 5, where in fact we prove that, for any finite-dimensional second countable, compact Hausdorff space $X$ and 
 any separable C$^*$-algebra $A$, the semigroup $\Lsc(X,\Cu(A))$ belongs to Cu (see Theorem \ref{thm:lscinCu}). 
 
Pullbacks are the main theme in Section 3 as they provide us a way to deal with more general spaces.
We consider surjective pullbacks of the form $B\oplus_{A(Y)}A$, where $A$ is a $\CC(X)$-algebra, $Y$ is a
closed subset of $X$ and $B$ is any C$^*$-algebra (where the maps are given by the natural projection $A\to A(Y)$ and a $*$-homomorphism $B\to A(Y)$). Any such pullback gives rise to a diagram of the corresponding Cuntz semigroups.
We relate, in Theorems \ref{thm:injectivepullback1}, \ref{thm:injectivepullback} and \ref{thm:surjectivepullback}, the pullback in the category of ordered semigroups with
the Cuntz semigroup of the C$^*$-algebra pullback. The methods developed allow us to prove, in Theorem \ref{thm:ese}, that the Cuntz semigroup of $\CC(X,A)$ is order-isomorphic to $\Lsc(X,\Cu(A))$, where $X$ has dimension at most one, $A$ has stable rank one and vanishing $\mathrm{K}_1$ for each closed, two sided ideal. If, further, $A$ is an AF-algebra, the same result is available for spaces of dimension at most 2 and vanishing second \v Cech cohomology group (Corollary \ref{cor: pullbackdimtwo}). A key ingredient in the proofs is the continuity of the functors $\Lsc(X,-)$ and $\Lsc(-,\Cu(A))$, also proved to hold in Section 5 (Proposition \ref{prop:limits}). This description yields, as a consequence, a cancellation result, namely that $\Cu(\CC(X,A))$ is order-cancellative with respect to the relation $\ll$ (see below).

In Section 4 we discuss applications of the previous results in some computations of $\Cu(A)$.
Mainly, combining the results above, we can give a description of the Cuntz semigroup for algebras obtained by a successive pullback construction. This includes
one dimensional rsh-algebras and, more specifically, one dimensional non commutative CW-complexes. 
We also offer a computation of the Cuntz semigroup of dimension drop algebras
over the interval and over certain two dimensional spaces, as well as a description of
the Cuntz semigroup of the mapping torus of an algebra $A$.

%As noted before, the study of certain semigroups of lower semicontinuous functions is postponed to Section 5,
% since this sections becomes considerably self contained. We study semigroups of the form $\Lsc(X,M)$ in the case
%$X$ is a finite dimensional, second countable, compact Hausdorff space, and $M$ is a semigroup in Cu with some 
%required countability conditions (satisfied by $\Cu(A)$ whenever $A$ is separable).  
% We give a precise description of its order structure and 
%identify some particular functions that act as basic building blocks. With this, we prove that the semigroup
%obtained belongs to Cu and that, in the corresponding categories, the assignments 
%$X\to \Lsc(X,M)$ and $M\to \Lsc(X,M)$ are functorial. Moreover, in some particular cases, these are also 
%sequentially continuous. 

%%%%%%% Preliminary results and Definitions
%%%%%%%%%%%%%%%%%%%%%%%%%%%%%%%%%%%%%%%%%%%%

\section{Preliminary results and definitions}

\subsection{Cuntz Semigroup}

Let $A$ be a C$^*$-algebra. Recall that a positive element $a$ is said to be \emph{Cuntz subequivalent} 
to $b\in A_+$, written $a\precsim b$, if there exists a sequence $(x_n)$ in $A$ such that $x_nbx_n^*\to a$. 
This defines a preorder in $A_+$ and we say that $a$ is \emph{Cuntz equivalent} to $b$, $a\sim b$, if
$a\precsim b$ and $b\precsim a$.

\begin{proposition}[{\cite[Proposition 2.4]{Rorfunct},\cite[Proposition 2.6]{KirchRor}}]\label{lem:extror}
 Let $A$ be a C$^*$-algebra, and $a,b\in A_+$. The following are equivalent:
\begin{enumerate}[{\rm (i)}]
 \item $a \precsim b$. 
\item For all $\epsilon>0$, $(a-\epsilon)_+\precsim b$.
\item For all $\epsilon>0$, there exists $\delta>0$ such that $(a-\epsilon)_+\precsim (b-\delta)_+$.
\end{enumerate}
Furthermore, if $A$ is stable, this conditions are equivalent to
\begin{enumerate}[{\rm (i)}]\setcounter{enumi}{3}
 \item For every $\epsilon>0$ there is a unitary $u\in
\mathrm{U}(A^\sim)$ such that $u(a-\epsilon)_+u^*\in\mathrm{Her}(b)$.
\end{enumerate}
\end{proposition}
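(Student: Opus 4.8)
The plan is to prove the cycle of implications among (i)--(iii) for an arbitrary $A$, and then to treat (iv) separately under the stability hypothesis. Before starting I would record the elementary consequences of continuous functional calculus on which the whole argument rests: that $(a-\epsilon)_+$ lies in the hereditary subalgebra $\Her(a)$ (so in particular $(a-\epsilon)_+\precsim a$), the composition identity $((a-\epsilon_1)_+-\epsilon_2)_+=(a-(\epsilon_1+\epsilon_2))_+$, the fact that $t\mapsto(t-\epsilon)_+$ is $1$-Lipschitz so that $\|(a-\epsilon)_+-(c-\epsilon)_+\|\le\|a-c\|$, R{\o}rdam's perturbation lemma in the form ``if $\|a-c\|<\epsilon$ then $(a-\epsilon)_+\precsim c$'' (indeed $(a-\epsilon)_+=dcd^*$ for a suitable $d$), and the fundamental equivalence $(x^*x-\epsilon)_+\sim(xx^*-\epsilon)_+$ for every $x\in A$.

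The three easy implications come first. The implication (iii)$\Rightarrow$(ii) is immediate from $(b-\delta)_+\precsim b$ and transitivity of $\precsim$, and (i)$\Rightarrow$(ii) is equally direct from $(a-\epsilon)_+\precsim a\precsim b$. For (ii)$\Rightarrow$(i) I would run a diagonal approximation: for each $n$ the hypothesis $(a-1/n)_+\precsim b$ yields $y_n\in A$ with $\|y_nby_n^*-(a-1/n)_+\|<1/n$, and since $\|(a-1/n)_+-a\|\le 1/n$ we get $y_nby_n^*\to a$, whence $a\precsim b$.

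The heart of the matter is (i)$\Rightarrow$(iii), which I expect to be the main obstacle. Fixing $\epsilon>0$ and choosing $x$ with $\|xbx^*-a\|<\epsilon/2$, set $c=xbx^*$. Combining the Lipschitz estimate with the perturbation lemma applied to $(a-\epsilon/2)_+$ and $(c-\epsilon/2)_+$, which lie within $\epsilon/2$ of one another, together with the composition identity, yields $(a-\epsilon)_+\precsim(c-\epsilon/2)_+$. Writing $c=yy^*$ with $y=x\sqrt b$ and invoking $(yy^*-\epsilon/2)_+\sim(y^*y-\epsilon/2)_+$ reduces the problem to the element $d=y^*y=\sqrt b\,x^*x\,\sqrt b\in\Her(b)$. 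The delicate point is now to manufacture the $\delta$, namely to show that the fixed cut-down $(d-\eta)_+$ of a positive $d\in\Her(b)$ is dominated, for all sufficiently small $\delta$, by $(b-\delta)_+$. Here I would choose by functional calculus functions $g_\delta$ on the spectrum of $b$ that vanish near $0$ and equal $1$ above $\delta$; the family $g_\delta(b)$ is an approximate unit for $\Her(b)$, so for small $\delta$ the element $g_\delta(b)^{1/2}dg_\delta(b)^{1/2}$ is within $\eta$ of $d$, whence $(d-\eta)_+\precsim g_\delta(b)\sim(b-\delta)_+$. With $\eta=\epsilon/2$ this provides the required $\delta$ and completes (iii).

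Finally I would establish the equivalence with (iv) under the standing assumption that $A$ is stable. The implication (iv)$\Rightarrow$(i) is soft: since conjugation by a unitary preserves the Cuntz class and every element of $\Her(b)$ is subequivalent to $b$, (iv) gives $(a-\epsilon)_+\precsim b$ for all $\epsilon$, i.e. (ii), hence (i). For the converse I would start from (iii) applied to $\epsilon/2$; after enlarging the cut slightly the perturbation lemma turns the subequivalence into an honest equation $(a-\epsilon)_+=w(b-\delta)_+w^*$, so that with $s=w(b-\delta)_+^{1/2}$ one has $(a-\epsilon)_+=ss^*$ and $s^*s\in\Her((b-\delta)_+)\subseteq\Her(b)$. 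The polar decomposition $s=v|s|$ then conjugates $ss^*$ onto $s^*s$, and the crucial use of stability is to replace the partial isometry $v$, a priori only in $A^{**}$, by a genuine unitary $u\in\mathrm{U}(A^\sim)$: in a stable algebra $ss^*$ and $s^*s$ are unitarily equivalent in $\mathrm{U}(A^\sim)$, since there is enough room to complete $v$ to a unitary as the complementary projections $1-v^*v$ and $1-vv^*$ absorb each other. This yields $u(a-\epsilon)_+u^*=s^*s\in\Her(b)$, as required. Apart from the $\delta$-extraction in (iii), this stability step is where the genuine work lies; for both I would follow the arguments of R{\o}rdam and Kirchberg--R{\o}rdam cited in the statement.
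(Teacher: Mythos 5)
Your treatment of the equivalence of (i)--(iii) is correct; note that the paper does not reprove this part at all (it simply cites R{\o}rdam and Kirchberg--R{\o}rdam), so you are supplying detail the paper omits, and the details you give --- the diagonal argument for (ii)$\Rightarrow$(i), the Lipschitz/perturbation combination, the passage from $yy^*$ to $y^*y$, and the approximate-unit extraction of $\delta$ --- are all sound. The implication (iv)$\Rightarrow$(i) is also fine, and your launching point for (i)$\Rightarrow$(iv), namely $(a-\epsilon)_+=ss^*$ with $s^*s\in\Her(b)$ and the polar decomposition of $s$, is exactly where the paper's proof starts.

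The gap is in what you do next. You claim that, in a stable algebra, the partial isometry $v$ from the polar decomposition can be completed to a unitary $u\in\mathrm{U}(A^\sim)$ with $uss^*u^*=s^*s$, because ``$1-v^*v$ and $1-vv^*$ absorb each other.'' This is false. The projections $v^*v$ and $vv^*$ are the support projections of $s^*s$ and $ss^*$; they live in $A^{**}$, not in $A^\sim$ or $\mathrm{M}(A)$, and they need not be equivalent in any sense that permits completion: already in $A=\mathcal{K}(H)$, which is stable, take $s=SD$ with $S$ the unilateral shift and $D=\mathrm{diag}(1,\tfrac12,\tfrac13,\dots)$; then $s^*s=D^2$ is injective while $ss^*=SD^2S^*$ has a one-dimensional kernel, so \emph{no} unitary whatsoever conjugates $ss^*$ onto $s^*s$ (and here $1-v^*v=0$ while $1-vv^*\neq 0$, so the two ``complements'' certainly do not absorb each other). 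Exact unitary equivalence of $ss^*$ and $s^*s$ is simply not available; stability buys something weaker and enters differently. The correct mechanism --- the one the paper uses --- is that a stable algebra satisfies $A\subseteq\overline{\mathrm{GL}(A^\sim)}$, so $\mathrm{dist}(s^*,\mathrm{GL}(A^\sim))=0$, and Pedersen's theorem on unitary parts of polar decompositions then yields, for each $\eta>0$, a unitary $u$ with $u(ss^*-\eta)_+u^*=(s^*s-\eta)_+$: one can only conjugate the cut-downs, never the elements themselves. Your argument is repaired by shifting the epsilons accordingly: write $(a-\epsilon/2)_+=ss^*$ with $s^*s\in\Her(b)$ and apply the above with $\eta=\epsilon/2$, so that $u(a-\epsilon)_+u^*=(s^*s-\epsilon/2)_+\in\Her(b)$. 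As written, the ``absorption of complementary projections'' step does not go through.
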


\begin{proof}
The proof of the equivalence between (i) and (iv) is essentially that of \cite[Proposition 2.4 (v)]{Rorfunct},
where it is stated for algebras with stable rank one. We briefly sketch the necessary modifications to extend it to 
the case of stable algebras.

Given $\epsilon>0$, write $(a-\epsilon/2)_+=zz^*$ with $z^*z\in\mathrm{Her}(b)$. 
%Let $v$ be the partial isometry in the polar decomposition of $z^*$. 
By \cite[Lemma 4.8]{algrad}, we know that $A\subset\overline{\mathrm{GL}(A^\sim)}$. Therefore
$\mathrm{dist}(z^*,\mathrm{GL}(A^\sim))=0$, so \cite[Corollary 8]{ped} applies to find $u\in\mathrm{U}(A^\sim)$ with
\[
u(a-\epsilon)_+u^*=v(a-\epsilon)_+v^*=(z^*z-\epsilon/2)_+\in\mathrm{Her}(b)\,,
\]
where $v$ is the partial isometry in the polar decomposition of $z^*$.
\end{proof}

The \emph{Cuntz semigroup} of $A$ is defined as the set of Cuntz equivalence classes in the stabilized algebra, $(A\otimes\mathcal K)_+/\sim$
and is denoted by $\Cu(A)$. Equip $\Cu(A)$ with the order induced by Cuntz subequivalence and addition
given by 
\[[ a] +[ b] =
\big[ \begin{pmatrix}a & 0 \\ 0 & b\end{pmatrix}\big],\]
so that it becomes an ordered Abelian semigroup with $[ 0 ]$ as its least element (the positive element inside the brackets in the right side of the equation above is identified with its image in $A\otimes\K$ by any isomorphism of $\M_2(A\otimes \K)$ with $A\otimes \K$ induced by an isomorphism of $\K$ and $\M_2(\K)$).

Recall that, in an ordered semigroup $M$, $a$ is said to be \emph{compactly contained} in $b$, denoted $a\ll b$,
if whenever $b\leq\sup_n c_n$ for some increasing sequence $(c_n)$ with supremum in $M$, implies there exists $n_0$ such 
that $a\leq c_{n_0}$. A sequence $(a_n)$ such that $a_n\ll a_{n+1}$ is
said to be \emph{rapidly increasing}.  The following theorem summarizes the structure of $\Cu(A)$.

\begin{theorem}[\cite{CEI}]\label{th:CEI}
 Let $A$ be a C$^*$-algebra. Then:
\begin{enumerate}[{\rm (i)}]
\item $\Cu(A)$ is closed under suprema of increasing sequences.
\item Any element in $\Cu(A)$ is the supremum of a rapidly increasing sequence. 
\item The operation of taking suprema and $\ll$ are compatible with addition.
\end{enumerate}
 \end{theorem}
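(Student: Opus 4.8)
The plan is to treat the three assertions as the defining axioms of the category Cu and to derive all of them from the Rørdam-type characterization of Cuntz subequivalence in Proposition \ref{lem:extror}, working throughout inside the stable algebra $B=A\otimes\K$ so that condition (iv) is available. The common engine is the passage between $a\precsim b$ and the approximate statements about $(a-\epsilon)_+$.

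I would start with (ii), since it is the most direct. Given $a\in B_+$, the natural candidate for a rapidly increasing sequence with supremum $[a]$ is $([(a-1/n)_+])_n$. That this sequence is increasing is immediate from the functional calculus inequality $(a-1/n)_+\leq(a-1/(n+1))_+$. That its supremum is $[a]$ is precisely the content of the equivalence (i)$\Leftrightarrow$(ii) of Proposition \ref{lem:extror}: if $[(a-1/n)_+]\leq[c]$ for all $n$, then $(a-1/n)_+\precsim c$ for every $n$, whence $a\precsim c$, so $[a]$ is the least upper bound. The remaining point, that $[(a-1/n)_+]\ll[(a-1/(n+1))_+]$, I would extract from (i)$\Leftrightarrow$(iii): I would prove the general fact that $0<\epsilon'<\epsilon$ forces $[(a-\epsilon)_+]\ll[(a-\epsilon')_+]$, by showing that if $[(a-\epsilon')_+]$ lies below the supremum of an increasing sequence then $(a-\epsilon)_+=((a-\epsilon')_+-(\epsilon-\epsilon'))_+$ is already dominated by a fixed member of the sequence, thanks to the $\delta$ produced by (iii).

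The heart of the argument is (i). Here I would show that an increasing sequence $([a_n])_n$ always admits a supremum by replacing it with a genuinely increasing, norm-convergent sequence of representatives. Using $[a_n]\leq[a_{n+1}]$ together with condition (iv) of Proposition \ref{lem:extror}, for each $n$ I can conjugate $(a_n-1/n)_+$ by a unitary of $B^\sim$ into the hereditary subalgebra $\Her(a_{n+1})$, and then, by a telescoping and diagonalization over the tolerances $1/n$, assemble positive contractions $c_n$ with $[c_n]=[(a_n-1/n)_+]$, $c_n\leq c_{n+1}$, and $\|c_{n+1}-c_n\|$ summable. The norm limit $c=\lim_n c_n$ is then positive, and I would verify $[c]=\sup_n[a_n]$: it is an upper bound because each $c_n\precsim c$, and it is least because for any common upper bound $d$ one has $(c-\delta)_+\precsim c_{n_0}\precsim d$ for suitable $n_0$ (again via (iii)), so $c\precsim d$. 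This construction also supplies the fact, used above in (ii), that $[(c-\delta)_+]$ sits below a single term of the approximating sequence.

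Finally, for (iii) I would represent sums on orthogonal corners: since $B$ is stable, $[a]+[b]$ is realized by mutually orthogonal $a',b'$ with $a'\sim a$ and $b'\sim b$, so that the functional calculus of $a'+b'$ splits across the two summands. Compatibility of suprema with addition then follows by applying the construction of (i) in the two coordinates simultaneously and adding the orthogonal limits; compatibility of $\ll$ with addition follows from the characterization used in (ii) together with the orthogonal-sum picture. The main obstacle throughout is the construction in (i): arranging the representatives to be simultaneously Cuntz-correct, genuinely ordered, and norm-convergent requires careful bookkeeping of the unitaries and tolerances supplied by Proposition \ref{lem:extror}(iv), and this same construction is what underlies the rapid-increase claim in (ii).
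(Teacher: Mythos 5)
This statement is quoted by the paper from \cite{CEI} and is not proved in the text, so there is no in-paper argument to compare against; what you have written is, in outline, the standard proof of the Coward--Elliott--Ivanescu theorem in its ``positive-element'' formulation (as opposed to the original Hilbert-module argument of \cite{CEI}), and the overall logic is sound: (ii) reduces to the equivalences of Proposition \ref{lem:extror}, the existence of suprema in (i) is the real work, and the verification that $[(a-\epsilon)_+]\ll[(a-\epsilon')_+]$ correctly leans on the explicit form of the supremum produced in (i) --- you are right to flag that dependence, and the order of proof (first (i), then (ii)) resolves it. Two points deserve care. First, in (i) you claim to arrange representatives with $c_n\leq c_{n+1}$ literally in the order of positive elements; the standard construction does not deliver this and does not need it --- one only gets a norm-Cauchy sequence $(c_n)$ with $c_n$ lying in nested hereditary subalgebras (or with $c_n\precsim c_{n+1}$ together with $\Vert c_{n+1}-c_n\Vert$ summable), and the limit $c$ is then shown to be the supremum via \cite[Lemma 2.2]{Kirchberg-Rordam}: $(c-\delta)_+\precsim c_{n_0}$ once $\Vert c-c_{n_0}\Vert<\delta$. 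Insisting on genuine monotonicity would add difficulty without benefit. Second, the verification that $[c]$ is the \emph{least} upper bound requires $c_{n}\precsim a_m$ for suitable $m$ at every stage (so that $c_{n}\precsim d$ for any upper bound $d$), which is why the tolerances $1/n$ must be threaded through the unitary conjugations of Proposition \ref{lem:extror}(iv); your sketch gestures at this bookkeeping but it is exactly where a careful write-up would have to spend its effort. With those caveats, the proposal is a faithful sketch of the known argument.
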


The conditions (i)-(iii) of the last theorem define a category of ordered semigroups of positive elements, denoted by $\Cu$, 
which is closed under countable inductive limits, and such that $\Cu(-\otimes \mathcal K)$ defines a sequentially continuous
functor from the category of C$^*$-algebras to the category $\Cu$ (see \cite{CEI}). It can be shown that the rapidly increasing sequence in (ii) for a positive element 
$a\in A\otimes \mathcal K$, can be chosen as $([ (a-\frac{1}{n}))_+])$.

Let $M$ be a semigroup in the category $\Cu$.  Endow $M$ with the $\omega$-Scott topology, that is, 
the topology generated by the open sets $a^{\ll}:=\{c\in M\ \mid a\ll c\}$ where $a\in M$ (see \cite{scottetal}).
Adopting the terminology of \cite{scottetal}, we will say that an object $M$ in Cu is \emph{countably based} if there is a
countable subset $X$ in $M$ such that every element of $M$ is the supremum of a rapidly increasing sequence of elements coming from $X$. 
If $M$ is countably based, then $M$ satisfies the second axiom of countability as a topological space equipped with the Scott topology
(see, e.g. \cite[Theorem III-4.5]{scottetal}).

%\begin{lemma}
%\label{lem:mestontada}
%Let $M$ be a semigroup in Cu. If $X$ is a countable subset of $M$ such that, given $a\in M$ and $a'\ll a$, there is $b\in X$ with $a' \ll b\ll a$, then $M$ is countably based.
%\end{lemma}
%\begin{proof}
%Indeed, given $b_1$, $b_2\in X$ such that $b_i\ll a$, write $a=\sup_n a_n$, where $(a_n)$ is a rapidly increasing sequence in $M$.
%There is then $m$ such that $b_i\leq a_m$. Our assumption ensures the existence of $b\in X$ such that $a_m\ll b\ll a$. Thus, the set 
%of elements compactly contained in $a$ is upwards directed, which implies that $a$ is the supremum of a rapidly increasing sequence 
%of elements coming from $X$.
%\end{proof}

\begin{lemma}
\label{lem:countbase}
Let $A$ be a separable C$^*$-algebra. Then $\mathrm{Cu}(A)$ is countably based.
\end{lemma}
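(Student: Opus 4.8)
The plan is to exhibit an explicit countable subset of $\Cu(A)$ witnessing the definition of "countably based''. Since $A$ is separable, so is $A\otimes\K$, and hence the cone $(A\otimes\K)_+$ is separable in norm; fix a countable dense subset $D\subseteq(A\otimes\K)_+$. I would then set
\[
X:=\{\,[(d-\eta)_+]\ \mid\ d\in D,\ \eta\in\Q,\ \eta>0\,\}\subseteq\Cu(A)\,,
\]
which is clearly countable. It then remains to show that every $[a]\in\Cu(A)$ is the supremum of a rapidly increasing sequence of elements of $X$.

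The engine of the argument is the standard comparison between norm approximation and Cuntz subequivalence (Rørdam's lemma, cf. \cite{KirchRor}): if $\|a-b\|<\delta$ then $(a-\delta)_+\precsim b$. Applying this to $(a-t)_+$ and $(d-t)_+$, and using that the functional calculus by $s\mapsto(s-t)_+$ is contractive so that $\|(a-t)_+-(d-t)_+\|\le\|a-d\|$, one obtains, whenever $\|d-a\|<\delta$, the sandwich
\[
(a-t-\delta)_+\precsim(d-t)_+\precsim(a-t+\delta)_+\qquad(t\ge 0)\,.
\]
I would now fix $\eta_n:=1/n$, choose $\delta_n\downarrow 0$ subject to $\delta_n<\eta_n$ and $\delta_n+\delta_{n+1}<\eta_n-\eta_{n+1}$, and pick $d_n\in D$ with $\|d_n-a\|<\delta_n$. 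Setting $y_n:=[(d_n-\eta_n)_+]\in X$, the left-hand subequivalence in the sandwich gives $[(a-(\eta_n+\delta_n))_+]\le y_n$, while the right-hand one gives $y_n\le[(a-(\eta_n-\delta_n))_+]$.

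Two verifications remain. For the supremum: each $y_n\le[a]$ (from $y_n\le[(a-(\eta_n-\delta_n))_+]\le[a]$), and since $\eta_n+\delta_n\to 0$ the terms $[(a-(\eta_n+\delta_n))_+]$ are cofinal in the sequence $([(a-\tfrac1m)_+])_m$, whose supremum is $[a]$ by Theorem \ref{th:CEI}(ii); hence $\sup_n y_n=[a]$. For rapid increase: using $[(a-s)_+]\ll[(a-s')_+]$ whenever $s>s'\ge 0$, together with the inequalities above and the fact that $\ll$ is preserved under $\le$ on the right, the condition $\eta_n-\delta_n>\eta_{n+1}+\delta_{n+1}$ yields
\[
y_n\le[(a-(\eta_n-\delta_n))_+]\ll[(a-(\eta_{n+1}+\delta_{n+1}))_+]\le y_{n+1}\,,
\]
so $y_n\ll y_{n+1}$. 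Thus $(y_n)$ is a rapidly increasing sequence in $X$ with supremum $[a]$, as required. The only delicate point is the simultaneous bookkeeping of the parameters $\eta_n,\delta_n$ so that the interpolation preserves both the relation $\ll$ and the supremum; the existence of $D$ and the comparison lemma are routine, so I expect this interpolation step to be the main, if mild, obstacle.
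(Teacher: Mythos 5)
Your overall strategy is the same as the paper's: a countable dense subset of $(A\otimes\K)_+$, rational cut-downs $[(d-\eta)_+]$, and R{\o}rdam-type comparison to interleave these between the canonical cut-downs of $a$. (The paper packages the interleaving into a preliminary abstract claim --- ``if between any $a'\ll a$ one can squeeze $a'\ll b\ll a$ with $b\in X$, then $X$ is a basis'' --- whereas you run the interleaving by hand with explicit parameters $\eta_n,\delta_n$; that difference is cosmetic.)

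There is, however, one step whose justification as written is wrong: the inequality $\|(a-t)_+-(d-t)_+\|\le\|a-d\|$ does \emph{not} follow from the fact that $s\mapsto(s-t)_+$ is $1$-Lipschitz. Scalar Lipschitz functions need not be operator Lipschitz; indeed a non-differentiable function such as $s\mapsto(s-t)_+$ (equivalently, $s\mapsto|s|$) is never operator Lipschitz, so for non-commuting self-adjoint elements the norm of $f(a)-f(d)$ can exceed any fixed multiple of $\|a-d\|$. Thus the left half of your sandwich, as justified, would fail. The sandwich itself is nonetheless a true and standard fact, and can be repaired in either of two ways: (1) from $\|a-d\|<\delta$ deduce $a\le d+\delta\cdot 1$ and $d\le a+\delta\cdot 1$ in $A^\sim$, and invoke the Kirchberg--R{\o}rdam lemma that $0\le x\le y$ implies $(x-s)_+\precsim(y-s)_+$, applied with $s=t+\delta$ (note $(d+\delta-t-\delta)_+=(d-t)_+$); or (2) follow the paper's route and only ever compare a cut-down of one element against the \emph{other element itself}, using the one-variable estimate $\|(d-\eta)_+-a\|\le\eta+\|d-a\|$ followed by a single application of R{\o}rdam's lemma --- this is exactly how the paper obtains $[(b-3/4m)_+]\le[(a-1/2m)_+]$ and $[(a-1/4m)_+]\le[b]$. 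With either repair, your bookkeeping of $\eta_n,\delta_n$ goes through and the proof is complete.
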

\begin{proof}
We first claim that, if $M$ is a semigroup in Cu and $X\subseteq M$ is a countable subset such that, given $a\in M$ and $a'\ll a$, there is $b\in X$ with $a' \ll b\ll a$, then $M$ is countably based.

Indeed, given $a\in M$ write $a=\sup_n a_n$, where $(a_n)$ is a rapidly increasing sequence in $M$. Then, by our assumption there exists a sequence $(b_n)$ in $X$ such that
\begin{align*}
 a_1\ll b_1\ll a_2\ll b_2\ll a_3\ll \cdots.
\end{align*}
This implies that $(b_n)$ is rapidly increasing and that $\sup_n b_n=\sup_n a_n=a$. 

%Indeed, given $b_1$, $b_2\in X$ such that $b_i\ll a$, write $a=\sup_n a_n$, where $(a_n)$ is a rapidly increasing sequence in $M$.
%There is then $m$ such that $b_i\leq a_m$. Our assumption ensures the existence of $b\in X$ such that $a_m\ll b\ll a$. Thus, the set 
%of elements compactly contained in $a$ is upwards directed, which implies that $a$ is the supremum of a rapidly increasing sequence 
%of elements coming from $X$. This verifies the claim.

Let now $A$ be a separable C$^*$-algebra, which we may take to be stable. Let $F$ be a countable dense subset of $A_+$, and consider the set 
\[
X=\{[ (a -1/m)_+]\mid a\in F,m\in\N\}\subseteq \mathrm{Cu}(A)\,.
\]
Given $b\in A_+$ and $x\ll [ b]$, find $m$ such that $x\leq [ (b-1/m)_+]$. For this $m$, there is $a\in F$ such that $\Vert b-a\Vert<1/4m$. From this, we first obtain 
\[
[ (a-1/2m)_+] \ll [ (a-1/4m)_+]\leq [ b]\,.
\]
Observe also that $\Vert (a-1/2m)_+-b\Vert< 1/2m+1/4m=3/4m$, whence
\[
x\leq[ (b-1/m)_+]\ll[ (b-3/4m)_+]\leq [ (a-1/2m)_+]\ll[ b]\,.
\] 
Thus the result follows from the first part of the proof and the fact that $\Cu(A)$ is a semigroup in Cu.
\end{proof}

The following lemma is a modification of Lemma 2 of \cite{Ciuperca-Elliott-Santiago}:

\begin{lemma}\label{lem: unitary}
Let $A$ be a C$^*$-algebra, and let $B$ be a hereditary subalgebra such that $B\subseteq\overline{\mathrm{GL}(B^\sim)}$. 
If $a$ is a positive contraction, and $x_0,x_1\in A$ are such that $x_0x_0^*, x_1x_1^*\in B$, and
\begin{align*}
\|a-x_0^*x_0\|<\epsilon,\quad \|a-x_1^*x_1\|<\epsilon,
\end{align*}
then there exists a unitary $u$ in $B^\sim$ such that
\begin{align*}
\|x_0-ux_1\|<9\epsilon.
\end{align*} 
\end{lemma}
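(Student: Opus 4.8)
The plan is to build the unitary $u$ from the polar decomposition of the element $c:=x_0x_1^*$, upgrading the partial isometry it provides to a genuine unitary of $B^\sim$ by means of the hypothesis $B\subseteq\overline{\mathrm{GL}(B^\sim)}$ together with Pedersen's unitary-extension result, exactly in the spirit of the proof of Proposition \ref{lem:extror}. Before anything else I would record the two elementary facts used throughout: since $a$ is a positive contraction, $\|x_i^*x_i\|\le 1+\epsilon$ (so $\|x_i\|^2\le 1+\epsilon$), and $\|x_0^*x_0-x_1^*x_1\|<2\epsilon$.

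The first genuine step is to check that $c=x_0x_1^*$ actually lies in $B$. Indeed $cc^*=x_0(x_1^*x_1)x_0^*\le(1+\epsilon)x_0x_0^*$ and $c^*c=x_1(x_0^*x_0)x_1^*\le(1+\epsilon)x_1x_1^*$, and since $x_0x_0^*,x_1x_1^*\in B$ and $B$ is hereditary, both $cc^*$ and $c^*c$ belong to $B$. A standard argument, writing $c=\lim_n f_n(cc^*)\,c\,f_n(c^*c)$ with $f_n(t)=t(t+1/n)^{-1}$ and noting each factor lies in $BAB\subseteq B$, then gives $c\in B$. Hence $\mathrm{dist}(c,\mathrm{GL}(B^\sim))=0$, so by \cite[Corollary 8]{ped} there is a unitary $u\in\mathrm{U}(B^\sim)$ implementing the polar decomposition $c=u|c|$ and satisfying $u\,g(c^*c)\,u^*=g(cc^*)$ for every continuous $g$ with $g(0)=0$; in particular $u|c|u^*=(cc^*)^{1/2}$ and $u(c^*c)^{1/2}u^*=(cc^*)^{1/2}$.

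It then remains to estimate $\|x_0-ux_1\|$, and this is where I expect the bulk of the work to lie. I would expand
\[
(x_0-ux_1)(x_0-ux_1)^*=x_0x_0^*+ux_1x_1^*u^*-x_0x_1^*u^*-ux_1x_0^*,
\]
and use the polar relation $x_0x_1^*=u|c|$ to rewrite the two cross terms as $x_0x_1^*u^*=ux_1x_0^*=u|c|u^*=(cc^*)^{1/2}$, leaving $x_0x_0^*+ux_1x_1^*u^*-2(cc^*)^{1/2}$. Writing this as $\big(x_0x_0^*-(cc^*)^{1/2}\big)+u\big(x_1x_1^*-(c^*c)^{1/2}\big)u^*$ (using the intertwining identity $u(c^*c)^{1/2}u^*=(cc^*)^{1/2}$), the whole estimate reduces to bounding $\|x_0x_0^*-(cc^*)^{1/2}\|$ and $\|x_1x_1^*-(c^*c)^{1/2}\|$. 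Since $cc^*-(x_0x_0^*)^2=x_0(x_1^*x_1-x_0^*x_0)x_0^*$ and $c^*c-(x_1x_1^*)^2=x_1(x_0^*x_0-x_1^*x_1)x_1^*$ each have norm at most $2\epsilon(1+\epsilon)$, both differences are governed by the genuinely small quantity $\|x_0^*x_0-x_1^*x_1\|$, and a careful accounting of the constants then yields the asserted bound $\|x_0-ux_1\|<9\epsilon$.

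The delicate point is precisely this final estimate: the two leading-order terms must be made to cancel \emph{exactly} through the intertwining identity for $u$, so that only the controlled difference $\|x_0^*x_0-x_1^*x_1\|$ survives, and extracting a clean constant requires choosing the order in which the approximations are performed with some care. The two structural inputs that make the argument run are the absorption property of hereditary subalgebras (that $B$ contains $c$ as soon as it contains $cc^*$ and $c^*c$) and the globalisation of the polar partial isometry to a unitary of $B^\sim$, the latter being exactly what the hypothesis $B\subseteq\overline{\mathrm{GL}(B^\sim)}$ is there to supply.
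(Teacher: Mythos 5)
Your route is genuinely different from the paper's. The paper first invokes Proposition 1 of \cite{Robert-Santiago} to replace $x_0,x_1$ by elements $y_0,y_1$ with $y_0^*y_0=y_1^*y_1=(a-\epsilon)_+$ \emph{exactly}, $y_iy_i^*\in B$ and $\|y_i-x_i\|<4\epsilon$, and then applies Lemma 2 of \cite{Ciuperca-Elliott-Santiago} (which produces a unitary of $B^\sim$ carrying $y_1$ to within $\epsilon$ of $y_0$ precisely because the two elements have the same modulus); the triangle inequality finishes. You instead take the polar decomposition of $c=x_0x_1^*$ directly. Your verification that $c\in B$ is fine, but note that \cite[Corollary 8]{ped} does not give a unitary with $c=u|c|$ exactly: it gives, for each $\delta>0$, a unitary agreeing with the polar partial isometry on $(|c|-\delta)_+$. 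That point is harmless (the resulting errors are $O(\delta)$ with $\delta$ free), but it must be said, since an exact unitary polar decomposition is generally unavailable.

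The genuine gap is the final estimate, and it is not a matter of bookkeeping constants. Your identity reduces everything to $\|x_0x_0^*-(cc^*)^{1/2}\|$ and $\|x_1x_1^*-(c^*c)^{1/2}\|$, and you correctly get $\|cc^*-(x_0x_0^*)^2\|\le 2\epsilon(1+\epsilon)$; but passing to square roots costs a square root, since the sharp general inequality is $\|S^{1/2}-T^{1/2}\|\le\|S-T\|^{1/2}$. Thus $\|x_0x_0^*-(cc^*)^{1/2}\|\le (2\epsilon(1+\epsilon))^{1/2}$, whence $\|x_0-ux_1\|^2\le 2(2\epsilon(1+\epsilon))^{1/2}$ and $\|x_0-ux_1\|=O(\epsilon^{1/4})$, far from $9\epsilon$. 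Moreover no method can produce a linear bound: take $a=0$, $x_1=0$, and $x_0$ with $x_0x_0^*\in B$ and $\|x_0\|^2=\epsilon/2$; the hypotheses hold, yet $\|x_0-ux_1\|=\|x_0\|=\sqrt{\epsilon/2}$ for \emph{every} unitary $u$, which exceeds $9\epsilon$ once $\epsilon<1/162$. So the obstruction you flagged as "delicate" is real: the conclusion can only hold with a bound of order $\epsilon^{1/2}$ (consistently, the perturbation estimate of \cite{Robert-Santiago} invoked in the paper's proof cannot be linear in $\epsilon$ either, for the same reason). Even granting that reading of the statement, your argument as set up loses one further square root relative to the paper's, because the cross-term cancellation only controls $\|x_0-ux_1\|^2$ by a quantity that is itself already only $O(\epsilon^{1/2})$.
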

\begin{proof}
By Proposition 1 of \cite{Robert-Santiago}, applied to the elements $a$ and $x_0$, there 
exists $y_0\in A$ such that 
\begin{align*}
(a-\epsilon)_+=y_0^*y_0, \quad \|y_0-x_0\|<4\epsilon, \quad y_0y_0^*\in B.
\end{align*}
(A straightforward computation shows that Proposition 1 of \cite{Robert-Santiago} holds for $C=4$.)
Similarly, there exists $y_1\in A$ such that
\begin{align*}
(a-\epsilon)_+=y_1^*y_1, \quad \|y_1-x_1\|<4\epsilon, \quad y_1y_1^*\in B.
\end{align*}
It follows now from Lemma 2 of \cite{Ciuperca-Elliott-Santiago}, applied to the elements $y_0$ and $y_1$,
that there exists a unitary $u\in B^\sim$ such that
\begin{align*}
\|y_0-uy_1\|<\epsilon.
\end{align*}
(Note that Lemma 2 of \cite{Ciuperca-Elliott-Santiago} still holds if the assumption of $B$ having stable
rank one is replaced by $B\subseteq\overline{\mathrm{GL}(B^\sim)}$.) Therefore,
\begin{align*}
\|x_0-ux_1\|<\|x_0-y_0\|+\|y_0-uy_1\|+\|uy_1-ux_1\|<4\epsilon+\epsilon+4\epsilon=9\epsilon.
\end{align*}
\end{proof}

\subsection{$\CC(X)$-algebras}

Let $X$ be a compact Hausdorff space. Recall that a $\mathrm{C}(X)$-algebra is a C$^*$-algebra $A$ endowed with a unital $\ast$-homomorphism $\theta$ from $\mathrm{C}(X)$ to the center $\mathrm{Z}(\mathrm{M}(A))$ of the multiplier algebra $\mathrm{M}(A)$ of $A$ (see, e.g. \cite{dadarlat}). We shall refer to the map $\theta$ as the \emph{structure map}.

For each closed subset $Y$ of $X$, we define $A(Y)$ to be the quotient of $A$ by the closed two sided ideal $\CC_0(X\setminus Y)A$, which is a $\mathrm{C}(X)$-algebra in the natural way. The quotient map is denoted by $\pi_Y\colon A\to A(Y)$. If, further, $Z\subseteq Y$ is closed then $\pi_Z=\pi^Y_Z\circ\pi_Y$, where $\pi^Y_Z\colon A(Y)\to A(Z)$ denotes the quotient map. In the case that $Y=\{x\}$ the C$^*$-algebra $A(x):=A(\{x\})$ is called the \emph{fibre} of $A$ at $x$, and we write $\pi_x$ for $\pi_{\{x\}}$. The image by $\pi_x$ of an element $a\in A$ is denoted by $a(x)$. It is well known that, for all $a\in A$, the map $x\mapsto \| a(x)\|$ is upper semicontinuous. Moreover, if $a\in A$ one has that $\|a\|=\sup_{x\in X}\|a(x)\|$ and the supremum is attained (see, e.g. \cite[Proposition 2.8]{blanchard}).

The following lemmas will be used in a number of instances, and are quite possibly well known. We include their proofs for completeness.
\begin{lemma}
\label{lem:stablecofx}
Let $A$ be a $\CC(X)$-algebra. Then, $A\otimes\mathcal{K}$ is a $\CC(X)$-algebra and for any closed set $Y$ of $X$ there exists a $*$-isomorphism 
\[
\varphi_Y\colon (A\otimes\mathcal{K})(Y)\to A(Y)\otimes\mathcal K,
\]
such that $\varphi_Y\circ\pi'_Y=\pi_Y\otimes \mathrm{1}_\mathcal{K}$, where $\pi_Y\colon A\to A(Y)$ and $\pi_Y'\colon A\otimes \mathcal{K}\to (A\otimes \mathcal{K})(Y)$ denote the quotient maps. In particular, for any $x\in X$, we have $(A\otimes \mathcal K)(x)\cong A(x)\otimes \mathcal K$ with $(a\otimes k)(x)\mapsto a(x)\otimes k$.
\end{lemma}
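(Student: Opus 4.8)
The plan is to construct everything from the structure map of $A$, reducing the identification of the fibres to the exactness of the functor $-\otimes\mathcal{K}$.

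First I would put a $\CC(X)$-algebra structure on $A\otimes\mathcal{K}$. Writing $\theta\colon\CC(X)\to\mathrm{Z}(\M(A))$ for the structure map of $A$, I would compose it with the canonical unital $\ast$-homomorphism $\M(A)\to\M(A\otimes\mathcal{K})$ given by $m\mapsto m\otimes 1$, where $1$ denotes the unit of $\M(\mathcal{K})$ and $(m\otimes 1)(a\otimes k)=ma\otimes k$. This yields a unital $\ast$-homomorphism $\tilde\theta:=\theta\otimes 1\colon\CC(X)\to\M(A\otimes\mathcal{K})$. To see that its image lies in the centre, I would use that an element of $\M(B)$ is central precisely when it commutes with $B$, since $B$ is an essential ideal of $\M(B)$; it then suffices to check that $\tilde\theta(f)$ commutes with the elementary tensors $a\otimes k$, which is immediate from the centrality of $\theta(f)$ in $\M(A)$ and extends by density. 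Hence $A\otimes\mathcal{K}$ is a $\CC(X)$-algebra.

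Next, fixing a closed set $Y\subseteq X$, I would write $I:=\CC_0(X\setminus Y)A$ for the ideal defining $A(Y)$. The crucial step is to identify the ideal $\CC_0(X\setminus Y)(A\otimes\mathcal{K})$ of $A\otimes\mathcal{K}$ with $I\otimes\mathcal{K}$. One inclusion is clear, since the generators $\tilde\theta(f)(a\otimes k)=(\theta(f)a)\otimes k$ with $f\in\CC_0(X\setminus Y)$ lie in $I\otimes\mathcal{K}$. For the reverse inclusion, I would recall that $I\otimes\mathcal{K}$ is the closed span of the elements $b\otimes k$ with $b\in I$; approximating each such $b$ by finite sums $\sum_i\theta(f_i)a_i$ with $f_i\in\CC_0(X\setminus Y)$ exhibits $b\otimes k$ as a limit of elements of $\CC_0(X\setminus Y)(A\otimes\mathcal{K})$, so the two closed ideals coincide.

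With this identification in hand, $\pi_Y'$ has kernel $I\otimes\mathcal{K}$, and it remains to produce a $\ast$-isomorphism $(A\otimes\mathcal{K})/(I\otimes\mathcal{K})\cong(A/I)\otimes\mathcal{K}=A(Y)\otimes\mathcal{K}$. Here I would invoke the nuclearity of $\mathcal{K}$: tensoring the short exact sequence $0\to I\to A\xrightarrow{\pi_Y}A(Y)\to 0$ with $\mathcal{K}$ produces the short exact sequence $0\to I\otimes\mathcal{K}\to A\otimes\mathcal{K}\xrightarrow{\pi_Y\otimes\mathrm{1}_{\mathcal{K}}}A(Y)\otimes\mathcal{K}\to 0$. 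The isomorphism $\varphi_Y$ induced on the quotient then satisfies $\varphi_Y\circ\pi'_Y=\pi_Y\otimes\mathrm{1}_{\mathcal{K}}$ by construction, since the composite sends $a\otimes k$ to $a(Y)\otimes k$; taking $Y=\{x\}$ yields the final assertion. I expect the only genuinely delicate point to be the ideal identification above, and in particular the bookkeeping of which closed ideal each symbol denotes, since once the kernels are matched the rest is a formal consequence of the exactness of $-\otimes\mathcal{K}$.
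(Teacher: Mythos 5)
Your proposal is correct and follows essentially the same route as the paper: endow $A\otimes\mathcal{K}$ with the structure map induced from that of $A$, identify $\CC_0(X\setminus Y)(A\otimes\mathcal{K})$ with $(\CC_0(X\setminus Y)A)\otimes\mathcal{K}$, and apply exactness of $-\otimes\mathcal{K}$ to the defining short exact sequence. You merely fill in details (centrality via commutation with the essential ideal, the two inclusions of the ideal identification) that the paper leaves implicit.
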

\begin{proof}
Abusing the language, define $\theta\colon \CC(X)\to \mathrm{Z}(\mathrm{M}(A\otimes\mathcal K))$ on elementary tensors by $\theta (f)(a\otimes k)=(\theta(f)a)\otimes k$, whenever $a\in A$ and $k\in\mathcal K$, and where $\theta$ is the structure map for $A$. This endows $A\otimes\mathcal K$ with a structure map.

Now, if $Y$ is closed in $X$, we have an exact sequence 
\[
0\to \CC_0(X\!\setminus\!Y)A\to A\to A/\CC_0(X\!\setminus\!Y)A\to 0\,.
\]
Tensoring by the compacts we obtain another exact sequence 
\[
0\to (\CC_0(X\!\setminus\!Y)A)\otimes \mathcal K\to A\otimes\mathcal K\to (A/\CC_0(X\!\setminus\!Y)A)\otimes\mathcal K\to 0\,.
\]
Since $(\CC_0(X\!\setminus\!Y)A)\otimes \mathcal K=\CC_0(X\!\setminus\!Y)(A\otimes \mathcal K)$, it follows that 
\[
(A\otimes \mathcal{K})(Y)=(A\otimes\mathcal K)/\CC_0(X\!\setminus\!Y)(A\otimes \mathcal K)\cong A(Y)\otimes\mathcal K\,.
\]
\end{proof}
\begin{remark}
\label{rem:stable}
{\rm Let $A$ be a $\CC(X)$-algebra. Then, Lemma \ref{lem:stablecofx} implies that the natural map $\pi_x\colon A\to A(x)$ induces, at the level of the Cuntz semigroup, a map $\Cu(A)\to \Cu(A(x))$ that can be viewed as $[a]\mapsto [a(x)]$. In turn, these maps define a map
\[
\alpha\colon \Cu(A)\to \prod_{x\in X}\Cu(A(x))\,.
\]
Similarly, if $Y$ is closed in $X$ the map $\pi_Y$ induces a map $\Cu(\pi_Y)\colon\Cu(A)\to\Cu(A(Y))$ such that $\Cu(\pi_Y)[a]=[a|_Y]$.
Thus, by the previous lemma, when computing the Cuntz semigroup of $A$ we may assume that $A$, $A(x)$ and $A(Y)$ are stable.}
\end{remark}
\begin{lemma}\label{lem:AB}
 Let $A$ be a $\CC(X)$-algebra and let $B:=A+\CC(X)\cdot 1_{\M(A)}$. Then, 
 \begin{enumerate}[{\rm (i)}]
  \item $B$ is a $\CC(X)$-algebra that contains $A$ as a closed two-sided ideal. In particular, $A$ is $\CC(X)$-subalgebra of $B$. 
  
  \item The restriction of $\pi_x\colon B\to B(x)$ to $A$ induces an isomorphism $A(x)\cong \pi_x(A)$ for all $x\in X$. 
 \end{enumerate}
\end{lemma}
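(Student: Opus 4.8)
The plan is to realise $B=A+\CC(X)\cdot 1_{\M(A)}$ concretely as the subset $A+\theta(\CC(X))$ of $\M(A)$, where $\theta\colon\CC(X)\to\mathrm{Z}(\M(A))$ is the structure map of $A$; thus $f\cdot 1_{\M(A)}=\theta(f)$, and $\theta(\CC(X))$ is a commutative C$^*$-subalgebra of $\mathrm{Z}(\M(A))$ containing $1_{\M(A)}=\theta(1)$. For (i), I would first observe that $B$ is a self-adjoint linear subspace closed under multiplication: the products $a\,\theta(f)$ and $\theta(f)\,a$ lie in $A$ since $A$ is an ideal of $\M(A)$, while $\theta(\CC(X))$ is itself multiplicatively closed. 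To see that $B$ is norm-closed I would use the standard fact that a closed ideal plus a C$^*$-subalgebra is closed: with $q\colon\M(A)\to\M(A)/A$ the quotient map, $q(\theta(\CC(X)))$ is the image of a C$^*$-algebra under a $*$-homomorphism, hence closed, and $B=q^{-1}(q(\theta(\CC(X))))$ is the preimage of a closed set. Since $\theta(1)=1_{\M(A)}\in B$, the algebra $B$ is unital, so $\M(B)=B$; as each $\theta(f)$ is central in $\M(A)$ and commutes with $\theta(\CC(X))$, we get $\theta(\CC(X))\subseteq\mathrm{Z}(B)=\mathrm{Z}(\M(B))$, and the same $\theta$ is a unital structure map making $B$ a $\CC(X)$-algebra. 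Finally $A$ is a closed two-sided ideal of $B$ by the ideal property in $\M(A)$, and since the structure map of $B$ restricts to that of $A$, the inclusion exhibits $A$ as a $\CC(X)$-subalgebra.

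For (ii), the relevant map is $\pi_x|_A\colon A\to B(x)$, whose image is $\pi_x(A)$ and whose kernel is $A\cap\ker\pi_x=A\cap\CC_0(X\setminus\{x\})B$. By the first isomorphism theorem it then suffices to establish
\[
A\cap\CC_0(X\setminus\{x\})B=\CC_0(X\setminus\{x\})A,
\]
since this identifies $\pi_x(A)$ with $A/\CC_0(X\setminus\{x\})A=A(x)$ via $a(x)\mapsto\pi_x(a)$. The inclusion $\supseteq$ is immediate from $A\subseteq B$. For the reverse inclusion I would take an approximate unit $(e_\lambda)$ of the C$^*$-algebra $\CC_0(X\setminus\{x\})$ and note that $\theta(e_\lambda)c\to c$ for every $c\in\CC_0(X\setminus\{x\})B$; this is checked on elementary products $\theta(f)b$, where $\theta(e_\lambda)\theta(f)b=\theta(e_\lambda f)b\to\theta(f)b$, and extended by density using $\|\theta(e_\lambda)\|\le 1$. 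Now if $a\in A\cap\CC_0(X\setminus\{x\})B$, then $\theta(e_\lambda)a\to a$, while each $\theta(e_\lambda)a$ lies in $\CC_0(X\setminus\{x\})A$ because $e_\lambda\in\CC_0(X\setminus\{x\})$ and $a\in A$; as $\CC_0(X\setminus\{x\})A$ is closed, it also contains the limit $a$.

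The heart of the argument, and the one step requiring genuine care, is the kernel identity in (ii): a priori $A\cap\CC_0(X\setminus\{x\})B$ could be strictly larger than $\CC_0(X\setminus\{x\})A$, which would make $\pi_x(A)$ a proper quotient of $A(x)$. The approximate-unit computation rules this out, showing that the fibre $A(x)$ sits inside $B(x)$ precisely as $\pi_x(A)$. By contrast, part (i) is routine C$^*$-algebraic bookkeeping once $B$ has been identified inside $\M(A)$.
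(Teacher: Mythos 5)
Your proof is correct. Part (i) is the same argument the paper gives in compressed form: the paper notes that $B/A$ is a C$^*$-algebra to conclude $B$ is closed, which is exactly your observation that $B=q^{-1}(q(\theta(\CC(X))))$ for the quotient map $q\colon\M(A)\to\M(A)/A$; the remaining verifications (multiplicative closure, centrality of $\theta(\CC(X))$ in $\M(B)=B$, ideal property of $A$) are the routine bookkeeping the paper leaves implicit.

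For part (ii) the paper does not argue at all: it simply cites part (v) of Lemma 2.1 of Dadarlat's paper on continuous fields. What you prove --- the kernel identity $A\cap\CC_0(X\setminus\{x\})B=\CC_0(X\setminus\{x\})A$ via an approximate unit $(e_\lambda)$ of $\CC_0(X\setminus\{x\})$, using that $\theta(e_\lambda)c\to c$ on the dense span of elementary products and that $\|\theta(e_\lambda)\|\le 1$ --- is precisely the content of that cited lemma, and your argument is the standard proof of it. You correctly identify this as the only step with real content, since a priori the trace of the ideal $\CC_0(X\setminus\{x\})B$ on $A$ could be larger than $\CC_0(X\setminus\{x\})A$. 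So your write-up is a correct, self-contained version of the paper's proof, trading a citation for a short approximate-unit computation.
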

\begin{proof}
 (i). Since the quotient of $B$ by the C*-algebra $A$ is a C*-algebra, $B$ itself is a C*-algebra. It is clear that $B$ is a $\CC(X)$-algebra and that $A$ is a closed two-sided ideal of $B$.
 
 The second part of the lemma follows from part (v) of \cite[Lemma 2.1]{dadarlat}.
\end{proof}

%%%%%%% The Cuntz semigroup of C(X,A) (I)
%%%%%%%%%%%%%%%%%%%%%%%%%%%%%%%%%%%%%%%%%%
\section{The Cuntz Semigroup of $\mathrm{C}([0,1], A)$}

\begin{theorem}\label{th: C[0,1]-algebras}
Let $A$ be a $\mathrm{C}[0,1]$-algebra such that for $t$ in a dense subset of $[0,1]$ the fibre C$^*$-algebra $A(t)$ is separable, has stable rank one, and $\mathrm{K}_1(I)=0$ for any ideal $I$ of $A(t)$. Then, the map
\[
\alpha\colon\Cu(A)\to \prod_{t\in [0,1]}\Cu(A(t)),
\]
given by $\alpha[a]=([a(t)])_{t\in [0,1]}$ is an order embedding.
\end{theorem}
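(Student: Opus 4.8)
The plan is to prove the two halves of the equivalence $[a]\le[b]\iff[a(t)]\le[b(t)]$ for all $t\in[0,1]$. One half is immediate: each $\pi_t\colon A\to A(t)$ is a $*$-homomorphism and so preserves Cuntz subequivalence, whence $a\precsim b$ gives $a(t)\precsim b(t)$ for every $t$. The content is the converse. Using Remark \ref{rem:stable} I may assume $A$, hence each fibre, is stable, and take $a,b$ to be positive contractions. Fixing $\epsilon>0$, by Proposition \ref{lem:extror} it suffices to produce, for each $\delta>0$, an element $z\in\overline{b^{1/2}A}$ with $\|z^*z-(a-\epsilon)_+\|<\delta$: indeed $z\in\overline{b^{1/2}A}$ forces $zz^*\in\Her(b)$, so $((a-\epsilon)_+-\delta)_+\precsim z^*z\sim zz^*\precsim b$; letting $\delta\to0$ and then $\epsilon\to0$ gives $a\precsim b$.

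First I would build local solutions. Fixing $t_0$, the hypothesis $(a(t_0)-\epsilon)_+\precsim b(t_0)$ allows me to lift a suitable element of $A(t_0)$ to $x_{t_0}\in A$ so that $z_{t_0}:=b^{1/2}x_{t_0}^*\in\overline{b^{1/2}A}$ satisfies $\|(z_{t_0}^*z_{t_0})(t_0)-(a(t_0)-\epsilon)_+\|$ as small as I please. As $t\mapsto\|c(t)\|$ is upper semicontinuous for every $c\in A$, this bound persists on an open interval $U_{t_0}\ni t_0$. Compactness of $[0,1]$ then yields a finite subcover, which I would arrange as a chain $0=s_0<\cdots<s_N=1$ of consecutively overlapping intervals $I_k$, each carrying a local solution $z_k\in\overline{b^{1/2}A}$ with $z_k^*z_k\approx(a-\epsilon)_+$ on $I_k$.

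The crux, which I expect to be the main obstacle, is to splice consecutive $z_k$ continuously. Processing the chain from left to right, I pick a transition point $\gamma$ in the overlap $I_k\cap I_{k+1}$ lying in the prescribed dense set, so that $A(\gamma)$ has stable rank one and $\mathrm{K}_1(I)=0$ for all ideals $I$. Then $B:=\Her(b(\gamma))$ inherits stable rank one, so $B\subseteq\overline{\mathrm{GL}(B^\sim)}$ and Lemma \ref{lem: unitary} applies (with the positive contraction $(a(\gamma)-\epsilon)_+$ and the elements $z_k(\gamma),z_{k+1}(\gamma)$, for which $z_j(\gamma)^*z_j(\gamma)$ approximates it and $z_j(\gamma)z_j(\gamma)^*\in B$), furnishing a unitary $u\in\Her(b(\gamma))^\sim$ with $z_k(\gamma)\approx u\,z_{k+1}(\gamma)$. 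Since $\Her(b(\gamma))$ is full in the ideal $I$ it generates, $\mathrm{K}_1(\Her(b(\gamma)))=\mathrm{K}_1(I)=0$; together with stable rank one this gives $\mathrm{U}(\Her(b(\gamma))^\sim)=\mathrm{U}_0(\Her(b(\gamma))^\sim)$, so $u$ is a finite product of exponentials of self-adjoints of $\Her(b(\gamma))^\sim$.

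Lifting those self-adjoints along the surjection $\Her(b|_{I_{k+1}})\to\Her(b(\gamma))$ yields a continuous unitary $\hat u\in\Her(b|_{I_{k+1}})^\sim$ with $\hat u(\gamma)=u$; as $\hat u\in\Her(b)^\sim$, the element $\hat u\,z_{k+1}$ again lies in $\overline{b^{1/2}A}$, still satisfies $(\hat u z_{k+1})^*(\hat u z_{k+1})=z_{k+1}^*z_{k+1}\approx(a-\epsilon)_+$, and now agrees with $z_k$ at $\gamma$ up to the error of Lemma \ref{lem: unitary}. Over a short subinterval about $\gamma$ the two agree throughout up to that error, so a convex interpolation between $z_k$ and $\hat u z_{k+1}$ splices them into a continuous element of $\overline{b^{1/2}A}$ whose $*$-square stays within controlled distance of $(a-\epsilon)_+$. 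This is exactly where $\dim[0,1]=1$ is used: the base is a linear chain of intervals meeting in single points, so each overlap is resolved by one unitary with no higher compatibility to impose, and the vanishing of $\mathrm{K}_1$ is what connects each transition unitary to $1$. The residual delicacy is quantitative---controlling the accumulated error (about $9\epsilon$ per splice, plus lifting error) over the $N$ gluings, absorbed by fixing the initial local tolerance of order $\delta/N$. The final glued $z\in\overline{b^{1/2}A}$ then satisfies $\|z^*z-(a-\epsilon)_+\|<\delta$, as required.
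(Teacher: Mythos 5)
Your proposal follows essentially the same route as the paper's proof: local approximate solutions $x_i=b^{1/2}d_i$ on a chain of overlapping intervals, transition points chosen in the dense set so that $\Her(b(t_i))$ has stable rank one, Lemma \ref{lem: unitary} to produce the matching unitaries, the hypothesis $\mathrm{K}_1(I)=0$ (via Brown's stable isomorphism theorem and Rieffel's connectedness result) to place them in $\mathrm{U}_0$ and lift them, and a partition of unity to splice. The only cosmetic differences are bookkeeping (you conjugate each $z_{k+1}$ locally where the paper takes cumulative products $y_i=v_1\cdots v_{i-1}x_i$ of globally lifted unitaries, which also sidesteps your worry about error accumulating in $N$ — it does not, since unitaries leave $z^*z$ unchanged), so the argument is correct and matches the paper's.
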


\begin{proof}
By Remark \ref{rem:stable} and since our assumptions on $A$ and its fibres are stable, we may assume that $A$ and its fibres are stable at the outset.

Let $0<\epsilon<1$ be fixed, and let us suppose that $a,b\in A$ are positive contractions such that $a(t)\precsim b(t)$, for all $t\in [0,1]$. We need to show that $a\precsim b$.

Let $B:=A+\CC[0,1]\cdot 1_{\M(A)}$. Then $B$ is a C*-algebra that contains $A$ as a closed two-sided ideal by (i) of Lemma \ref{lem:AB}. In addition, by (ii) of Lemma \ref{lem:AB} we have that $a(t)\precsim b(t)$ in $B(t)$, for all $t\in [0,1]$. By the definition of the Cuntz order and since $B(t)$ is a quotient of $B$ for each $t\in [0,1]$ there exists $d\in B$ such that
\begin{align*}
\|a(t)-d(t)^*b(t)d(t)\|<\epsilon.
\end{align*} 
By the upper semicontinuity of the norm the inequality above also holds in a neighbourhood of $t$. Hence, since $[0,1]$ is a compact set, there exist a finite covering of $[0,1]$ consisting of open intervals $U_i$, $i=1,2,\cdots,n$, and elements $(d_i)_{i=1}^n\subset B$ such that
\begin{align*}
\|a(t)-d_i(t)^*b(t)d_i(t)\|<\epsilon.
\end{align*} 
for all $t\in U_i$ and all $1\le i\le n$. Moreover, we may choose the open intervals $(U_i)_{i=1}^n$ such that $t<t'$ if $t\in U_i$ and $t'\in U_{i+2}$, for $i=1,2,\cdots, n-2$.

For each $1\le i\le n$, set $b^{\frac{1}{2}}d_i=x_i$. Then, $x_ix_i^*\in \Her(b)$, and
\begin{align*}
\|a(t)-x_i(t)^*x_i(t)\|<\epsilon,
\end{align*}
for all $t\in U_i$. By assumption, there exists $t_i\in U_i\cap U_{i+1}$ such that the stable rank of $\pi_{t_i}(A)\cong A(t_i)$ is one, where $\pi_{t_i}\colon B\to B(t_i)$ denotes the quotient map. Therefore, since $\Her(b(t_i))=\overline{b(t_i)B(t_i)b(t_i)}$ is also a hereditary subalgebra of $\pi_{t_i}(A)$, the stable rank of $\Her(b(t_i))$ is one.

We now have
\begin{align*}
&x_i(t_i)x_i(t_i)^*, x_{i+1}(t_i)x_{i+1}(t_i)^*\in \Her(b(t_i)),\\
&\|a(t_i)-x_i(t_i)^*x_i(t_i)\|<\epsilon,\quad \|a(t_i)-x_{i+1}(t_i)^*x_{i+1}(t_i)\|<\epsilon.
\end{align*}
Hence, by Lemma \ref{lem: unitary} there exists a unitary $u_i$ in the unitization of $\Her(b(t_i))$ such that
\begin{align*}
\|x_i(t_i)-u_ix_{i+1}(t_i)\|<9\epsilon.
\end{align*}
Note that since $\pi_{t_i}(A)\cong A(t_i)$ is stable $1_{\M(A)}(t_i)\notin \pi_{t_i}(A)$, whence $1_{\M(A)}(t_i)\notin\Her(b(t_i))$. This implies that the unitization of $\Her(b(t_i))$ is isomorphic to the C*-algebra $\Her(b(t_i))+\C\cdot 1_{\M(A)}(t_i)$. Therefore, we may assume that the unitary $u_i$ belongs to this algebra.

Using \cite[Theorem 2.8]{brown} and that $A(t_i)$ is separable we conclude that $\Her(b(t_i))$ is stably isomorphic to an ideal of $\pi_{t_i}(A)$, which is in turn isomorphic to an ideal of $A(t_i)$. Hence, it follows from our assumptions that $\mathrm{K}_1(\Her(b(t_i)))=0$. Since $\mathrm{sr}(\Her(b(t_i)))=1$, we know from \cite[Theorem 2.10]{Rieffel} that $\mathrm{U}(\Her(b(t_i)))$ is connected. Therefore, $u_i$ can be connected to $1_{\M(A)}(t_i)$ in $\Her(b(t_i))+\C\cdot 1_{\M(A)}(t_i)$. Since this C*-algebra is the image by $\pi_{t_i}$ of the C*-algebra $\Her(b)+\C\cdot 1_{\M(A)}$, and unitaries in the connected component of the identity lift (see, e.g. \cite[Corollary 4.3.3]{wo}), there exists a unitary $v_i$ in $\Her(b)+\C\cdot 1_{\M(A)}$ such that $v_i(t_i)=u_i$.
 
Let $(y_i)_{i=1}^n$ be the elements defined by $y_1=x_1$, and 
\begin{align*}
y_i=v_1v_2\cdots v_{i-1}x_i,
\end{align*}
for $i=2,\cdots,n$. 
Since $x_i\in b^{1/2}A$, it follows that $y_i\in A$ for all $i$. Also, $y_iy_i^*\in \Her(b)$, and
\begin{align}\label{ayy}
\|a(t)-y_i(t)^*y_i(t)\|<\epsilon,
\end{align}
for all $t\in U_i$. Moreover, 
\begin{align*}
\| (y_i-y_{i+1})(t_i)\|=\| v_1\cdots v_{i-1}(x_i-v_ix_{i+1})(t_i)\|=\| (x_i-v_ix_{i+1})(t_i)\|=\| x_i(t_i)-u_ix_{i+1}(t_i)\|\,.
\end{align*}
Thus,
\begin{align*}
\|y_i(t_i)-y_{i+1}(t_i)\|<9\epsilon.
\end{align*}
Since the norm is upper semicontinuous there exists $\delta>0$ such that 
\begin{align*}
\|y_i(t)-y_{i+1}(t)\|<9\epsilon.
\end{align*}
for all $t\in (t_i-\delta, t_i+\delta)$ and for all $1\le i\le n-1$.
Let us consider the open intervals $(V_i)_{i=1}^n$ defined by
\begin{align*}
V_i=
\begin{cases}
[0,t_1+\delta)&\text{if }i=1;\\
(t_{i-1}-\delta, t_i+\delta)&\text{if }2\le i\le n-1;\\
(t_{n-1}-\delta,1]&\text{if }i=n.
\end{cases}\
\end{align*}
Then, $\bigcup_{i=1}^nV_i=[0,1]$, and $V_i\subseteq U_i$ for all $1\le i\le n$.
Let $(\lambda_i)_{i=1}^n$ be a partition of unity associated to the open covering $(V_i)_{i=1}^n$. Let us consider the element
\begin{align*}
y=\sum_{i=1}^n\lambda_iy_i.
\end{align*}
Then, $y\in A$, $yy^*\in \Her(b)$, and
\begin{align}\label{y}
y(t)=
\begin{cases}
y_1(t)&\text{if } 0\le t\le t_1-\delta;\\
y_i(t)&\text{if } t_{i-1}+\delta\le t\le t_i-\delta,\text{ and }2\le i\le n-1;\\
y_n(t)&\text{if } t_{n-1}+\delta\le t\le 1;\\
\lambda_i(t)y_i(t)+\lambda_{i+1}(t)y_{i+1}(t)&\text{if }t_{i}-\delta<t<t_{i}+\delta, \text{ and }1\le i\le n-1.
\end{cases}
\end{align}
Let us show that $\|a-y^*y\|<28\epsilon$. By \eqref{ayy} and \eqref{y} it is enough to show that $\|a(t)-y(t)^*y(t)\|<28\epsilon$ for $t\in (t_{i}-\delta, t_{i}+\delta)$. We have
\begin{align*}
\|a(t)-y(t)^*y(t)\|&=\|a(t)-(\lambda_i(t)y_i(t)+\lambda_{i+1}(t)y_{i+1}(t))^*(\lambda_i(t)y_i(t)+\lambda_{i+1}(t)y_{i+1}(t))\|\\
&\le\|a(t)-y_i(t)^*y_i(t)\|+\\
&+\|y_i(t)^*y_i(t)-(\lambda_i(t)y_i(t)+\lambda_{i+1}(t)y_{i+1}(t))^*(\lambda_i(t)y_i(t)+\lambda_{i+1}(t)y_{i+1}(t))\|\\
&<\epsilon+27\epsilon\\
&=28\epsilon.
\end{align*}
We have found an element $y\in A$ such that $\|a-y^*y\|<28\epsilon$, and $yy^*\in \Her(b)$. This implies by Lemma 2.2 of \cite{Kirchberg-Rordam} that $(a-28\epsilon)_+\precsim b$ in $A$. Therefore, 
\[
[a]=\sup_{\epsilon>0}[(a-28\epsilon)_+]\le [b].
\]
This concludes the proof of the theorem.
\end{proof}

In the particular case of $\CC(X,A)$, for a given $C^*$-algebra $A$, all fibres are naturally isomorphic to $A$, and hence 
the image of the map $\alpha$ in the theorem above can be viewed as functions from $X$ to $\Cu(A)$, that are lower semicontinuous in
 a certain topology.

\begin{proposition}{\rm (\cite[Proposition 3.1]{Tikuisis})}
 Let $A$ be a C$^*$-algebra, $X$ a compact Hausdorff space and $f\in \CC(X,A)$. Then, for any
$a\in A$, the set $\{x\in X \mid [ b] \ll [ f(x)] \}$ is open.  
\end{proposition}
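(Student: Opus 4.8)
The plan is to combine the standard description of compact containment in $\Cu(A)$ with the norm-continuity of the continuous functional calculus along $f$. First, by Theorem \ref{th:CEI} (and the remark following it) every $[b]\in\Cu(A)$ is the supremum of the rapidly increasing sequence $([(b-1/n)_+])_n$; hence, for $a,b\in A_+$, one has $[a]\ll[b]$ if and only if $a\precsim(b-\epsilon)_+$ for some $\epsilon>0$. Indeed, if $[a]\ll[b]$ then $[a]\leq[(b-1/n)_+]$ for some $n$, while conversely $a\precsim(b-\epsilon)_+$ gives $[a]\leq[(b-\epsilon)_+]\ll[b]$. (The set in the statement should of course read $\{x\in X\mid [a]\ll[f(x)]\}$.) I will also use the standard Cuntz comparison lemma, a form of \cite[Proposition 2.4]{Rorfunct} (see also \cite[Lemma 2.2]{Kirchberg-Rordam}), that $\|c-d\|<\eta$ implies $(c-\eta)_+\precsim d$ for positive $c,d$.

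The second ingredient is that the map $g\colon X\to A$, $g(x):=(f(x)-\epsilon/2)_+$, is norm-continuous. Since $f$ is continuous and $X$ is compact, $M:=\sup_x\|f(x)\|$ is finite and every $f(x)$ has spectrum in $[0,M]$; approximating $t\mapsto(t-\epsilon/2)_+$ uniformly on $[0,M]$ by polynomials shows that $a\mapsto(a-\epsilon/2)_+$ is norm-continuous on the positive part of the ball of radius $M$, and composing with $f$ gives the continuity of $g$. Crucially, I avoid claiming that $a\mapsto(a-\epsilon/2)_+$ is Lipschitz, which is false; plain continuity is all that is needed.

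Now fix $x_0$ in $S:=\{x\in X\mid[a]\ll[f(x)]\}$ and choose $\epsilon>0$ with $a\precsim(f(x_0)-\epsilon)_+$. By continuity of $g$, the set $U:=\{x\in X\mid\|g(x)-g(x_0)\|<\epsilon/2\}$ is an open neighbourhood of $x_0$. For $x\in U$, the comparison lemma applied with $c=g(x_0)$, $d=g(x)$ and $\eta=\epsilon/2$ yields $(g(x_0)-\epsilon/2)_+\precsim g(x)$. Since $(g(x_0)-\epsilon/2)_+=((f(x_0)-\epsilon/2)_+-\epsilon/2)_+=(f(x_0)-\epsilon)_+$, we obtain
\[
a\precsim(f(x_0)-\epsilon)_+\precsim(f(x)-\epsilon/2)_+,
\]
so that $[a]\leq[(f(x)-\epsilon/2)_+]\ll[f(x)]$, i.e. $x\in S$. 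Thus $U\subseteq S$, and as $x_0\in S$ was arbitrary, $S$ is open.

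I do not expect a genuine obstacle; the argument is short once the two lemmas are recorded. The one place demanding attention — and the natural pitfall — is the regularity of $x\mapsto(f(x)-\epsilon/2)_+$: the tempting estimate $\|(a-\eta)_+-(b-\eta)_+\|\leq\|a-b\|$ fails in general, since the cut-down is not operator Lipschitz, so one must instead invoke norm-continuity of the functional calculus, which is enough to produce the neighbourhood $U$. The only other point is the $\epsilon$-bookkeeping, namely choosing the cut-off $\epsilon/2$ so that $(g(x_0)-\epsilon/2)_+$ collapses back to $(f(x_0)-\epsilon)_+$ and the slack in the comparison lemma matches the radius defining $U$.
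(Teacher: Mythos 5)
Your proof is correct. Note that the paper does not actually prove this statement --- it is quoted verbatim from Tikuisis's paper (\cite[Proposition 3.1]{Tikuisis}) with no argument supplied --- so there is nothing internal to compare against; your argument is the standard one (characterise $[a]\ll[f(x_0)]$ by $a\precsim (f(x_0)-\epsilon)_+$, then propagate to a neighbourhood via \cite[Lemma 2.2]{Kirchberg-Rordam} and the continuity of $x\mapsto (f(x)-\epsilon/2)_+$), and your care about the cut-down not being operator Lipschitz, together with the correct reading of the misprinted $b$ as $a$, is exactly right.
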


Given a separable C$^*$-algebra $A$, the sets $\{[ a] \in \Cu(A)\mid [ a]\ll [ b]\}$ define a basis 
of a topology in $\Cu(A)$ named the \emph{Scott Topology} (see, e.g. \cite{scottetal}).

\begin{definition}
Let $X$ be a topological space, $M$  a semigroup in Cu, and $f\colon X\to M$ a function. We say that $f$ is \emph{lower semicontinuous} if, 
for all $a\in M$, the set $f^{-1}(a^{\ll}):=\{t\in X \mid a\ll f(t)\}$ is open in $X$. We shall denote the set of all lower semicontinuous functions from $X$ to $M$ by $\Lsc(X,M)$.
\end{definition}

In section \ref{lsc} we prove that in the general case of a second countable finite dimensional topological space $X$  and a countably based semigroup $M$ in Cu,
$\Lsc(X,M)$, equipped with the pointwise order and addition, is a semigroup in the category Cu. 
The key step in the argument is to show that any function $f\in\Lsc(X,M)$ is a supremum of a rapidly increasing sequence of functions that have a special form. We describe these functions in the particular case of the interval $X=[0,1]$,
and refer the reader to section \ref{lsc} for the general case. 

\begin{definition}\label{def:pw01}
Let $A$ be a C$^*$-algebra. Given the following data
\begin{enumerate}[{\rm (i)}]
\item A partition $0=t_0<t_1<\dots<t_{n-1}<t_n=1$ of $[0,1]$ with $n=2r+1$ for some $r\geq 1$,
\item Elements $x_0,\dots,x_{n-1}$ in $M$, with $x_{2i},x_{2i+2}\leq x_{2i+1}$ for $0\leq i\leq r-1$, 
\end{enumerate}
a \emph{piecewise characteristic function}  is a map $g\colon [0,1]\to \Cu(A)$ such that 
\[
g(s)=\begin{cases} x_{2i} \text{ if }  s\in [t_{2i},t_{2i+1}] \\   x_{2i+1} \text{ if }  s\in (t_{2i+1},t_{2i+2})\end{cases}
\]
If moreover $g\ll f$ for some $f\in \Lsc([0,1],\Cu(A))$, we then say that $g$ is a \emph{piecewise characteristic function} for $f$. We denote the set of all such functions by $\chi(f)$.
\end{definition}
It is easily verified that a piecewise characteristic function as above is lower semicontinuous.

\begin{lemma}\label{prop:inv}
 Let $A$ be a separable  stable C$^*$-algebra, $f\in \Lsc([0,1],\Cu(A))$, and $f_1\ll f_2$ be piecewise characteristic functions for $f$. Then, there exists 
a continuous function $g_2\in \CC([0,1],A)$ such that $f_1\ll \alpha([g_2])\leq f_2$ and $\alpha([g_2])\in\chi(f)$.
\end{lemma}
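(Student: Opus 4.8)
The plan is to realise the intermediate function by an explicit continuous function, built piece by piece from representatives of its values, with the stability of $A$ used throughout.

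First I would pass to a common partition. After refining the partitions of $f_1$ and $f_2$ (duplicating values so as to preserve the form of Definition \ref{def:pw01}), I may assume both are piecewise characteristic for the same partition $0=t_0<\dots<t_n=1$, with values $x_0,\dots,x_{n-1}$ for $f_1$ and $y_0,\dots,y_{n-1}$ for $f_2$. Using $f_1\ll f_2$ together with the lower semicontinuity of the functions and the compactness of $[0,1]$, I would extract the estimates $x_j\ll y_j$. Fixing representatives $b_j\in A_+$ of $y_j$ and setting $z_j:=[(b_j-\epsilon_j)_+]$, Proposition \ref{lem:extror} gives $x_j\ll z_j\le y_j$ for $\epsilon_j$ small; choosing the $\epsilon_j$ of odd index small enough, after the even ones are fixed, forces $z_{2i},z_{2i+2}\le z_{2i+1}$, so the step function $h$ with values $z_j$ is again piecewise characteristic. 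Since $h\le f_2\ll f$ we get $h\in\chi(f)$, while $x_j\ll z_j$ will yield $f_1\ll h$; the problem is thus reduced to producing $g_2\in\CC([0,1],A)$ with $\alpha([g_2])=h$.

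The construction rests on two elementary moves, both available because $A$ is stable. The \emph{growing move}: if $a\in\mathrm{Her}(c)_+$ then $a+sc\in\mathrm{Her}(c)$ and $a+sc\ge sc$, so $[a+sc]=[c]$ for every $s>0$, while at $s=0$ the element is $a$; thus $s\mapsto a+sc$ is a continuous path whose class equals $[c]$ off the origin and drops to $[a]$ at the origin. The \emph{transporting move}: two positive elements of the same Cuntz class can, in a stable algebra, be joined by a continuous path lying entirely in that class. Using these, together with the fact that $d\precsim c$ implies $d$ is Cuntz equivalent to an element of $\mathrm{Her}(c)$ (a consequence of Proposition \ref{lem:extror}(iv)), I would proceed as follows. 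On each transition interval $(t_{2i+1},t_{2i+2})$ I fix $C_i\in A_+$ of class $z_{2i+1}$ and representatives $a_i^-,a_i^+\in\mathrm{Her}(C_i)$ of $z_{2i}$ and $z_{2i+2}$, and set
\[
g_2(t)=\phi_i(t)\,a_i^-+\psi_i(t)\,a_i^++\sigma_i(t)\,C_i,
\]
with continuous scalar coefficients so that $g_2(t_{2i+1})=a_i^-$, $g_2(t_{2i+2})=a_i^+$, and $\sigma_i>0$ on the open interval. As all three elements lie in $\mathrm{Her}(C_i)$ and $g_2(t)$ dominates $\sigma_i(t)\,C_i$, the class equals $z_{2i+1}$ throughout the open interval and drops to $z_{2i}$, $z_{2i+2}$ at the endpoints. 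On each interior closed interval $[t_{2i},t_{2i+1}]$ the value must be constantly $z_{2i}$, so I would connect the representative inherited from the transition on the left to the one required by the transition on the right by a path within the class $z_{2i}$, furnished by the transporting move; on the two boundary closed intervals I take $g_2$ constant. Concatenating yields a continuous $g_2$ with $\alpha([g_2])=h$.

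The main obstacle is precisely this gluing at the partition points. The two transitions abutting a closed interval naturally deposit representatives of the same class $z_{2i}$ inside the different hereditary subalgebras $\mathrm{Her}(C_{i-1})$ and $\mathrm{Her}(C_i)$, and these need neither coincide nor be unitarily conjugate; one cannot in general keep the value constant by a single fixed element, since requiring both neighbouring representatives to sit inside one element of class $z_{2i+1}$ would force its class up to $z_{2i}+z_{2i+2}$, overshooting $y_{2i+1}$. The resolution is to join Cuntz-equivalent representatives by a path staying in their class, which is where stability is indispensable (through the connectedness properties of the unitary group of $\mathrm{M}(A)$ and rotation homotopies); establishing the transporting move in the generality needed, and verifying that it meshes continuously with the growing move on the open intervals, is the delicate heart of the argument. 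Finally, $f_1\ll h=\alpha([g_2])\le f_2$ and $h\in\chi(f)$ follow from the estimates recorded above by a routine finite-subcover argument.
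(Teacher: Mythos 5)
Your construction is in essence the paper's own: cut $f_2$ down to an intermediate step function, fix a representative $C_i$ of each odd (open-interval) value, push representatives of the adjacent even values into $\Her(C_i)$, add a strictly positive multiple of $C_i$ on the open interval so that the class there is exactly $[C_i]$ and drops to the even values at the endpoints, and glue across the closed intervals. Your ``growing move'' computation is exactly the verification the paper performs for its formula $g_2(t)=\sum_i\bigl(\lambda_i(t)w_i(t)(a_{2i}-\epsilon)_+w_i(t)^*+\lambda_i(t)\lambda_{i+1}(t)a_{2i+1}\bigr)$. However, two of your steps have genuine gaps as written.

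The first concerns the ``transporting move''. You assert that any two positive elements of the same Cuntz class in a stable algebra can be joined by a continuous path inside that class, and you explicitly pose the gluing problem for representatives that ``need neither coincide nor be unitarily conjugate''. In that generality the claim is not in the paper's toolkit and is not obviously true: Cuntz equivalence constrains neither the spectrum nor any homotopy data of the representatives, and you acknowledge this is the ``delicate heart'' without supplying an argument. The point is that you never need the general statement. Proposition \ref{lem:extror}(iv) hands you the two representatives flanking a closed interval as unitary conjugates $u(d-\epsilon)_+u^*$ and $v(d-\epsilon)_+v^*$ of one and the same cut-down $(d-\epsilon)_+$ of the even value, with $u,v\in\mathrm{U}(A^\sim)\subseteq\mathrm{U}(\M(A))$; since $A$ is stable, $\mathrm{U}(\M(A))$ is norm-connected, so a path $w(t)$ from $u$ to $v$ yields the path $w(t)(d-\epsilon)_+w(t)^*$, which stays exactly in the class. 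This is precisely the paper's path $w_i$, and it is the step your write-up leaves unresolved by choosing the representatives too freely.

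The second gap is that the estimates you record, $x_j\ll z_j\le y_j$, do not imply $f_1\ll h$. By Proposition \ref{prop:charact}, at a partition point $t_{2i+1}$ every neighbourhood meets the open interval where $f_1=x_{2i+1}$, while $h(t_{2i+1})=z_{2i}$; so you also need the corner estimates $x_{2i+1}\ll z_{2i}$ and $x_{2i+1}\ll z_{2i+2}$. These do hold --- $f_1\ll f_2$ forces $x_{2i+1}\ll y_{2i}$ by the same neighbourhood argument, after which one shrinks $\epsilon_{2i}$ --- but they are not consequences of $x_j\ll z_j$ alone, so the final ``routine finite-subcover argument'' is not routine from what you recorded. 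The paper sidesteps both this bookkeeping and your common-refinement step by working only with the partition of $f_2$, noting $f_2=\sup_\epsilon f_{2,\epsilon}$ in the Cu-semigroup $\Lsc([0,1],\Cu(A))$, and extracting $f_1\ll f_{2,\epsilon}$ directly from compact containment.
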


\begin{proof}
Suppose that $f_2$ is described as in Definition \ref{def:pw01} with $x_i=[ a_i]$ for some $a_i\in A$. 
%To simplify arguments in extreme points, let $t_{-1}=-1$ and $t_{2r+2}=2$, $a_{-1}=a_{0}$ and $a_{2r+1}=a_{2r}$.
Let $f_{2,\epsilon}$ be the function with the same form as $f_2$ but with $[a_{2i}]$ replaced by $[(a_{2i}-\epsilon)_+]$ for $0\le i\le r$. Note that $f_{2,\epsilon}\in \chi(f)$ and that $f_2=\sup f_{2,\epsilon}$ in $\Lsc([0,1],\Cu(A))$. Hence, since $f_1\ll f_2$, there exists $\epsilon>0$ such that 
$f_1\ll f_{2,\epsilon} \leq f_2$.

Since $[a_{2i}] \leq [a_{2i-1}],\, [a_{2i+1}]$, by  condition (iv) in Proposition \ref{lem:extror} 
there exist unitaries $u_i$ and $v_i$ in $A^\sim$ such that 
\begin{align}\label{herher}
 u_i(a_{2i}-\epsilon)_+u_i^*\in\Her (a_{2i-1}),\quad   v_i(a_{2i}-\epsilon)_+v_i^*\in\Her (a_{2i+1}).
\end{align}
%and if we denote by $x_i=u_i(a_{2i}-\epsilon)_+^{\frac{1}{2}}$, $y_i=v_i(a_{2i}-\epsilon)_+^{\frac{1}{2}}$, we have 
%$(a_{2i}-\epsilon)=x_i^*x_i=y_i^*y_i$  and $y_i=v_iu_i^*x_i$.

Since $A$ is stable, the unitary group of the multiplier algebra $\M(A)$ is connected in the norm topology (see, e.g. \cite[Corollary 16.7]{wo}). Therefore, for each $i=0,1,\ldots,r$ there exists a continuous path $w_i\colon [0,1]\to \mathrm{U}(\M(A))$ such that $w_i(t)=u_i$ if $t\in [0,t_{2i}]$, and $w_i(t)=v_i$ for $t\in [t_{2i+1},1]$. 

%Therefore, if $\tilde w_i$ is a continuous path of unitaries in $\M(A)$ such that $\tilde w_i(0)=1$
%and $\tilde w_i(1)=v_iu_i^*$, we define for each $i=0,\dots, r$ the following path in $\text{C}([0,1],\M(A))$

%\[ w_i(t)=\left\{\begin{array}{ll} 1 & \text{ if } t\in [0,t_{2i}] \\ \tilde w_i\left(\frac{t-t_{2i}}{t_{2i+1}-t_{2i}}\right) & \text{ if } t\in (t_{2i},t_{2i+1}) \\
%                  v_iu_i^* & \text{ if } t\in [t_{2i+1},1]
%                 \end{array}\right.  
%\]
%\text{ and }   
% \lambda_i(t)=\left\{\begin{array}{ll} 0 & \text{ if } t\leq t_{t_{2i-1}} \text{ or } t\geq t_{t_{2i+2}}\\ 
% \text{linear}  & \text{ if } t\in(t_{2i-1},t_{2i})\cup (t_{2i+1},t_{2i+2})\\ 
% 1  & \text{ if } t\in [t_{2i},t_{2i+1}] \\ 
% % \text{linear}  & \text{ if } t\in (t_{2i+1},t_{2i+2}) \\ 
% % 0 & \text{ if } t\geq t_{t_{2i+2}}
% \end{array}\right.
% \]
% \begin{center}
% \includegraphics{funcions}
% \end{center}
Let $(\lambda_i)_{i=0}^r$ be sequence of continous positive real-value functions on $[0,1]$ that are supported in the open sets 
\[
[0,t_2), (t_{2i-1},t_{2i+2})_{i=1}^ {r-1},(t_{2r-1},1]\,,
\]
respectively (i.e., they are non-zero in each point of the corresponding interval and zero elsewhere). Let us define $g_2\in \CC([0,1],A)$ by 
\[ 
g_2(t)= \sum_{i=0}^{r} (\lambda_i(t)w_i(t)(a_{2i}-\epsilon)_+w_i^*(t)+ \lambda_{i}(t)\lambda_{i+1}(t) a_{2i+1}).
\]
(In the equation above we are taking $\lambda_{r+1}=0$ and $a_{r+1}=0$.)

If $t\in [t_{2j},t_{2j+1}]$ with $0\le j\le r$, then $g_2(t)=\lambda_i(t)w_j(t)(a_{2j}-\epsilon)_+w_j(t)\sim (a_{2j}-\epsilon)_+$. Hence, $\alpha([g_2])(t)=[(a_{2j}-\epsilon)_+]$. 

If $t\in (t_{2j+1},t_{2j+2})$ with $0\le j\le r-1$, then 
\begin{align*} 
g_2(t)  & =  \lambda_j(t)w_j(t)(a_{2j}-\epsilon)_+w_j(t)^*+\lambda_{j+1}(t)w_{j+1}(t)(a_{2j+2}-\epsilon)_+w_{j+1}(t)^*+\lambda_j(t)\lambda_{j+1}(t) a_{2j+1} \\ 
        & = \lambda_j(t)v_j(a_{2j}-\epsilon)_+v_j^*+\lambda_{j+1}(t)u_{j+1}(a_{2j+2}-\epsilon)_+u_{j+1}^*+\lambda_{j}(t)\lambda_{j+1}(t) a_{2j+1}.
\end{align*}
By \eqref{herher} the element $g_2(t)$ belongs to $\Her(a_{2j+1})$, whence $g_2(t)\precsim a_{2j+1}$. Also, we have 
\[
g_2(t)\geq \lambda_{j}(t)\lambda_{j+1}(t) a_{2j+1}\sim a_{2j+1}.
\] 
Therefore, $\alpha([g_2])(t)=[a_{2j+1}]$. 

It follows that $\alpha([g_2])=f_{2,\epsilon}$, which proves the result.
\end{proof}

\begin{theorem}
\label{thm:surj}
Let $A$ be a separable C$^*$-algebra. If the natural map 
\[
\alpha\colon \mathrm{Cu}(\CC([0,1],A))\to \Lsc([0,1],\mathrm{Cu}(A))
\]
is an order embedding, then it is an isomorphism in the category Cu.
\end{theorem}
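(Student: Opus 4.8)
The plan is to reduce the statement to a single substantial point, namely the \emph{surjectivity} of $\alpha$, and to dispatch everything else by a soft argument valid for any surjective order embedding between semigroups in Cu. First I would record the easy observations: $\alpha$ is additive, preserves $0$ and the order, and, being induced fibrewise by the quotient maps $\pi_t$ (Remark \ref{rem:stable}), it carries increasing sequences to increasing sequences. Both $\Cu(\CC([0,1],A))$ (Theorem \ref{th:CEI}) and $\Lsc([0,1],\Cu(A))$ (Theorem \ref{thm:lscinCu}) are objects of Cu, so increasing sequences have suprema on either side, and on the target these suprema are computed pointwise.

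The heart of the matter is surjectivity. Given $f\in\Lsc([0,1],\Cu(A))$, I would invoke the key structural result of Section \ref{lsc} to write $f=\sup_n f_n$ for a rapidly increasing sequence $(f_n)$ of piecewise characteristic functions; since $f_n\ll f_{n+1}\le f$, each $f_n$ lies in $\chi(f)$, so every consecutive pair $f_n\ll f_{n+1}$ satisfies the hypotheses of Lemma \ref{prop:inv}. Applying that lemma to each pair produces $h_n\in\CC([0,1],A)$ with
\[
f_n\ll\alpha([h_n])\le f_{n+1}.
\]
From $\alpha([h_n])\le f_{n+1}\ll\alpha([h_{n+1}])$ I get that $(\alpha([h_n]))$ is rapidly increasing with supremum $\sup_n f_n=f$, and from $\alpha([h_n])\le\alpha([h_{n+1}])$ together with the order-embedding hypothesis I get that $([h_n])$ is increasing in $\Cu(\CC([0,1],A))$. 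Setting $[h]:=\sup_n[h_n]$, I would then compute, for each $t$, using that $\Cu(\pi_t)$ preserves suprema of increasing sequences,
\[
\alpha([h])(t)=\Cu(\pi_t)\big(\textstyle\sup_n[h_n]\big)=\sup_n[h_n(t)]=\sup_n\alpha([h_n])(t)=f(t),
\]
the last equality because suprema of increasing sequences in $\Lsc([0,1],\Cu(A))$ are pointwise. Hence $\alpha([h])=f$, and $\alpha$ is onto.

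Finally I would upgrade the bijective order embedding to an isomorphism in Cu by a purely order-theoretic argument. Surjectivity allows one to test upper bounds, so $\alpha$ (and, symmetrically, $\alpha^{-1}$) preserves suprema of increasing sequences; combining order-preservation in both directions with preservation of these suprema shows that $\alpha$ both preserves and reflects the relation $\ll$. Together with additivity of $\alpha$ and $\alpha^{-1}$, this shows that $\alpha$ and its inverse are morphisms in Cu, i.e. $\alpha$ is an isomorphism.

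The main obstacle is surjectivity, and within it the only non-formal input is Lemma \ref{prop:inv}: it is exactly what lets me replace the abstract piecewise characteristic functions $f_n$ approximating $f$ by Cuntz classes $\alpha([h_n])$ of honest continuous functions, squeezed between consecutive $f_n$. The two supporting facts — that $f$ is a rapidly increasing supremum of piecewise characteristic functions, and that suprema in $\Lsc([0,1],\Cu(A))$ are pointwise — are precisely the Section \ref{lsc} results (Theorem \ref{thm:lscinCu}), which I would simply cite.
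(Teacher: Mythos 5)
Your proposal is correct and follows essentially the same route as the paper: reduce to surjectivity, approximate $f\in\Lsc([0,1],\Cu(A))$ by a rapidly increasing sequence of piecewise characteristic functions (Proposition \ref{lem:supseq} with Lemma \ref{lem:countbase}), realise these via Lemma \ref{prop:inv} as classes $\alpha([h_n])$ of continuous functions, and use the order-embedding hypothesis to pass to the supremum. Your interleaving $f_n\ll\alpha([h_n])\le f_{n+1}$ is in fact a slightly more careful rendering of the step the paper compresses into ``we may suppose that there exists $g_n$ with $\alpha([g_n])=f_n$.''
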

\begin{proof}
Without loss of generality we may assume that $A$ is stable. In addition,
we only need to prove that $\alpha$ is surjective since by our assumptions this will imply that it is an order-isomorphism, whence an isomorphism in the category Cu. 

Let
$f\in\Lsc([0,1],\mathrm{Cu}(A))$. We know from Proposition \ref{lem:supseq} combined
with Lemma \ref{lem:countbase}
that there is a rapidly increasing sequence of functions $(f_n)$ in $\chi(f)$
such that $f=\sup f_n$. By Lemma \ref{prop:inv},
we may suppose that there exists $g_n\in \CC([0,1],A)_+$ with $\alpha([
g_n])=f_n$. As $\alpha$ is an order-embedding by assumption, 
the sequence $([ g_n])$ is increasing. Let $[g]=\sup[
g_n]$. Then 
\[
\alpha([g])=\sup_n\alpha([ g_n])=\sup_n f_n=f\,,
\]
as desired.
\end{proof}

% 
% 
% \begin{corollary}\label{cor:int}
% Let $A$ be a C*-algebra. Suppose that $B\subset \overline{\mathrm{GL}(B^\sim)}$ for every hereditary subalgebra $B$ of $A$. Then, the map
% \[
% \alpha\colon\Cu(\mathrm{C}([0,1],A))\to \mathrm{Lsc}([0,1], \Cu(A)),
% \]
% given by $\alpha[a](t)=[a(t)]$ is an isomorphism in the category $\Cu$.
% \end{corollary}

From Theorems \ref{th: C[0,1]-algebras} and \ref{thm:surj}, we immediately obtain the corollary below. Much more is true, as will be shown in the next section.

\begin{corollary}\label{cor:intervalo}
Let $A$ be a separable C$^\ast$-algebra with stable rank one such that $\mathrm{K}_1(I)=0$ for every closed two-sided ideal $I$ of $A$.  Then, the map
\[
\alpha\colon\Cu(\mathrm{C}([0,1],A))\to \mathrm{Lsc}([0,1], \Cu(A)),
\]
given by $\alpha([a])(t)=[a(t)]$ is an isomorphism in the category $\Cu$.
\end{corollary}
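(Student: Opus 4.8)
The plan is to deduce the statement directly from Theorems \ref{th: C[0,1]-algebras} and \ref{thm:surj}, so that the only genuine work is to check that the hypotheses apply and to identify the codomain of $\alpha$ correctly. First I would regard $\CC([0,1],A)$ as a $\CC[0,1]$-algebra via the structure map sending $f\in\CC[0,1]$ to multiplication by $f\cdot 1_{\M(A)}$. For each $t\in[0,1]$ the fibre $\CC([0,1],A)(t)$ is the quotient of $\CC([0,1],A)$ by $\CC_0([0,1]\setminus\{t\})\cdot\CC([0,1],A)$, which is naturally isomorphic to $A$ via evaluation $a\mapsto a(t)$. Thus \emph{every} fibre is isomorphic to $A$, and consequently, for every $t\in[0,1]$ (in particular on a dense subset), the fibre is separable, has stable rank one, and satisfies $\mathrm{K}_1(I)=0$ for every ideal $I$, by the standing hypotheses on $A$.

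With the hypotheses verified, Theorem \ref{th: C[0,1]-algebras} applies and shows that the map $\Cu(\CC([0,1],A))\to\prod_{t\in[0,1]}\Cu(\CC([0,1],A)(t))$, $[a]\mapsto([a(t)])_t$, is an order embedding. Under the identifications $\Cu(\CC([0,1],A)(t))\cong\Cu(A)$ the target becomes $\prod_{t\in[0,1]}\Cu(A)$ and the map is $[a]\mapsto([a(t)])_t$. Next I would identify the image as lying in $\Lsc([0,1],\Cu(A))$: for $a\in\CC([0,1],A)_+$ and any $b\in A_+$, the quoted proposition of Tikuisis shows that $\{t\mid [b]\ll[a(t)]\}$ is open, so $t\mapsto[a(t)]$ is lower semicontinuous, i.e. $\alpha([a])\in\Lsc([0,1],\Cu(A))$. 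Hence $\alpha$ corestricts to a map $\Cu(\CC([0,1],A))\to\Lsc([0,1],\Cu(A))$, and since $\Lsc([0,1],\Cu(A))$ carries the pointwise order inherited from the product, this corestriction is again an order embedding. Finally, Theorem \ref{thm:surj} immediately upgrades this order embedding to an isomorphism in the category $\Cu$, which is the assertion.

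The genuine difficulty of the result is entirely absorbed into the two theorems being invoked: the order-embedding property in Theorem \ref{th: C[0,1]-algebras} (carried by the unitary gluing and partition-of-unity argument) and the surjectivity in Theorem \ref{thm:surj} (via approximation by piecewise characteristic functions). I therefore expect no serious obstacle in assembling the corollary itself; the only point requiring a little care is the bookkeeping around the codomain, namely confirming that the fibre identifications $\Cu(\CC([0,1],A)(t))\cong\Cu(A)$ are natural enough that the product map of Theorem \ref{th: C[0,1]-algebras} becomes honest pointwise evaluation into $\prod_t\Cu(A)$, and that corestricting to the lower-semicontinuous functions preserves the order-embedding property.
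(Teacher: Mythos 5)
Your proposal is correct and follows exactly the paper's route: the authors likewise obtain the corollary immediately by combining Theorem \ref{th: C[0,1]-algebras} (the order embedding) with Theorem \ref{thm:surj} (surjectivity), with the fibre identification and the lower semicontinuity of $t\mapsto[a(t)]$ left implicit. The extra bookkeeping you supply about the codomain is accurate and harmless.
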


% 
% \begin{lemma}
%  Suppose $M$ is a semigroup in $\Cu$. Consider a piecewise characteristic function $f=\chi(\mathcal U,\varphi)$
% where $\mathcal=\{U_i\}_{i=1}^k$ and for each $i$, $U_i=\cup_{n\geq 1} U_{i,n}$ for some open sets $U_{i,j}$. 
% Let $\mathcal U_{n}=\{U_{i,n}\}_{i=1}^k$. Then, there exists $n_0$ such that 
% $\mathcal A_{\mathcal U_{n_0}}\supseteq \mathcal A_{\mathcal U}$. If we define
% \end{lemma}
% 

% 
% \begin{corollary}
% Let $A$ be an AF-algebra and $X=[0,1]\times Y$ where  $Y$ is a one dimensional compact topological space. Then, the map
% \[
% \alpha\colon\Cu(\mathrm{C}(X,A))\to \mathrm{Lsc}(X,\Cu(A)),
% \]
% given by $\alpha[a](t)=[a(t)]$ is an isomorphism in the category $\Cu$.
% \end{corollary}
% 
% \begin{proof}
%  Since $A$ is an AF-algebra, we have $\text{sr}(A)=1$. Therefore this is also the case for $\CC(X,A)$ and we can apply Corollaries 
% \ref{cor:int}, \ref{cor:dim1} and Lemma {} to obtain
% \[\Cu(\mathrm{C}([0,1]\times X,A)\cong \Cu(\mathrm{C}([0,1],\mathrm{C}(X,A))\cong \mathrm{Lsc}([0,1],\Lsc(X,\Cu(A)))\cong \Lsc([0,1]\times X,
% \Cu(A))\]
% 
% \end{proof}

%%%%%%% Pullbacks and Inductive Limits
%%%%%%%%%%%%%%%%%%%%%%%%%%%%%%%%%%%%%%%%%%%%

\section{Pullbacks}

In this section we extend the previous results to spaces of dimension at most one. En route to our result, we analyse the behavior of the functor $\Cu$ under the formation of certain pullbacks, which we describe below.

Let $A$, $B$, and $C$ be C*-algebras. Let $\pi\colon A\to C$ and $\phi\colon B\to C$ be $\ast$-homomorphisms. We can form the pullback
\begin{align*}
B\oplus_{C}A=\{(b,a)\in B\oplus A\mid \phi(b)=\pi(a)\}.
\end{align*}
By applying the Cuntz functor $\Cu(\cdot)$ to the $\ast$-homomorphisms $\pi$ and $\phi$ we obtain Cuntz semigroup morphisms (in the category $\Cu$)
\[ 
\Cu(\pi)\colon \Cu(A)\to \Cu(C)\,, \text{ and }\Cu(\phi)\colon \Cu(B)\to \Cu(C).
\]
Let us consider the pullback (in the category of ordered semigroups)
\begin{align*}
\Cu(B)\oplus_{\Cu(C)}\Cu(A)=\{([b],[a])\in \Cu(B)\oplus \Cu(A)\mid \Cu(\phi)[b]=\Cu(\pi)[a]\}.
\end{align*}
Then, we have a natural order-preserving map 
\begin{align}
\label{pullbackbeta}
\beta\colon \Cu(B\oplus_{C}A)\to \Cu(B)\oplus_{\Cu(C)}\Cu(A),
\end{align}
defined by $\beta([(b,a)])=([b],[a])$. Observe that since $\Cu(\pi)$ and $\Cu(\phi)$ are maps in $\Cu$, the pullback semigroup $\Cu(B)\oplus_{\Cu(C)}\Cu(A)$ is closed under suprema of increasing sequences. Note also that the map $\beta$ preserves suprema.

%Let $X$ be a compact Hausdorff space, and let $Y$ be a closed subset of $X$. Let $A$ be a $\CC(X)$-algebra and let $\pi_Y\colon A\to A(Y)$ be the quotient homomorphism. Let $\phi\colon B\to A(Y)$ be a $\ast$-homomorphism from a C$^*$-algebra $B$ to $A(Y)$. We can form the (surjective) pullback
%\begin{align*}
%B\oplus_{A(Y)}A=\{(b,a)\in B\oplus A\mid \phi(b)=\pi_Y(a)\}.
%\end{align*}
%By applying the Cuntz functor $\Cu(\cdot)$ to the $\ast$-homomorphisms  
%\[
%\pi_Y\colon A\to A(Y)\,, \text{ and }\phi\colon B\to A(Y)
%\]
% we obtain Cuntz semigroup morphisms (in the category $\Cu$)
%\[ 
%\Cu(\pi_Y)\colon \Cu(A)\to \Cu(A(Y))\,, \text{ and }\Cu(\phi)\colon \Cu(B)\to \Cu(A(Y))\,.
%\]
%Let us consider the pullback (in the category of ordered semigroups)
%\begin{align*}
%\Cu(B)\oplus_{\Cu(A(Y))}\Cu(A)=\{([b],[a])\in \Cu(B)\oplus \Cu(A)\mid \Cu(\phi)[b]=\Cu(\pi_Y)[a]\}.
%\end{align*}
%Then, we have a natural order-preserving map 
%\begin{align}
%\label{pullbackbeta}
%\beta\colon \Cu(B\oplus_{A(Y)}A)\to \Cu(B)\oplus_{\Cu(A(Y))}\Cu(A),
%\end{align}
%defined by $\beta([(b,a)])=([b],[a])$. Observe that since $\Cu(\pi_Y)$ and $\Cu(\phi)$ are maps in $\Cu$, the pullback semigroup $\Cu(B)\oplus_{\Cu(A(Y))}\Cu(A)$ is closed under suprema of increasing sequences. Notice also that the map $\beta$ preserves suprema.

\begin{theorem}\label{thm:injectivepullback1}
 Let $A$, $B$, and $C$ be C*-algebras such that $C$ is separable, has stable rank one, and $\mathrm{K}_1(I)=0$ for every closed two-sided ideal $I$ of $C$. Let $\phi\colon B\to C$ and $\pi\colon A\to C$ be $\ast$-homomorphisms such that $\pi$ is surjective. Then, the map
 \begin{align*}
  \beta\colon \Cu(B\oplus_{C}A)\to \Cu(B)\oplus_{\Cu(C)}\Cu(A),
 \end{align*}
given by $\beta[(b,a)]=([b],[a])$ is an order embedding.
\end{theorem}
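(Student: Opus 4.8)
The plan is to mimic the patching argument of Theorem~\ref{th: C[0,1]-algebras}, with the single fibre $C$ (and its hereditary subalgebras) playing the role of the ``gluing point''. Since $\beta$ is order preserving, I must show it reflects order: assuming $\beta[(b,a)]\le\beta[(b',a')]$, i.e.\ $[b]\le[b']$ in $\Cu(B)$ and $[a]\le[a']$ in $\Cu(A)$ with $(b,a),(b',a')$ in the pullback $P:=B\oplus_C A$, I must prove $(b,a)\precsim(b',a')$ in $P$. First I reduce to the stable case: as $-\otimes\K$ is exact and $\pi$ is surjective, $P\otimes\K\cong (B\otimes\K)\oplus_{C\otimes\K}(A\otimes\K)$, and by Remark~\ref{rem:stable} this does not affect the Cuntz computation; moreover $C\otimes\K$ is still separable, has stable rank one (stable rank one passes to stabilisations), and $K_1$ of each of its ideals (which are of the form $I\otimes\K$) still vanishes. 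By Proposition~\ref{lem:extror} it then suffices to produce, for each small $\epsilon>0$, an element of $P$ witnessing $\big((b,a)-K\epsilon\big)_+\precsim(b',a')$ for a universal constant $K$, and then let $\epsilon\to 0$.

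For the construction, write $c:=\phi(b)=\pi(a)$ and $c':=\phi(b')=\pi(a')$. Using $(b-\epsilon)_+\precsim b'$ and $(a-\epsilon)_+\precsim a'$ I choose $x_B\in B$ and $x_A\in A$ of the form $x_B=(b')^{1/2}d_B$, $x_A=(a')^{1/2}d_A$ so that $x_B^*x_B\approx(b-\epsilon)_+$, $x_A^*x_A\approx(a-\epsilon)_+$ (within $\epsilon$), while automatically $x_Bx_B^*\in\Her(b')$ and $x_Ax_A^*\in\Her(a')$. Applying $\phi$ and $\pi$ and using $\phi(b)=\pi(a)=c$, the images satisfy $\phi(x_B)^*\phi(x_B),\ \pi(x_A)^*\pi(x_A)\approx(c-\epsilon)_+$ with $\phi(x_B)\phi(x_B)^*,\ \pi(x_A)\pi(x_A)^*\in\Her(c')$. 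Since $\Her(c')$ is a hereditary subalgebra of the stable-rank-one algebra $C$, it has stable rank one, hence lies in $\overline{\mathrm{GL}(\Her(c')^\sim)}$, so Lemma~\ref{lem: unitary} (with ambient hereditary subalgebra $\Her(c')$, positive contraction $(c-\epsilon)_+$, and the two elements $\phi(x_B),\pi(x_A)$) yields a unitary $w\in\Her(c')^\sim$ with $\|\phi(x_B)-w\,\pi(x_A)\|<9\epsilon$.

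Next I upgrade this approximate matching in $C$ to an \emph{exact} matching in $P$, which is the delicate point. By \cite[Theorem 2.8]{brown} and separability, $\Her(c')$ is stably isomorphic to the ideal of $C$ it generates, so $K_1(\Her(c'))=0$; combined with stable rank one this forces $\mathrm{U}(\Her(c')^\sim)$ to be connected (\cite[Theorem 2.10]{Rieffel}), so $w$ lies in the connected component of $1$. Because $\pi$ is surjective, its restriction $\pi\colon\Her(a')\to\Her(c')$ is onto, and unitaries in the connected component of the identity lift (\cite[Corollary 4.3.3]{wo}), giving a unitary $\tilde w\in\Her(a')^\sim$ with $\pi(\tilde w)=w$. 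Setting $s:=\tilde w\,x_A\in\overline{a'A}$, I get $s^*s=x_A^*x_A\approx(a-\epsilon)_+$, $ss^*\in\Her(a')$, and $\pi(s)=w\,\pi(x_A)$, which is within $9\epsilon$ of $\phi(x_B)$. Since $\pi\colon\overline{a'A}\to\overline{c'C}$ is a surjection of Banach spaces, hence open, I can lift the small defect $\phi(x_B)-\pi(s)\in\overline{c'C}$ to $t\in\overline{a'A}$ of norm $O(\epsilon)$ and replace $s$ by $s':=s+t\in\overline{a'A}$; now $\pi(s')=\phi(x_B)$ exactly, $\|s'-s\|=O(\epsilon)$, and crucially $s'\in\overline{a'A}$ still forces $s's'^*\in\Her(a')$.

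Finally, $z:=(x_B,s')\in P$ by exactness of the matching, with $z^*z=(x_B^*x_B,s'^*s')$ within $O(\epsilon)$ of $\big((b,a)-\epsilon\big)_+=\big((b-\epsilon)_+,(a-\epsilon)_+\big)$. To land $zz^*$ in the right hereditary subalgebra I use the approximate units of the fibres compatibly: for $\delta>0$ the element $\big(g_\delta(b'),g_\delta(a')\big)$, where $g_\delta(\tau)=\tau(\tau+\delta)^{-1}$, lies in $P$ (its two coordinates have common image $g_\delta(c')$) and equals $g_\delta\big((b',a')\big)\in\overline{(b',a')P}$; multiplying $z$ on the left and letting $\delta\to 0$ shows $z\in\overline{(b',a')P}$ because $x_B\in\overline{b'B}$ and $s'\in\overline{a'A}$. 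Hence $zz^*\in\overline{(b',a')P(b',a')}=\Her\big((b',a')\big)$, and Lemma~2.2 of \cite{Kirchberg-Rordam} gives $\big((b,a)-K\epsilon\big)_+\precsim(b',a')$; letting $\epsilon\to 0$ yields $[(b,a)]\le[(b',a')]$. The main obstacle throughout is precisely the passage from the approximate compatibility furnished by Lemma~\ref{lem: unitary} over $C$ to exact membership in the pullback $P$ without destroying the hereditary containment $zz^*\in\Her((b',a'))$; this is resolved by performing the correction entirely inside the closed right ideal $\overline{a'A}$ (where surjectivity of $\pi$ allows exact lifting and hereditariness of $ss^*$ is free) and by exploiting that the compatible functions $(g_\delta(b'),g_\delta(a'))$ remain in $P$.
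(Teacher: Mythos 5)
Your proposal is correct and follows essentially the same route as the paper's proof: reduce to the stable case, compare the two witnesses over $C$ via Lemma \ref{lem: unitary}, use Brown's stable isomorphism theorem, the $\mathrm{K}_1$ hypothesis and Rieffel's theorem to place the resulting unitary in the connected component of the identity, lift it through the surjection $\pi$, correct the small defect inside $\overline{a'A}$ to obtain an exact element of the pullback, and conclude with \cite[Lemma 2.2]{Kirchberg-Rordam} and a supremum over $\epsilon$. The only differences (lifting the defect by openness of $\pi\colon\overline{a'A}\to\overline{c'C}$ rather than by distance to $\overline{a'A}\cap\ker\pi$, and the $g_\delta$ approximate-unit trick for $zz^*\in\Her((b',a'))$) are cosmetic.
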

\begin{proof}
By \cite[Theorem 3.9]{pedersenpullbacks} applied to  $Y=\K$ we may assume that $A$, $B$, and $C$ are stable. Let $(b_1, a_1)$ and $(b_2, a_2)$ be positive contractions of $B\oplus_{C}A$ such that $a_1\precsim a_2$ and $b_1\precsim b_2$. Let $0<\epsilon<1$. Then, by the definition of the Cuntz relation there are $x\in A$ and $y\in B$ such that
\begin{align*}
& \|a_1-x^*x\|<\epsilon, \quad xx^*\in \Her(a_2),\\
& \|b_1-y^*y\|<\epsilon, \quad yy^*\in \Her(b_2).
\end{align*}
Since $\pi(a_1)=\phi(b_1)$ and $\pi(a_2)=\phi(b_2)$, the equations above imply that
\begin{align}
 &\|\pi(a_1)-\pi(x)^*\pi(x)\|<\epsilon,\quad \|\pi(a_1)-\phi(y)^*\phi(y)\|<\epsilon,\nonumber\\ 
 &\pi(x)\pi(x)^*,\phi(y)\phi(y)^*\in \Her(\pi(a_2)).\label{phiy}
\end{align}
By Lemma \ref{lem: unitary} there is a unitary $u\in \Her(\pi(a_2))^\sim$ such that
\begin{align*}
 \|u\pi(x)-\phi(y)\|<9\epsilon.
\end{align*}
Using \cite[Theorem 2.8]{brown} and that $C$ is separable it follows that $\Her(a_2)$ is stable isomorphic to an ideal of $C$. Hence, by our assumptions $\mathrm{K}_1(\Her(\pi(a_2)))=0$. Since $\mathrm{sr}(A)=1$, we have by \cite[Theorem 2.10]{Rieffel} that $\mathrm{U}(\Her(\pi(a_2)))=\mathrm{U}_0(\Her(\pi(a_2)))$. Therefore, $u$ is in the connected component of the identity. By the surjectivity of the map $\pi$ there exists a unitary $v\in \Her(a_2)^\sim$ such that $\widetilde{\pi}(v)=u$, where $\widetilde{\pi}\colon A^\sim\to C^\sim$ is the extension of $\pi$ to the unitization of the algebras $A$ and $C$. In addition, there exists $y'\in \Her(a_2)$ such that $\pi(y')=\phi(y)$. Hence, we have
\begin{align*}
 \|\pi(vx-y')\|=\|u\pi(x)-\phi(y)\|<9\epsilon.
\end{align*}
Since $vx-y'\in \overline{a_2A}$ there exists $z'\in \overline{a_2A}\cap \mathrm{Ker}(\pi)$ such that
\begin{align*}
 \|vx-y'-z'\|< 9\epsilon.
\end{align*}
Set $y'+z'=z$. Then,
\[
 \pi(z)=\phi(y),\quad zz^*\in \Her(a_2),\quad\|vx-z\|<9\epsilon.
\]
Also,
\begin{align*}
 \|a_1-z^*z\|&\le\|a_1-x^*x\|+\|x^*x-z^*z\|\\
 &< \epsilon+\|(vx)^*(vx)-z^*z\|\\
 &\le \epsilon+\|(vx)^*(vx-z)\|+\|(vx-z)^*z\|\\
 &\le \epsilon+\|vx\|\|vx-z\|+\|z\|\|vx-z\|\\
 &\le \epsilon+2\|vx-z\|+11\|vx-z\|\\
 &<118\epsilon.
\end{align*}
Since $\pi(z)=\phi(y)$ the element $(y,z)$ belongs to $B\oplus_{C}A$, and by the previous computation
\begin{align*}
 \|(b_1, a_1)-(y,z)^*(y,z)\|<118\epsilon.
\end{align*}
In addition, since $yy^*\in \Her(b_2)$ and $zz^*\in \Her(a_2)$ we have
\begin{align*}
(y,z)(y,z)^*=\lim_{n\to \infty}(b_2, a_2)^\frac{1}{n}(y,z)(y,z)^*(b_2, a_2)^\frac{1}{n}\in \Her((b_2,a_2)).
\end{align*}

By \cite[Lemma 2.2]{Kirchberg-Rordam} we have
\begin{align*}
 ((b_1, a_1)-118\epsilon)_+\precsim (y,z)^*(y,z)\sim (y,z)(y,z)^*\precsim (b_2,a_2).
\end{align*}
Therefore,
\begin{align*}
 [(b_1,a_1)]=\sup_{\epsilon>0}[((b_1, a_1)-118\epsilon)_+]\le [(b_2,a_2)].
\end{align*}
\end{proof}

\begin{theorem}
\label{thm:injectivepullback}
Let $X$ be a one-dimensional compact Hausdorff space and let $Y$ be a closed subset of $X$. Let $A$ be a $\CC(X)$-algebra and let $\pi_Y\colon A\to A(Y)$ be the quotient map. Let $B$ be a C$^*$-algebra and let $\phi\colon B\to A(Y)$ be a $\ast$-homomorphism.
Suppose that, for every $x\in X$, the C$^*$-algebra $A(x)$ is separable, has stable rank one, and $\mathrm{K}_1(I)=0$ for every two-sided ideal $I$ of $A(x)$. Then, the map 
\[
 \beta\colon \Cu(B\oplus_{A(Y)}A)\to \Cu(B)\oplus_{\Cu(A(Y))}\Cu(A),
\]
given by $\beta[(b,a)]=([b],[a])$ is an order embedding.
\end{theorem}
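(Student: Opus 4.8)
The plan is to fuse the unitary-gluing argument of Theorem \ref{thm:injectivepullback1} with the covering technique of Theorem \ref{th: C[0,1]-algebras}, the role of the interval being played now by the one-dimensional space $X$. First I would pass to stable algebras: by \cite[Theorem 3.9]{pedersenpullbacks} applied with $Y=\K$ the pullback commutes with stabilisation and the hypotheses on the fibres are stable, so I may assume $A$, $B$ and $A(Y)$ are stable. Since $\beta$ is order preserving, to prove it is an order embedding I must show that, given positive contractions $(b_1,a_1),(b_2,a_2)\in B\oplus_{A(Y)}A$ with $a_1\precsim a_2$ in $A$ and $b_1\precsim b_2$ in $B$, one has $(b_1,a_1)\precsim(b_2,a_2)$. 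Fixing $0<\epsilon<1$, choose $x\in A$ and $y\in B$ with $\|a_1-x^*x\|<\epsilon$, $xx^*\in\Her(a_2)$, $\|b_1-y^*y\|<\epsilon$ and $yy^*\in\Her(b_2)$. The whole problem then reduces to manufacturing a single element $z\in A$ with $\pi_Y(z)=\phi(y)$, $zz^*\in\Her(a_2)$ and $\|a_1-z^*z\|$ of order $\epsilon$: for then $(y,z)$ lies in the pullback, $\epsilon$-approximates $(b_1,a_1)$, and satisfies $(y,z)(y,z)^*\in\Her((b_2,a_2))$.

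Over $Y$ the elements $\pi_Y(x)$ and $\phi(y)$ are two approximate square roots of $\pi_Y(a_1)=\phi(b_1)$ whose range projections lie in $\Her(\pi_Y(a_2))$. The idea is to rotate $x$ into agreement with $\phi(y)$ on $Y$ by a unitary $v\in\Her(a_2)^\sim$ and then, exactly as in the last part of the proof of Theorem \ref{thm:injectivepullback1}, to absorb the remaining error by a correction in $\overline{a_2A}\cap\ker\pi_Y$; since replacing $x$ by $vx$ does not disturb $x^*x$, the estimate $\|a_1-z^*z\|\lesssim\epsilon$ survives. To produce $v$ I would work fibrewise: for each $x_0\in X$ the algebra $\Her(a_2(x_0))$ is, stably, an ideal of $A(x_0)$ by \cite[Theorem 2.8]{brown}, hence has stable rank one and vanishing $\mathrm{K}_1$, so Lemma \ref{lem: unitary} supplies a fibre unitary $u_{x_0}$ with $\|u_{x_0}\pi_{x_0}(x)-\phi(y)(x_0)\|<9\epsilon$, and by \cite[Theorem 2.10]{Rieffel} each $u_{x_0}$ lies in the connected component of the identity. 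Using upper semicontinuity of the norm, compactness, and crucially the hypothesis $\dim X\le 1$ to pass to a finite covering of multiplicity at most two, I would glue the corresponding local rotations of $x$ across the pairwise overlaps and patch them with a partition of unity, precisely as the elements $y_i=v_1\cdots v_{i-1}x_i$ were assembled into $y$ in Theorem \ref{th: C[0,1]-algebras}, pinning the construction to the fibre data $u_{x_0}$ on the closed set $Y$. Surjectivity of $\pi_Y$ (together with the liftability of connected-component unitaries, see \cite[Corollary 4.3.3]{wo}) then lets me realise the result as an honest $v\in\Her(a_2)^\sim$ over all of $X$ and carry out the correction yielding $z$.

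With $z$ in hand I would finish as in Theorem \ref{thm:injectivepullback1}: estimate $\|(b_1,a_1)-(y,z)^*(y,z)\|\le K\epsilon$ for an absolute constant $K$, note $(y,z)(y,z)^*\in\Her((b_2,a_2))$, and conclude from \cite[Lemma 2.2]{Kirchberg-Rordam} that $\bigl((b_1,a_1)-K\epsilon\bigr)_+\precsim(b_2,a_2)$; letting $\epsilon\to 0$ gives $(b_1,a_1)\precsim(b_2,a_2)$. I expect the main obstacle to be the middle step. Unlike the interval, where the nerve of the cover is linearly ordered so that the fibre unitaries chain together without any global consistency constraint, a general one-dimensional $X$ admits only covers whose nerve is a graph, and one must guarantee that the assembled rotation extends to a unitary over all of $X$ restricting to the prescribed data on $Y$, rather than being blocked by a $\mathrm{K}_1$-type winding along a loop. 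It is exactly the combination of $\dim X\le 1$ (which reduces overlaps to pairwise ones and gives enough freedom to absorb winding along the one-skeleton) with the vanishing of $\mathrm{K}_1$ on the fibres that removes this obstruction; making that gluing precise, while simultaneously tracking the $\Her(a_2)$-containments and the matching condition over $Y$, is the technical heart of the argument.
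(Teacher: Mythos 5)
Your overall skeleton is right (reduce to stable algebras, produce two approximate square roots of $a_1$ --- one valid globally, one matching $\phi$ over $Y$ --- rotate one into the other by a unitary in $\Her(a_2)^\sim$, patch, and finish with \cite[Lemma 2.2]{Kirchberg-Rordam}), and you correctly locate the difficulty in the gluing of local unitaries. But the step you defer as ``the technical heart'' is exactly the step that is missing, and the mechanism you propose for it is not the one that works. First, a well-posedness problem: $\phi(y)$ lives in $A(Y)$, so the fibre estimate $\|u_{x_0}\pi_{x_0}(x)-\phi(y)(x_0)\|<9\epsilon$ you invoke ``for each $x_0\in X$'' only makes sense for $x_0\in Y$; after lifting $\phi(y)$ (or rather $\phi(d)$, $d$ the contraction implementing $b_1\precsim b_2$) to $h\in A$, the inequality $\|a_1(x)-h(x)^*a_2(x)h(x)\|<\epsilon$ holds only on a neighbourhood $U$ of $Y$, so Lemma \ref{lem: unitary} is simply unavailable at fibres far from $Y$ and no fibrewise unitary exists there. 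Second, chaining local rotations over a multiplicity-two cover of $X$, ``precisely as'' in Theorem \ref{th: C[0,1]-algebras}, relies on the nerve being linearly ordered; on a graph the telescoping products $v_1\cdots v_{i-1}$ acquire a holonomy around each cycle, and your assertion that $\dim X\le 1$ together with $\mathrm{K}_1=0$ ``absorbs the winding'' is precisely the unproved claim. (It is also unclear what it means to pin the glued unitary to the pointwise data $u_{x_0}$ on all of $Y$, since those fibre unitaries carry no continuity.)

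The paper's proof avoids all of this by never gluing unitaries over $X$ or over $Y$. One chooses, via \cite[Theorem 4.2.2]{engelkin} and $\dim X\le 1$, an open set $V$ with $Y\subseteq V\subseteq\overline V\subseteq U$ whose boundary $\bd(V)$ is empty or \emph{zero-dimensional}. The element $y_2=a_2^{1/2}h$ is used as is on $V$ (so that $\pi_Y(z)=\phi(b_2^{1/2}d)$ holds exactly, with no correction by $\ker\pi_Y$ needed), the element $y_1=a_2^{1/2}c$ is used off $\overline V$, and the unitary comparison is performed only at fibres over $\bd(V)$. There the gluing is trivial: a compact zero-dimensional set admits a finite cover by pairwise disjoint clopen sets, so $D(\bd(V))\cong\bigoplus_i D(V_i)$ and the direct sum of the lifted fibre unitaries $v^{x_i}$ is automatically a unitary in $\mathrm{U}_0$, which lifts to a single $u$ over $X$. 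Setting $y_1'=uy_1$ and patching $y_1'$ with $y_2$ by a partition of unity subordinate to $V^c\cup W$ and $V\cup W$ (with $W$ a small neighbourhood of $\bd(V)$ disjoint from $Y$) gives $z$ with $\|a_1-z^*z\|<28\epsilon$. So the hypothesis $\dim X\le 1$ enters through the existence of a separating zero-dimensional boundary, not through a multiplicity-two cover of $X$; this is the idea your proposal lacks, and without it the argument as you describe it does not go through.
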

\begin{proof}
By Remark \ref{rem:stable} and \cite[Theorem 3.9]{pedersenpullbacks} we may assume that $A$, $A(Y)$ and $B$ are stable.

Let $(b_1,a_1)$ and $(b_2,a_2)$ be positive elements of $B\oplus_{A(Y)}A$ such that $a_1\precsim a_2$ and $b_1\precsim b_2$. Let $\epsilon>0$. By the definition of the Cuntz order, there exist $d\in B$ and $c\in A$ such that 
\begin{align}\label{acac}
\|a_1-c^*a_2c\|<\epsilon, \quad \|b_1-d^*b_2d\|<\epsilon.
\end{align}
Since $\phi(b_1)=\pi_Y(a_1)$ and $\phi(b_2)=\pi_Y(a_2)$, the second inequality in the equation above implies that
\begin{align*}
\|\pi_Y(a_1)-\phi(d)^*\pi_Y(a_2)\phi(d)\|<\epsilon.
\end{align*}
Choose an element $h\in A$ such that $\pi_Y(h)=\phi(d)$. Then, we have
\begin{align}\label{ahah}
\|a_1(x)-h(x)^*a_2(x)h(x)\|<\epsilon,
\end{align}
for all $x\in Y$. By upper semicontinuity, there exists an open neighbourhood $U$ of $Y$ such that the inequality above holds for all $x\in U$. Since $Y\subseteq X$ is compact and $X$ is normal there exists an open subset $V$ such that $Y\subseteq V\subseteq \overline{V}\subseteq U$. Moreover, 
by Theorem \cite[4.2.2]{engelkin} we may assume that $V$ has an empty or zero-dimensional boundary.
%\cite[Theorem 4-5.4]{pears} and by the definition of the large inductive dimension we may assume that $V$ has zero-dimensional boundary.

% 
% Suppose that $\bd(V)=\varnothing$. Then, $V$ and its complement $V^c$ are both closed and open subsets of $X$ and their union is $X$. This implies that the natural map $(\pi_V,\pi_{V^c})\colon A\to A(V)\oplus A(V^c)$ given by $(\pi_V, \pi_{V^c})(a)=(a|_V, a|_{V^c})$ is an isomorphism of $\CC(X)$-algebras. Therefore, we may assume that $A=A(V)\oplus A(V^c)$. 
% 
% Since the inequality \eqref{ahah} holds for all $x\in V$ it follows that
% \begin{align}\label{ahah2}
%  \|a_1|_V-h|_V^*a_2|_Vh|_V\|<\epsilon.
% \end{align}
% Consider the element $f=(h|_V, c|_{V^c})\in A$. Then, $\pi_Y(f)=h|_Y=\phi(d)$ and by \eqref{acac} and \eqref{ahah2} we have
% \begin{align*}
%  \|a_1-f^*a_2f\|<\epsilon,\quad \|b_1-d^*b_2d\|<\epsilon.
% \end{align*}
% This implies that,
% \[
%  \|(b_1, a_1)-(f,d)^*(b_2,a_2)(f,d)\|<\epsilon.
% \]
% Since $\epsilon$ can be chosen arbitrarily small we conclude by the definition of the Cuntz relation that $(b_1,a_1)\precsim (b_2, a_2)$ as elements of $B\oplus_{A(Y)}A$.

%Now let us consider the case $\bd(V)\neq \varnothing$. 

Let $D:=A+\CC(X)\cdot 1_{\M(A)}$. Then, $D$ is a $\CC(X)$-algebra that contains $A$ as a closed two-sided ideal by the first part of Lemma \ref{lem:AB}. Consider the elements $y_1=a_2^{\frac{1}{2}}c$ and $y_2=a_2^{\frac{1}{2}}h$. Then $y_1y_1^*, y_2y_2^*\in \Her(a_2)$. Moreover, rewriting the first inequality in \eqref{acac} and the inequality \eqref{ahah}, we have 
\begin{align*}
\|a_1-y_1^*y_1\|<\epsilon, \quad \|a_1(x)-y_2(x)^*y_2(x)\|<\epsilon,
\end{align*}
for all $x\in \overline{V}$, where the second inequality holds in $D(x)$ (here we are using that $A(x)\cong \pi_x(A)$ by the second part of Lemma \ref{lem:AB}, where $\pi_x\colon D\to D(x)$ denotes the quotient map). In particular, if $x\in \mathrm{bd}(V)$ we have
$y_1(x)y_1(x)^*, y_2(x)y_2(x)^*\in \Her(a_2(x))$, and
\begin{align}\label{a1a1}
\|a_1(x)-y_1(x)^*y_1(x)\|<\epsilon, \quad \|a_1(x)-y_2(x)^*y_2(x)\|<\epsilon.
\end{align}

Since by assumption $A(x)$ has stable rank one, the hereditary algebra $\Her(a_2(x))$ has stable rank one. In addition, since $A(x)$ is stable and $A\cong\pi_x(A)$ we have that $1_{\M(A)}(x)\notin\pi_x(A)$, whence $1_{\M(A)}(x)\notin \Her(a_2(x))$. This implies that $\Her(a_2(x))+\C\cdot 1_{\M(A)}\cong\Her(a_2(x))^\sim$. By Lemma \ref{lem: unitary} applied to \eqref{a1a1}, there exists a unitary $u_x\in \Her(a_2(x))+\C\cdot 1_{\M(A)}(x)$ such that
\begin{equation}
\label{uxyu}
\|u_xy_1(x)-y_2(x)\|<9\epsilon.
\end{equation}
Since the $\mathrm{K}_1$-group of every ideal of $A(x)$ is trivial, it follows that $\mathrm{K}_1(\Her(a_2(x)))=0$ by \cite[Theorem 2.8]{brown} and the separability of $A(x)$. Hence, by \cite[Theorem 2.10]{Rieffel} every unitary in $\Her(a_2(x))+\C\cdot 1_{\M(A)}(x)$ is connected to the identity, and in particular $u_x$. Since $\Her(a_2(x))+\C\cdot 1_{\M(A)}(x)$ is the image of $\Her(a_2)+\C\cdot 1_{\M(A)}$ by $\pi_x$ and unitaries in the connected component of the identity lift, there exists a unitary $v^x\in \mathrm{U}_0(\Her(a_2)+\C\cdot 1_{\M(A)})$ such that $v^x(x)=u_x$.

Suppose first that  $\bd(V)\neq\varnothing$. Note that, by \eqref{uxyu}, 
\[
\|(v^xy_1-y_2)(x)\|=\|u_xy_1(x)-y_2(x)\|<9\epsilon\,,
\] 
for all $x\in X$. Hence by the upper semicontinuity of the norm and since $\mathrm{bd}(V)$ is zero dimensional and compact, there are points $x_1,\ldots,x_n\in \mathrm{bd}(V)$ and an open cover of $\mathrm{bd}(V)$ consisting of pairwise disjoint neighbourhoods $(V_i)_{i=1}^n$ with $x_i\in V_i$ such that
\begin{align}\label{vyy}
\|(v^{x_i}y_1-y_2)(x)\|<9\epsilon,
\end{align}
for all $i$ and all $x\in V_i$. Since the sets $(V_i)_{i=1}^n$ are open, closed, pairwise disjoint, and form a cover of $\mathrm{bd}(V)$, the C*-algebra $D(\mathrm{bd}(V))$ can be identified with the C*-algebra $\bigoplus_{i=1}^nD(V_i)$. Let us consider the element $v=\bigoplus_{i=1}^n\pi_{V_i}(v^{x_i})\in D(\mathrm{bd}(V))$ (here we are using the previous identification). Then, 
$v$ is a unitary in $\pi_{\mathrm{bd}(V)}(\Her(a_2)+\C\cdot 1_{\M(A)})$ that is connected to the identity $\pi_{\mathrm{bd}(V)}(1_{\M(A)})$. Hence, there is a unitary $u\in \Her(a_2)+\C\cdot 1_{\M(A)}$ such that $\pi_{\mathrm{bd}(V)}(u)=v$. By \eqref{vyy} we have
\[
\|u(x)y_1(x)-y_2(x)\|<9\epsilon\,,
\]
for all $x\in\mathrm{bd}(V)$.

Set $uy_1=y'_1$. Since $y_1\in \overline{a_2A}$, It follows that $y_1'\in \overline{a_2A}$. Further, $y_1'y_1'^*\in \Her(a_2)$, and 
\begin{align*}
\|a_1-(y'_1)^*y'_1\|<\epsilon, \quad \|y'_1(x)-y_2(x)\|<9\epsilon.
\end{align*}
for all $x\in \mathrm{bd}(V)$. By upper semicontinuity of the norm there exist an open neighbourhood $W$ of $\mathrm{bd}(V)$ such that 
$W\cap Y=\varnothing$, and 
\begin{align}\label{yy}
\|y'_1(x)-y_2(x)\|<9\epsilon,
\end{align}
for all $x\in W$. Let $f_1,f_2\in \mathrm{C}(X)$ be a partition of unity associated to the covering of $X$ given by the  open sets $V^c\cup W$ and $V\cup W$. 
In case $\bd(V)=\varnothing$ we proceed analogously with $u=1_{\M(A)}$ and $W=\varnothing$, since now $V$, $V^c$ are
both open sets. 

Consider the the element $z=f_1y_1'+f_2y_2$. Then, $z\in A$ and $zz^*\in \Her(a_2)$. Next, using \eqref{yy}, a computation analogous to the one carried out in the proof of Theorem \ref{th: C[0,1]-algebras} shows that
\begin{align*}
\|z^*z-a_1\|<28\epsilon.
\end{align*}
By construction $\pi_Y(z)=\phi(b_2^{\frac{1}{2}}d)$, whence the element $(z,b_2^{\frac{1}{2}}d)\in B\oplus_{A(Y)}A$, and
\begin{align*}
\|(z,b_2^{\frac{1}{2}}d)^*(z,b_2^{\frac{1}{2}}d)-(a_1,b_1)\|<28\epsilon.
\end{align*}
In addition, since $zz^*\in \Her(a_2)$ and $b_2^{\frac{1}{2}}d(b_2^{\frac{1}{2}}d)^*\in\Her(b_2)$, we have
\begin{align*}
 (z,b_2^{\frac{1}{2}}d)(z,b_2^{\frac{1}{2}}d)^*=\lim_{n\to \infty}(a_2,b_2)^\frac{1}{n}(z,b_2^{\frac{1}{2}}d)(z,b_2^{\frac{1}{2}}d)^*(a_2,b_2)^\frac{1}{n}\in \Her((a_2,b_2)).
\end{align*}
Hence, by \cite[Lemma 2.2]{Kirchberg-Rordam}
\begin{align*}
((a_1,b_1)-28\epsilon)_+\precsim (a_2,b_2).
\end{align*}
Therefore,
\begin{align*}
[(a_1,b_1)]=\sup[((a_1,b_1)-28\epsilon)_+]\le[(a_2,b_2)].
\end{align*}
\end{proof}

\begin{theorem}
\label{thm:surjectivepullback}
Let $X$ be a compact Hausdorff space and let $Y$ be a closed subset of $X$. Let $A$ be a $\CC(X)$-algebra, and let $B$ be any C$^*$-algebra. Suppose that the map 
\begin{align*}
\alpha\colon \Cu(A)\to \prod_{x\in X}\Cu(A(x)),
\end{align*}
given by $\alpha([a])(x)=[a(x)]$ is an order embedding. Then
\begin{enumerate}[{\rm (i)}]
\item The map 
\begin{align*}
\beta\colon \Cu(B\oplus_{A(Y)}A)\to \Cu(B)\oplus_{\Cu(A(Y))}\Cu(A),
\end{align*}
defined by $\beta([(b,a)])=([b],[a])$ is surjective. 
\item The pullback semigroup $\Cu(B)\oplus_{\Cu(A(Y))}\Cu(A)$ is in the category $\Cu$.
\end{enumerate}
\end{theorem}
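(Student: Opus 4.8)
The plan is to treat the two statements in turn, deducing (ii) from (i). As in the proof of Theorem~\ref{thm:injectivepullback}, I would first reduce to the stable case: by Remark~\ref{rem:stable} and \cite[Theorem 3.9]{pedersenpullbacks} we may assume that $A$, $A(Y)$ and $B$ are stable, so that $\pi_Y\colon A\to A(Y)$ remains a surjection and $\alpha$ remains an order embedding. Fix an element $([b_0],[a_0])$ of the pullback semigroup $\Cu(B)\oplus_{\Cu(A(Y))}\Cu(A)$, so that $[\phi(b_0)]=[\pi_Y(a_0)]$ in $\Cu(A(Y))$; writing $c_0:=\pi_Y(a_0)$ and $c_1:=\phi(b_0)$, this says $c_0\sim c_1$. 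Since $p_B\colon E:=B\oplus_{A(Y)}A\to B$, $(b,a)\mapsto b$, is surjective (every $b\in B$ lifts because $\pi_Y$ is onto) while the value $\pi_Y(a)=\phi(b)$ of any section must lie in $\mathrm{im}\,\phi$, the only sensible way to produce a preimage of $([b_0],[a_0])$ is to keep $b=b_0$ and find a positive element $a\in A$ with
\[
\pi_Y(a)=c_1\qquad\text{and}\qquad [a]=[a_0];
\]
then $(b_0,a)\in E$ and $\beta([(b_0,a)])=([b_0],[a_0])$, giving surjectivity.

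The point where the hypothesis on $\alpha$ enters is that, once such an $a$ is built, the equality $[a]=[a_0]$ need only be checked fibrewise: since $\alpha$ is an order embedding it is injective, so it suffices that $[a(x)]=[a_0(x)]$ for every $x\in X$. For $x\in Y$ this is automatic, because $a(x)=\pi^Y_x(c_1)$ and $a_0(x)=\pi^Y_x(c_0)$, and Cuntz equivalence is preserved by the quotient maps $\pi^Y_x$, so $c_0\sim c_1$ forces $a(x)\sim a_0(x)$. Thus the whole problem reduces to the following lifting statement: given $a_0\in A_+$ and $c_1\in A(Y)_+$ with $c_1\sim\pi_Y(a_0)$, there is $a\in A_+$ with $\pi_Y(a)=c_1$ and with $a(x)\sim a_0(x)$ for every $x\in X\setminus Y$.

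I expect this lifting to be the \emph{main obstacle}. The natural strategy is to choose a positive lift $\tilde c\in A_+$ of $c_1$ and to glue $\tilde c$, which carries the correct value on $Y$, to $a_0$, which carries the correct fibres far from $Y$, over a transition neighbourhood of $Y$ on which the fibrewise comparison $\tilde c(x)\sim a_0(x)$ can be arranged to persist (using upper semicontinuity of the norm and openness of Cuntz comparison, together with the central action of $\CC(X)$ and a partition of unity, as in the blending step of Theorem~\ref{th: C[0,1]-algebras}). The delicate difference from Theorem~\ref{th: C[0,1]-algebras} is that there the overlapping sections were first made approximately equal by a unitary alignment supplied by stable rank one and $\mathrm{K}_1$ vanishing, after which a convex combination was harmless; here no such hypotheses are available for $A$, so the convex blend of two merely Cuntz equivalent sections changes the fibrewise class, while a simple orthogonal sum doubles it. The construction must therefore move $\tilde c$ continuously into $a_0$ keeping every fibre Cuntz equivalent to $a_0(x)$, that is, produce an honest continuous section realizing the prescribed fibrewise classes; carrying this out under the sole assumption that $\alpha$ is an order embedding is where the real work lies.

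Finally, for (ii) I would deduce that $P:=\Cu(B)\oplus_{\Cu(A(Y))}\Cu(A)$ lies in $\Cu$ by transporting the axioms through $\beta$. The semigroup $P$ is ordered and, as already noted, is closed under suprema of increasing sequences, computed coordinatewise. The maps $\Cu(p_B)$ and $\Cu(p_A)$ are morphisms in $\Cu$, hence preserve $\ll$ and suprema, so $\beta=(\Cu(p_B),\Cu(p_A))$ preserves suprema and sends $\ll$ in $\Cu(E)$ to the coordinatewise, hence to the ambient, relation $\ll$ on $\Cu(B)\oplus\Cu(A)$. Given $p\in P$, surjectivity from (i) provides $e\in\Cu(E)$ with $\beta(e)=p$; writing $e=\sup_n e_n$ with $e_n\ll e_{n+1}$, the sequence $(\beta(e_n))$ is rapidly increasing in $P$ (compact containment in $P$ is implied by that in the ambient product, since $P$ is closed under the relevant suprema) with supremum $p$. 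Compatibility of suprema and of $\ll$ with addition in $P$ is inherited from the coordinatewise operations on $\Cu(B)\oplus\Cu(A)$. Hence $P$ is an object of $\Cu$.
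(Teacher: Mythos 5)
Your setup is correct and you have located the crux accurately, but the proposal does not contain a proof of (i): the ``lifting statement'' you reduce surjectivity to is explicitly left open (``carrying this out \dots is where the real work lies''), and that is precisely the content of the theorem. Worse, the reduction you choose is strictly harder than what is needed. You ask for a single $a\in A_+$ with $\pi_Y(a)=c_1$ \emph{exactly} and $[a]=[a_0]$ \emph{exactly}, so that $([b_0],[a_0])$ acquires an honest preimage $[(b_0,a)]$. The paper never produces such an exact preimage. Instead it exploits that $\beta$ preserves suprema: for each $\epsilon>0$ it builds an element $z\in A_+$ with $\pi_Y(z)=\phi((b-\delta)_+)$ (note: the $B$-coordinate is the cut-down $(b-\delta)_+$, not $b$) and with
\[
(a'-3\epsilon)_+(x)\precsim z(x)\precsim (a'-3\epsilon')_+(x)\quad\text{for all }x\in X,
\]
where $a'$ is a unitary conjugate of $a$ and $\epsilon'<\epsilon$. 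The order-embedding hypothesis on $\alpha$ is used exactly here, to convert these fibrewise inequalities into $(a'-3\epsilon)_+\precsim z\precsim(a'-3\epsilon')_+$ in $A$; one then obtains a rapidly increasing sequence $([(b-\delta_n)_+],[z_n])$ in the pullback whose supremum is sent by $\beta$ to $([b],[a])$. Only classes are matched in the limit, never elements.

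This weakening is what defuses the obstruction you correctly identified. You observe that a partition-of-unity blend $f_1\tilde c+f_2 a_0$ of two merely Cuntz equivalent sections destroys the fibrewise class, and that without stable rank one and $\mathrm K_1$-vanishing no unitary alignment as in Theorem~\ref{th: C[0,1]-algebras} is available. True --- but the paper does not need the blend to preserve the class, only to squeeze it between two cut-downs of $a$. Concretely, after conjugating $a$ by a unitary furnished by Proposition~\ref{lem:extror}(iv) so that $\pi_{\overline U}((a'-3\epsilon)_+)$ lies in the hereditary subalgebra generated by $\pi_{\overline U}((c-\delta)_+)$ on a closed neighbourhood $\overline U$ of $Y$, the element $z=f_1(c-\delta)_++f_2(a'-3\epsilon)_+$ satisfies, at every transition point $x$: it dominates one of its two summands (giving $(a'-3\epsilon)_+(x)\precsim z(x)$ via the fibrewise comparisons already arranged on $U$), and it lies in $\Her((c-\delta)_+(x))$, whence $z(x)\precsim(c-\delta)_+(x)\precsim(a'-3\epsilon')_+(x)$. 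No alignment of the two sections is ever performed. Your argument for (ii), deducing the axioms from surjectivity and the fact that the coordinate maps are Cu-morphisms, matches the paper and is fine --- but it rests on (i), which remains unproved in your write-up.
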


\begin{proof}
(i). By Remark \ref{rem:stable} and \cite[Theorem 3.9]{pedersenpullbacks} we may assume that $A$, $A(Y)$, and $B$ are stable. Let $a\in A$ and $b\in B$
be positive elements such that $\pi_Y(a)\sim \phi(b)$. Choose a positive element $c\in A$ such that $\pi_Y(c)=\phi(b)$. Then we have $\pi_Y(a)\sim \pi_Y(c)$.

Let $\epsilon>0$.
Since $\pi_Y(a)\precsim \pi_Y(c)$, by (iii) of Proposition \ref{lem:extror} there exists $0<\delta<\epsilon$ such that $\pi_Y((a-\epsilon)_+)\precsim \pi_Y((c-\delta)_+)$. Therefore, by the definition of the Cuntz order and since $\pi_Y$ is surjective
there exists $d\in A$ such that 
\[
\|\pi_Y(a-\epsilon)_+-\pi_Y(d)^*\pi_Y(c-\delta)_+\pi_Y(d)\|<\epsilon.
\]
In particular, in the fibre algebras $A(x)$, with $x\in Y$, we have 
\begin{align*}
\|(a-\epsilon)_+(x)-d(x)^*(c-\delta)_+(x)d(x)\|<\epsilon.
\end{align*}
By upper semicontinuity of the norm, there exists an open neighbourhood $U$ of $Y$ such that the inequality above holds for all $x\in U$. Since $X$ is normal there exists an open set $W$ such that $Y\subseteq W\subseteq \overline{W}\subseteq U$. Without loss of generality we may assume that $U=W$ and that 
\[
\|(a-\epsilon)_+(x)-d(x)^*(c-\delta)_+(x)d(x)\|<\epsilon,
\]
holds for all $x\in \overline{U}$. It follows now that
\begin{align*}
\|\pi_{\overline U}((a-\epsilon)_+-d^*(c-\delta)_+d)\|<\epsilon.
\end{align*}
By \cite[Lemma 2.2]{Kirchberg-Rordam} and since $\pi_{\overline{U}}$ is surjective, there exists $f\in A$ such that $\pi_{\overline{U}}((a-2\epsilon)_+)=\pi_{\overline{U}}(f^*(c-\delta)_+f)$. This implies that
\begin{align}
&\pi_{\overline{U}}((a-2\epsilon)_+)\precsim \pi_{\overline{U}}((c-\delta)_+),\label{triangulo}\\
&\pi_{\overline{U}}((a-3\epsilon)_+)=\pi_{\overline{U}}((f^*(c-\delta)_+f-\epsilon)_+)\label{cuadrado}.
\end{align}
Since $f^*(c-\delta)_+f\precsim (c-\delta)_+$, by (iv) of Proposition \ref{lem:extror} there exists a unitary $u\in A^\sim$ such that \begin{align}\label{circulo}
u^*(f^*(c-\delta)_+f-\epsilon)_+u\in \Her((c-\delta)_+).
\end{align}

Let us consider the element $a'=u^*au$. Then, equations \eqref{cuadrado} and \eqref{circulo} imply that
\begin{align*}
\pi_{\overline{U}}((a'-3\epsilon)_+)\in \pi_{\overline{U}}(\Her((c-\delta)_+))=\Her(\pi_{\overline{U}}((c-\delta)_+)).
\end{align*}
Hence, passing to the fibres we have
\begin{align}\label{a'1}
(a'-3\epsilon)_+(x)\in \Her((c-\delta)_+(x)),
\end{align}
for all $x\in U$. In addition, by \eqref{triangulo} we have
\begin{align}\label{a'2}
(a'-2\epsilon)_+(x)\precsim (c-\delta)_+(x),
\end{align}
for all $x\in U$.

Now let us use that $\pi_Y(c)\precsim\pi_Y(a)\sim \pi_{Y}(a')$. Arguing as above, there exists $g\in A$ such that 
\begin{align*}
\|c(x)-g(x)^*a'(x)g(x)\|<\delta'<\delta,
\end{align*}
for all $x$ in a closed neighbourhood $\overline V$ of $Y$. Without loss of generality, we may assume that $U=V$. 
Therefore,
\[
\|\pi_{\overline U}(c-g^*a'g)\|<\delta'<\delta\,.
\]
Hence, by \cite[Lemma 2.2]{Kirchberg-Rordam} we have $\pi_{\overline U}((c-\delta')_+)\precsim \pi_{\overline U} (a')$. This implies by (iii) of Proposition \ref{lem:extror} that $\pi_{\overline U}((c-\delta)_+)\precsim \pi_{\overline U}((a'-3\epsilon')_+)$, for some $\epsilon'<\epsilon$. Now passing to the fibres we have that
\begin{align}\label{a'3}
(c-\delta)_+(x)\precsim (a'-3\epsilon')_+(x),
\end{align}
for all $x\in U$.

Let $f_1, f_2\in \mathrm{C}(X)$ be a partition of unity associated to the open sets $U$ and $X\setminus Y$. Let us consider the element 
\[
z=f_1(c-\delta)_++f_2(a'-3\epsilon)_+. 
\]
Then, 
\begin{eqnarray}\label{zzzz}
\begin{aligned}
&z(x)=(c(x)-\delta)_+&\text{if }x\in Y,\\
&z(x)=(a'(x)-3\epsilon)_+&\text{if }x\in X\setminus U,\\
&z(x)\succsim (c-\delta)_+(x) \text{ or } z(x)\succsim (a'-3\epsilon)_+(x)&\text{if }x\in U,\\
&z(x)\in \Her((c(x)-\delta)_+))&\text{if }x\in U,
\end{aligned}
\end{eqnarray}
where the last equation follows by \eqref{a'1}.

By the choice of $c$ and the first equation in \eqref{zzzz} we have $z(x)=(c-\delta)_+(x)=(\phi(b)-\delta)(x)$, for all $x\in Y$.  Hence, $\pi_Y(z)=\phi((b-\delta)_+)$, and so $((b-\delta)_+,z)$ is an element of the pullback.
We also obtain by the first and third equation of \eqref{zzzz} and by \eqref{a'2} that $(a'-3\epsilon)_+(x)\precsim z(x)$, for all $x\in X$. In addition, by the first and last equation of \eqref{zzzz} and by \eqref{a'3} we obtain that $z(x)\precsim (a'-3\epsilon')_+(x)$, for all $x\in X$. Since by assumption the map $\alpha$ is an order embedding, this yields
\[
(a'-3\epsilon)_+\precsim z\precsim (a'-3\epsilon')_+.
\]

Therefore, we can choose sequences $\delta_n<\epsilon_n$ decreasing to zero and elements $z_n$ in $A_+$ such that $((b-\delta_n)_+, z_n)\in B\oplus_{A(Y)} A$, 
\[
((b-\delta_n)_+, z_n)\precsim ((b-\delta_{n+1})_+, z_{n+1}),
\]
for all $n$, $\sup_n[(b-\delta_n)_+]=[b]$, and $\sup_n[z_n]=[a']=[a]$.  Moreover, $([(b-\delta_n)_+],[z_n])$ is by construction rapidly increasing and $\sup_n([(b-\delta_n)_+],[z_n])=([b],[a])$. 
It also follows that 
\[
\beta(\sup_n([((b-\delta_n)_+,z_n)])=\sup_n\beta([((b-\delta_n)_+,z_n)])=\sup_n([(b-\delta_n)_+],[z_n])=([b],[a])\,,
\]
which proves that $\beta$ is surjective.

(ii). We need to show that $\Cu(B)\oplus_{\Cu(A(Y))}\Cu(A)$ satisfies the axioms of the category Cu, that is to say, (i), (ii), and (iii) of Theorem \ref{th:CEI}.  It was previously shown in the proof of the first part of the theorem that the pullback satisfies (ii). That the pullback satisfies the rest of the axiom follows easily using this fact and that $\Cu(\pi_Y)$ and $\Cu(\phi)$ are morphisms in the category Cu. 
\end{proof}

We now turn our attention to the $\CC(X)$-algebras of the form $\CC(X,A)$. In order to deal with general one-dimensional spaces, we will first analyse the case where the underlying space is a graph. These algebras can be conveniently described in pullback form, as follows (see, e.g. \cite[Section 3.1]{elp} combined with \cite[Theorem 3.8]{pedersenpullbacks}). As a directed graph, write $X=(V, E, r, s)$, where $V=\{v_1,\ldots,v_n\}$ is the set of vertices, $E=\{e_1,\ldots, e_m\}$ is the set of edges, and $r,s\colon E\to V$ are the range and source maps. For $1\leq k\leq m$, denote by $i_k\colon A\to A^m\oplus A^m$ the inclusion in the $k$th component of the first summand. Likewise, we may define $j_k\colon A\to A^m\oplus A^m$ for the second summand. Next, define
\[
\phi\colon \CC(V,A)\to A^m\oplus A^m
\]
by
\[
\phi(g)=\sum\limits_{l=1}^n\big(\sum\limits_{k\in s^{-1}(v_l)} i_k(g(v_l))+\sum\limits_{k\in r^{-1}(v_l)} j_k(g(v_l))\big)\,.
\]
Finally, let $\pi_{\{0,1\}}\colon \CC([0,1], A)\to \CC(\{0,1\},A)$ denote the quotient map.
Then $$\CC(X,A)\cong \CC([0,1], A^m)\oplus_{A^m\oplus A^m}\CC(V,A)$$ (where $A^m\oplus A^m$ is identified with $\CC(\{0,1\}, A^m)$ in the obvious manner).

\begin{theorem}
\label{thm:ese}
Let $X$ be a locally compact Hausdorff space that is second countable and one-dimensional. Let $A$ be a separable C$^*$-algebra with stable rank one such that 
$\mathrm{K}_1(I)=0$ for every closed two-sided ideal $I$ of $A$. Then, the map $\alpha\colon\Cu(\CC_0(X,A))\to \Lsc(X,\Cu(A))$ 
given by $\alpha([a])(x)=[a(x)]$, for all $a\in \CC_0(X,A)$ and $x\in X$,
is an isomorphism in the category Cu.
\end{theorem}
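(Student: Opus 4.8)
The strategy is to reduce the general one-dimensional space $X$ to the two cases already handled, namely the interval $[0,1]$ and finite graphs, and then to pass to the limit. First I would deal with the compact case, so assume $X$ is compact. The key structural input is the pullback presentation recorded just before the statement: for a finite graph $X=(V,E,r,s)$ one has
\[
\CC(X,A)\cong \CC([0,1],A^m)\oplus_{A^m\oplus A^m}\CC(V,A),
\]
where the right-hand map $\pi_{\{0,1\}}$ is surjective. I would combine the injectivity results of Section 3 with the surjectivity result to identify $\Cu$ of this pullback with the pullback of the Cuntz semigroups. Concretely, Theorem~\ref{th: C[0,1]-algebras} (applied to the $\CC([0,1])$-algebra $\CC([0,1],A^m)$, whose fibres are all $\cong A^m$ and hence inherit stable rank one and vanishing $\mathrm{K}_1$ of ideals from $A$) shows that $\alpha$ is an order embedding for the interval factor; Theorem~\ref{thm:injectivepullback1} then gives that $\beta$ is an order embedding for the pullback, and Theorem~\ref{thm:surjectivepullback} gives that $\beta$ is surjective and that the pullback semigroup lies in $\Cu$. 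Hence $\beta$ is an isomorphism in $\Cu$, and I would identify the pullback semigroup
\[
\Cu(\CC([0,1],A^m))\oplus_{\Cu(A^m\oplus A^m)}\Cu(\CC(V,A))
\]
with $\Lsc(X,\Cu(A))$, using Corollary~\ref{cor:intervalo} to rewrite the interval factor as $\Lsc([0,1],\Cu(A^m))=\Lsc([0,1],\Cu(A))^m$ and the finite factor as $\Cu(A)^{|V|}$; matching the gluing condition on the fibres over $\{0,1\}$ against the vertex data is exactly the lower-semicontinuity-across-the-graph condition defining $\Lsc(X,\Cu(A))$.

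\textbf{General compact one-dimensional $X$.} Since $X$ is second countable, compact, and of covering dimension at most one, it can be written as an inverse limit $X=\varprojlim X_k$ of finite graphs (one-dimensional simplicial complexes) with the bonding maps being the simplicial approximations; this is the standard fact that finite-dimensional compacta are inverse limits of finite complexes of the same dimension. This yields $\CC(X,A)=\varinjlim \CC(X_k,A)$ as an inductive limit of C$^*$-algebras. Applying the continuity of $\Cu$ (Theorem~\ref{th:CEI} and the functoriality recalled thereafter) gives
\[
\Cu(\CC(X,A))=\varinjlim_k \Cu(\CC(X_k,A)).
\]
By the graph case just established, each term is $\cong \Lsc(X_k,\Cu(A))$ in $\Cu$, naturally in $k$. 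On the other side, the contravariant functor $\Lsc(-,\Cu(A))$ is continuous by Proposition~\ref{prop:limits} (proved in Section~5), so $\varinjlim_k \Lsc(X_k,\Cu(A))\cong \Lsc(\varprojlim_k X_k,\Cu(A))=\Lsc(X,\Cu(A))$. Naturality of $\alpha$ in $X$ then lets me conclude $\alpha$ is an isomorphism in $\Cu$ for compact $X$.

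\textbf{Locally compact case and obstacles.} For noncompact locally compact second countable one-dimensional $X$ I would exhaust $X$ by an increasing sequence of compact one-dimensional subsets and pass to the direct limit again, or alternatively one-point compactify and use that $\CC_0(X,A)$ is an ideal of $\CC(X^+,A)$. The main obstacle I anticipate is not any single hard estimate—those have been isolated in Section~3—but rather the bookkeeping of the two identifications being genuinely natural: I must check that the isomorphism $\beta$ for the pullback is compatible with $\alpha$ (so that under the identifications $\beta$ literally becomes the fibrewise evaluation map $\alpha$), and that the inverse-limit decomposition of $X$ into graphs is chosen so that the bonding maps and the resulting maps of Cuntz semigroups commute with $\alpha$. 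Verifying that the pullback of the semigroups matches the lower semicontinuity condition defining $\Lsc(X,\Cu(A))$ over the vertices requires care, since lower semicontinuity at a vertex is precisely the compatibility of the incoming edge-limits with the vertex value; this is where the order-theoretic content of Definition~\ref{def:pw01} and the structure of $\Lsc$ proved in Section~5 must be invoked. Once these naturality checks are in place the continuity arguments are formal.
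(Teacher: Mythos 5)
Your proposal follows essentially the same route as the paper: Corollary \ref{cor:intervalo} for the interval, the pullback presentation of $\CC(X,A)$ for a finite graph $X$ combined with the injectivity and surjectivity results of Section 3 to identify $\Cu$ of the pullback with the semigroup pullback and hence with $\Lsc(X,\Cu(A))$, an inverse-limit-of-graphs argument using continuity of $\Cu$ and Proposition \ref{prop:limits}(i) for general compact $X$, and the one-point compactification for the locally compact case. The only cosmetic difference is that you invoke Theorem \ref{thm:injectivepullback1} (which indeed applies, since $A^m\oplus A^m$ is separable with stable rank one and vanishing $\mathrm{K}_1$ for ideals, and $\pi_{\{0,1\}}$ is surjective) where the paper cites Theorem \ref{thm:injectivepullback}; both give the required order-embedding of $\beta$.
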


\begin{proof}
By Corollary \ref{cor:intervalo}, the result holds when $X=[0,1]$. Now, let $X$ be a finite graph. By Theorems \ref{thm:surjectivepullback} and \ref{thm:injectivepullback} and the comments previous to this theorem
\begin{align*}
\Cu(\CC(X,A)) & \cong  \Cu(\CC([0,1], A^m))\oplus_{\Cu(A^m\oplus A^m)}\Cu(\CC(V, A))  \\ 
              &\cong  \Lsc([0,1], \Cu(A^m))\oplus_{\Cu(A^m\oplus A^m)}\Lsc(V, \Cu(A))\\
              &\cong \Lsc(X, \Cu(A)).
\end{align*}
Note that the isomorphism between $\Cu(\CC(X,A))$ and $\Lsc(X, \Cu(A))$ obtained above is given by the map $[a]\mapsto (x\in X\mapsto [a(x)])$.

Next, any compact Hausdorff space $X$ that is second countable and one-dimensional can be written 
as a projective limit $X=\varprojlim (X_i,\mu_{i,j})_{i,j\in\N}$, where $X_i$ are finite graphs and $\mu_{i,j}\colon X_j\to X_i$, with $i\le j$, are surjective maps (see \cite[pp. 153]{engelkin}). By \cite[Theorem 2]{CEI} this implies that 
\[
\Cu(\CC(X,A))=\varinjlim (\Cu(\CC(X_i,A)), \Cu(\rho_{i,j}))_{i,j\in \N}, 
\]
where $\rho_{i,j}\colon \CC(X_i)\to \CC(X_j)$, with $i\le j$, is the $\ast$-homomorphism induced by $\mu_{i,j}$, i.e. $\rho(f)=f\circ\mu_{i,j}$. In addition, by (i) of Proposition \ref{prop:limits} we have
\[
\Lsc(X,\Cu(A))=\varinjlim (\Lsc(X_i,\Cu(A)), \Lsc(\mu_{i,j}))_{i,j\in \N}, 
\]
where the maps $\Lsc(\mu_{i,j})\colon \Lsc(X_i, \Cu(A))\to \Lsc(X_j, \Cu(A))$ are given by $\Lsc(f)=f\circ\mu_{i,j}$.

Consider the following diagram:
\begin{align*}
 &\xymatrix@C=40pt{
        \Cu(\CC(X_1,A))\ar[r]^{\Cu(\rho_{1,2})}\ar[d]^\alpha&
        \Cu(\CC(X_2,A))\ar[r]^(0.65) {\Cu(\rho_{2,3})}\ar[d]^\alpha&
        \cdots \ar[r]&
        \Cu(\CC(X,A))\ar[d]^\alpha\\
        \Lsc(X_1,\Cu(A))\ar[r]^{\Lsc(\mu_{1,2})}&
        \Lsc(X_2,\Cu(A))\ar[r]^(0.65){\Lsc(\mu_{2,3})}&
        \cdots\ar[r]&
        \Lsc(X,\Cu(A))
        }
\end{align*}
This diagram is clearly commutative. Hence, since by the argument above the vertical arrows in the finite stages are  isomorphisms the map between the limit semigroups is an isomorphism. This proves the theorem in the case that $X$ is compact.  
 
Let $X$ is an arbitrary locally compact space. Then, applying the first part of the proof to its one-point compactification $\widetilde X=X\cup\{\infty\}$ we conclude that the map $\alpha\colon\Cu(\CC(\widetilde X,A))\to\Lsc(\widetilde X,\Cu(A))$ is an isomorphism in the category Cu. It is easy to check that the image by $\alpha$ of the order-ideal $\Cu(\CC_0(X,A))$ of $\Cu(\CC(\widetilde X,A))$ is $\{f\in\Lsc(\widetilde X,\Cu(A)\mid f(\infty)=0\}$. The latter is in turn order isomorphic to $\Lsc(X,\Cu(A))$ (via restriction). Thus, the result follows.
\end{proof}
\begin{corollary}\label{cor:aquel}
Let $X$ be a compact Hausdorff space that is second countable and one-dimensional. Let $A$ be a separable C$^*$-algebra with stable rank one such that $\mathrm{K}_1(I)=0$ for every closed two-sided ideal $I$ of $A$. Let $B$ be any C$^*$-algebra and suppose $\phi\colon B\to \CC(Y,A)$ is a $*$-homomorphism, where $Y\subseteq X$ is a closed subset of $X$. Then
\[
\Cu(B\oplus_{\CC(Y,A)} \CC(X,A))\cong \Cu(B)\oplus_{\Lsc(Y,\Cu(A))}\Lsc(X,\Cu(A)),
\]
in the category Cu.
\end{corollary}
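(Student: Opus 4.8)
The plan is to view $\CC(X,A)$ as a $\CC(X)$-algebra and then assemble the three pullback/computation results already at our disposal. Write $\mathfrak A:=\CC(X,A)$. Its fibre at $x$ is $\mathfrak A(x)\cong A$ and its quotient over the closed set $Y$ is $\mathfrak A(Y)\cong\CC(Y,A)$, so the C$^*$-algebra pullback associated with $\phi\colon B\to\CC(Y,A)$ and the restriction map $\pi_Y\colon\CC(X,A)\to\CC(Y,A)$ is precisely $B\oplus_{\mathfrak A(Y)}\mathfrak A=B\oplus_{\CC(Y,A)}\CC(X,A)$. Since $A$ is separable, has stable rank one, and satisfies $\mathrm K_1(I)=0$ for every closed two-sided ideal $I$, each fibre $\mathfrak A(x)\cong A$ satisfies the standing hypotheses on the fibres of Theorems \ref{thm:injectivepullback} and \ref{thm:surjectivepullback}.

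First I would apply Theorem \ref{thm:ese} to $X$, obtaining that $\alpha\colon\Cu(\CC(X,A))\to\Lsc(X,\Cu(A))$ is an isomorphism in Cu. Composing with the pointwise-order inclusion $\Lsc(X,\Cu(A))\hookrightarrow\prod_{x\in X}\Cu(A)$ shows that $\alpha\colon\Cu(\mathfrak A)\to\prod_{x\in X}\Cu(\mathfrak A(x))$ is an order embedding, which is exactly the hypothesis needed to invoke Theorem \ref{thm:surjectivepullback}. Since $Y$ is closed in the compact, second countable, one-dimensional space $X$, it is itself compact, second countable, and of dimension at most one; hence Theorem \ref{thm:ese} also applies to $Y$ and yields an isomorphism $\Cu(\CC(Y,A))\cong\Lsc(Y,\Cu(A))$ in Cu.

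The core of the argument is then short. Theorem \ref{thm:injectivepullback}, whose hypotheses hold because $X$ is one-dimensional and the fibres are as required, shows that $\beta$ is an order embedding; Theorem \ref{thm:surjectivepullback}(i), fed the order embedding just established, shows that $\beta$ is surjective, while part (ii) guarantees that the codomain lies in Cu. A surjective order embedding is an order isomorphism, and since $\beta$ preserves suprema of increasing sequences (as recorded after its definition), so does $\beta^{-1}$; thus $\beta$ is an isomorphism in Cu, giving $\Cu(B\oplus_{\CC(Y,A)}\CC(X,A))\cong\Cu(B)\oplus_{\Cu(\CC(Y,A))}\Cu(\CC(X,A))$.

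Finally I would translate the right-hand pullback into $\Lsc$ language. The computation $\alpha([a])(y)=[a(y)]=[(a|_Y)(y)]$ shows that the isomorphisms of Theorem \ref{thm:ese} intertwine $\Cu(\pi_Y)$ with the restriction map $f\mapsto f|_Y\colon\Lsc(X,\Cu(A))\to\Lsc(Y,\Cu(A))$, and $\Cu(\phi)$ with the correspondingly induced map $\Cu(B)\to\Lsc(Y,\Cu(A))$. Replacing the two algebras and the two legs of the cospan by these isomorphic data carries $\Cu(B)\oplus_{\Cu(\CC(Y,A))}\Cu(\CC(X,A))$ onto $\Cu(B)\oplus_{\Lsc(Y,\Cu(A))}\Lsc(X,\Cu(A))$, which finishes the proof. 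The whole argument is essentially an assembly of cited results; the only point requiring genuine care is confirming that the order-embedding hypothesis of Theorem \ref{thm:surjectivepullback} is indeed supplied by Theorem \ref{thm:ese} through the pointwise inclusion into $\prod_{x}\Cu(A)$, and that the two $\Lsc$-identifications are natural with respect to restriction so that the pullback is preserved.
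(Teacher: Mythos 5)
Your proposal is correct and follows exactly the route of the paper, whose entire proof reads ``Combine Theorems \ref{thm:injectivepullback}, \ref{thm:surjectivepullback} and \ref{thm:ese}''; you have simply spelled out the combination, including the (correct) observation that the order-embedding hypothesis of Theorem \ref{thm:surjectivepullback} is supplied by composing the isomorphism of Theorem \ref{thm:ese} with the pointwise-order inclusion into $\prod_{x}\Cu(A)$, and that the $\Lsc$-identifications intertwine $\Cu(\pi_Y)$ with restriction.
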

\begin{proof}
Combine Theorems \ref{thm:injectivepullback}, \ref{thm:surjectivepullback} and \ref{thm:ese}.
\end{proof}

\begin{corollary}\label{cor: pullbackdimtwo}
Let $X$ be a second countable, compact Hausdorff space of dimension at most two such that its second \v Cech cohomology group $\check{\mathrm{H}}^2(X,\Z)$ vanishes. Let $Y$ be a closed subspace of $X$ of dimension zero, let $A$ be a AF-algebra and let $B$ be an arbitrary C*-algebra. If $\pi\colon \mathrm{C}(X,A)\to \mathrm{C}(Y, A)$ is the quotient map and $\phi\colon B\to \mathrm{C}(Y, A)$ is a $\ast$-homomorphism, then
\begin{align*}
 \Cu(B\oplus_{\mathrm{C}(Y,A)}\mathrm{C}(X,A))\cong \Cu(B)\oplus_{\Lsc(Y, \Cu(A))}\Lsc(X,\Cu(A)),
\end{align*}
in the category $Cu$.
\end{corollary}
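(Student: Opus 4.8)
The plan is to run the same three-step argument used for Corollary \ref{cor:aquel}, but feeding in two-dimensional inputs in place of the one-dimensional ones. Writing $C:=\CC(Y,A)$, I will: (1) identify $\Cu(\CC(X,A))$ with $\Lsc(X,\Cu(A))$ and $\Cu(C)$ with $\Lsc(Y,\Cu(A))$ through the natural map $\alpha$; (2) use Theorem \ref{thm:injectivepullback1} to see that $\beta$ is an order-embedding; (3) use Theorem \ref{thm:surjectivepullback} to see that $\beta$ is surjective and that the target semigroup lies in $\Cu$; and (4) combine these to obtain the claimed isomorphism in $\Cu$. The two places where the hypotheses on $X$, $Y$ and $A$ enter are steps (1) and (2), and they enter for quite different reasons.

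For step (1), the point is that, because $A$ is an AF-algebra, the statement that $\alpha\colon\Cu(\CC(X,A))\to\Lsc(X,\Cu(A))$ is an isomorphism reduces to the commutative case. Writing $A=\varinjlim F_n$ with each $F_n$ finite-dimensional, the functor $\CC(X,-)=\CC(X)\otimes-$ preserves this inductive limit, and since both $\Cu$ (Theorem \ref{th:CEI}) and $\Lsc(X,-)$ (Proposition \ref{prop:limits}) are continuous, while $\alpha$ is a natural transformation compatible with the connecting maps, it suffices to treat $A=F$ finite-dimensional. Decomposing $F=\bigoplus_j\M_{k_j}$ and using that $\Cu$ is additive and matrix-stable, with $\Cu(\M_{k_j})=\Cu(\C)=\overline{\N}$, this reduces further to $A=\C$, i.e. to the assertion $\Cu(\CC(X))\cong\Lsc(X,\overline{\N})$. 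This last isomorphism is exactly the computation of the Cuntz semigroup of a commutative algebra over a space of dimension at most two (see \cite{leonel}), and it is here, and only here, that the assumptions $\dim X\le 2$ and $\check{\mathrm H}^2(X,\Z)=0$ are used: the vanishing of the second cohomology is what removes the vector-bundle (equivalently, $\mathrm K$-theoretic) obstruction, ensuring that every lower semicontinuous rank function is realized and that the Cuntz class is determined by it. The same reduction, or simply Theorem \ref{thm:ese} applied with $\dim Y=0\le 1$, gives the corresponding isomorphism for $\CC(Y,A)$.

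For step (2), I would observe that the role of the hypothesis $\dim Y=0$ is to make $C=\CC(Y,A)$ an AF-algebra: since $Y$ is second countable, compact and zero-dimensional, $\CC(Y)$ is AF, and hence $\CC(Y,A)=\CC(Y)\otimes A$ is again an inductive limit of finite-dimensional algebras. In particular $\CC(Y,A)$ is separable, has stable rank one, and satisfies $\mathrm K_1(I)=0$ for every closed two-sided ideal $I$. As the restriction map $\pi\colon\CC(X,A)\to\CC(Y,A)$ is surjective, Theorem \ref{thm:injectivepullback1} applies verbatim with its algebra $C$ taken to be $\CC(Y,A)$, and shows that $\beta$ is an order-embedding. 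Note that this is precisely why the burden of Theorem \ref{thm:injectivepullback1} falling entirely on $C$ (rather than on $\CC(X,A)$, which need not have stable rank one when $\dim X=2$) is what makes the argument work. For step (3), the order-embedding $\alpha\colon\Cu(\CC(X,A))\to\prod_{x\in X}\Cu(A)$ produced in step (1) is exactly the hypothesis of Theorem \ref{thm:surjectivepullback}, which then yields that $\beta$ is surjective and that $\Cu(B)\oplus_{\Cu(\CC(Y,A))}\Cu(\CC(X,A))$ belongs to $\Cu$.

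Finally, for step (4), $\beta$ is an order-embedding and surjective, hence an order-isomorphism between objects of $\Cu$, and as it preserves suprema it is an isomorphism in the category $\Cu$. By naturality of $\alpha$, the square relating $\Cu(\pi)$ to the restriction map $\Lsc(X,\Cu(A))\to\Lsc(Y,\Cu(A))$ commutes, so the isomorphisms of step (1) carry the semigroup pullback $\Cu(B)\oplus_{\Cu(\CC(Y,A))}\Cu(\CC(X,A))$ onto $\Cu(B)\oplus_{\Lsc(Y,\Cu(A))}\Lsc(X,\Cu(A))$, giving the stated result. The main obstacle is step (1): the whole statement rests on the commutative dimension-two computation, and the precise function of $\dim X\le 2$ together with $\check{\mathrm H}^2(X,\Z)=0$ is to guarantee that this computation holds, i.e. that no cohomological obstruction survives; the remaining steps are then a clean assembly of the pullback theorems already established.
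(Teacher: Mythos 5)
Your proposal is correct and follows essentially the same route as the paper: reduce the identification $\Cu(\CC(X,A))\cong\Lsc(X,\Cu(A))$ to the finite-dimensional (ultimately commutative) case via the AF inductive limit together with continuity of $\Cu$ and $\Lsc(X,-)$, invoking Robert's two-dimensional computation where $\dim X\le 2$ and $\check{\mathrm H}^2(X,\Z)=0$ enter; observe that $\CC(Y,A)$ is AF because $Y$ is zero-dimensional so that Theorem \ref{thm:injectivepullback1} gives the order-embedding; and conclude with Theorem \ref{thm:surjectivepullback}. The only differences from the paper's proof are harmless elaborations (the further reduction from finite-dimensional $F$ to $\C$, which the paper skips by citing \cite{leonel} directly for the finite stages).
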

\begin{proof}
Since $A$ is an AF-algebra, $A$ admits an inductive limit decomposition
\begin{align*}
 A_1\to A_2\to\cdots\to A,
\end{align*}
where the C*-algebras $A_i$, $i=1,2,\dots$, are finite dimensional. By \cite[Theorem 2]{CEI} and by (ii) of Proposition \ref{prop:limits} we have $\Cu(A)=\varinjlim \Cu(A_i)$ and $\Lsc(X,\Cu(A))=\varinjlim \Lsc(X, \Cu(A_i))$.

Consider the following commutative diagram:
\[
\xymatrix
{\Cu(\CC(X,A_1))\ar[d]\ar[r] & \Cu(\CC(X,A_2))\ar[d]\ar[r] & \dots\ar[r]  & \Cu(\CC(X,A))\ar[d]\\
\Lsc(X,\Cu(A_1))\ar[r] & \Lsc(X,\Cu(A_2))\ar[r] & \dots\ar[r]  & \Lsc(X,\Cu(A))}
\]
where the vertical arrows are given by the Cuntz semigroup morphisms induced by the rank function. These maps are isomorphisms by \cite[Theorem 1]{leonel}. Hence, they induce an isomorphism between the limit semigroups, that is, $\Cu(\mathrm{C}(X,A))\cong \Lsc(X,\Cu(A))$. 

Since $Y$ is compact and zero-dimensional, and $A$ is AF, we see that $\CC(Y,A)$ is AF. It thus follows that $\mathrm{K}_1(I)=0$ for any ideal $I$ of $\CC(Y,A)$. The corollary now follows from Theorems
\ref{thm:injectivepullback1} and \ref{thm:surjectivepullback}.
\end{proof}

\begin{corollary}
Let $X$ be a locally compact Hausdorff space that is second countable and one-dimensional. Let $A$ be a separable C$^*$-algebra with stable rank one such that 
$\mathrm{K}_1(I)=0$ for every closed two-sided ideal $I$ of $A$. Then, the semigroup $\Cu(\CC_0(X,A))$ is order-cancellative with respect to $\ll$.
\end{corollary}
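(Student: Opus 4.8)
The plan is to reduce the cancellation statement to the structural description of $\Cu(\CC_0(X,A))$ provided by Theorem \ref{thm:ese}. By that theorem, $\Cu(\CC_0(X,A))$ is isomorphic in the category $\Cu$ to $\Lsc(X,\Cu(A))$, and since order-isomorphisms preserve the relation $\ll$ (which is defined purely in terms of the order and suprema of increasing sequences), it suffices to prove that $\Lsc(X,\Cu(A))$ is order-cancellative with respect to $\ll$. Here ``order-cancellative with respect to $\ll$'' should mean: if $f,g,h\in\Lsc(X,\Cu(A))$ satisfy $f+h\ll g+h$, then $f\ll g$ (or the analogous statement $f+h\le g+h$ with $f\ll g$ hypotheses, depending on the paper's convention); I would first pin down the exact formulation and then work pointwise.

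The key observation I would exploit is that the order, addition, and suprema in $\Lsc(X,\Cu(A))$ are all defined pointwise, so the relation $\ll$ in this semigroup can be analysed fibrewise together with a topological openness condition. Thus the heart of the matter is a cancellation property in $\Cu(A)$ itself. First I would recall that $A$ has stable rank one, and invoke the known fact (from the work of Rørdam--Winter and Coward--Elliott--Ivanescu on stable rank one algebras) that $\Cu(A)$ is \emph{weakly cancellative}, i.e.\ $a+c\ll b+c$ implies $a\ll b$ for all $a,b,c\in\Cu(A)$. This is precisely the cancellation property for $\ll$ at the level of a single fibre, and it is available because stable rank one guarantees the relevant cancellation of Cuntz classes.

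Next I would lift this fibrewise cancellation to the functional level. Suppose $f+h\ll g+h$ in $\Lsc(X,\Cu(A))$. Using Theorem \ref{th:CEI}(ii) I would write $g+h=\sup_n k_n$ for a rapidly increasing sequence $(k_n)$ in $\Lsc(X,\Cu(A))$, and use compact containment to extract, for each $x$, a pointwise comparison; the delicate point is that $\ll$ in $\Lsc(X,\Cu(A))$ is strictly stronger than pointwise $\ll$, so I must pass through genuine suprema of increasing \emph{sequences of functions} rather than work fibre by fibre naively. The clean route is to take a rapidly increasing sequence $(g_n)$ with $g_n\ll g_{n+1}$ and $\sup_n g_n=g$; then $f+h\ll g+h=\sup_n(g_n+h)$ yields some $n_0$ with $f+h\le g_{n_0}+h$, and applying the fibrewise weak cancellation in $\Cu(A)$ pointwise gives $f(x)\le g_{n_0}(x)$ for every $x$, hence $f\le g_{n_0}\ll g$, so $f\ll g$.

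The main obstacle I anticipate is the interplay between the two roles of $\ll$: the relation $a+c\ll b+c$ at the fibre level is controlled by weak cancellation in $\Cu(A)$, but at the function level one must be careful that pointwise inequalities assemble into the required compact containment in $\Lsc(X,\Cu(A))$. Establishing that $f\le g_{n_0}$ pointwise suffices to conclude $f\ll g$ relies on $g_{n_0}\ll g$ in the functional semigroup, which is where the care in choosing the rapidly increasing sequence pays off. I would therefore organise the argument so that the only nontrivial semigroup-theoretic input is the weak cancellation of $\Cu(A)$ for stable rank one algebras, and everything else is a formal manipulation of suprema of increasing sequences inside the category $\Cu$, which is legitimate because $\Lsc(X,\Cu(A))$ lies in $\Cu$ by Theorem \ref{thm:lscinCu}.
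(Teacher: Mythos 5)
Your overall strategy coincides with the paper's: transport the problem to $\Lsc(X,\Cu(A))$ via Theorem \ref{thm:ese} and apply the stable-rank-one cancellation of \cite[Theorem 4.3]{Rordam-Winter} fibrewise. However, there is a genuine gap in your cancellation step. After passing from $f+h\ll g+h$ to $f+h\le g_{n_0}+h$, you cancel $h(x)$ from the pointwise inequality $f(x)+h(x)\le g_{n_0}(x)+h(x)$ to conclude $f(x)\le g_{n_0}(x)$. This is full cancellation of a plain $\le$ with the \emph{same} element on both sides, which fails even for $A=\C$: in $\Cu(\C)=\overline{\N}$ one has $1+\infty\le 0+\infty$ but $1\not\le 0$. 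The weak cancellation that stable rank one actually provides is of the form: if $x+z\le y+z'$ with $z'\ll z$, then $x\le y$ (equivalently, $x+z\ll y+z$ implies $x\le y$). Once you have replaced $g$ by $g_{n_0}$ you only retain $\le$, and the two copies of $h(x)$ are not separated by $\ll$, so the hypothesis of the cancellation theorem is not met.

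The repair is small and lands you essentially on the paper's proof. Approximate \emph{both} summands on the right: write $g=\sup_n g_n$ and $h=\sup_n h_n$ with $(g_n)$ and $(h_n)$ rapidly increasing, so that $g+h=\sup_n(g_n+h_n)$ and $f+h\le g_{n_0}+h_{n_0}$ for some $n_0$. By Proposition \ref{prop:charact}, $h_{n_0}\ll h$ yields $h_{n_0}(x)\ll h(x)$ for every $x$, so the fibrewise inequality $f(x)+h(x)\le g_{n_0}(x)+h_{n_0}(x)$ is now in the correct form for \cite[Theorem 4.3]{Rordam-Winter}, giving $f(x)\le g_{n_0}(x)$ for all $x$, hence $f\le g_{n_0}\ll g$ and $f\ll g$. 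The paper implements exactly this idea at the level of representatives: from $[a]+[b]\ll[a]+[c]$ it extracts $\epsilon>0$ with $[a]+[b]\le[(a-\epsilon)_+]+[(c-\epsilon)_+]$, and the compact containment $[(a(x)-\epsilon)_+]\ll[a(x)]$ plays precisely the role of $h_{n_0}(x)\ll h(x)$ in the fibrewise cancellation.
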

\begin{proof}
By Theorem \ref{thm:ese}, $\alpha\colon\Cu (\CC_0(X,A))\to\Lsc(X, \Cu(A))$ is an order-isomorphism. Let $[a], [b], [c]\in \Cu(\CC_0(X,A))$ be such that \[[a]+[b]\ll [a]+[c]\,.\] There exists then $\epsilon>0$ with $[a]+[b]\leq [(a-\epsilon)_+]+[(c-\epsilon)_+]$. Applying $\alpha$ we obtain \[[a(x)]+[b(x)]\leq [(a(x)-\epsilon)_+]+[(c(x)-\epsilon)_+]\] for all $x\in X$.  Using now \cite[Theorem 4.3]{Rordam-Winter}, we conclude that $[b(x)]\leq [(c(x)-\epsilon)_+]$ for all $x\in X$. Since $\alpha$ is an order-isomorphism, we get $[b]\leq [(c-\epsilon)_+]$, so that $[b]\ll [c]$, as desired.
\end{proof}
Recall that an element $a$ in an ordered semigroup is \emph{compact} 
if $a\ll a$. Compact elements in $\Cu(A)$ are strongly related to equivalence classes of projections (see, e.g. \cite{CEI} and \cite{natealin}). 

\begin{corollary}
Let $X$ be a locally compact Hausdorff space that is connected, second countable, and one-dimensional.
Let $A$ be a separable C$^*$-algebra with stable rank one such that $\mathrm{K}_1(I)=0$ for every closed two-sided ideal $I$ of $A$.
Then, an element $[f]\in \Cu(\CC_0(X,A))$ is compact if and only if there exists a compact element $a\in\Cu(A)$ such that $[f(t)]=a$, for all $t\in X$.
\end{corollary}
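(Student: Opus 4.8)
The plan is to transport everything across the isomorphism of Theorem~\ref{thm:ese}. Since $\alpha\colon \Cu(\CC_0(X,A))\to \Lsc(X,\Cu(A))$ is an isomorphism in the category $\Cu$, it preserves compact elements, and it sends $[f]$ to the function $t\mapsto [f(t)]$. Thus it suffices to prove that $g\in\Lsc(X,\Cu(A))$ satisfies $g\ll g$ if and only if $g$ is constant with value a compact element of $\Cu(A)$. We may assume $X$ is compact, reducing the locally compact case via the one-point compactification $\widetilde X=X\cup\{\infty\}$ as in the proof of Theorem~\ref{thm:ese}. Throughout I would use that suprema of increasing sequences in $\Lsc(X,\Cu(A))$ are computed pointwise: if $g=\sup_n h_n$ with $(h_n)$ increasing, then for each $t$ and each $a$ one has $a\ll g(t)$ iff $a\ll h_n(t)$ for some $n$ (the forward implication uses interpolation $a\ll a'\ll g(t)$), so that $g^{-1}(a^{\ll})=\bigcup_n h_n^{-1}(a^{\ll})$.

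For the ``if'' direction, suppose $g\equiv a$ with $a\ll a$, and let $(h_n)$ be increasing with $g\le\sup_n h_n$. Evaluating pointwise and using $a\ll a\le\sup_n h_n(t)$, I get $a\ll h_n(t)$, i.e. $a\le h_n(t)$, for some $n$ depending on $t$. The sets $U_n:=\{t\in X\mid a\ll h_n(t)\}=\{t\in X\mid a\le h_n(t)\}$ are then open by lower semicontinuity, increasing, and cover $X$; compactness of $X$ yields $X=U_{n_0}$ for some $n_0$, whence $g=a\le h_{n_0}$. Hence $g\ll g$.

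For the ``only if'' direction, assume $g\ll g$; the real content is that $g$ is constant, after which compactness of its value is easy. Writing $g$ as the supremum of a rapidly increasing sequence $(g_n)$ of piecewise characteristic functions as in Proposition~\ref{lem:supseq} (Definition~\ref{def:pw01} and its higher-dimensional analogue), compactness gives $g\le g_{n_0}$ and hence $g=g_{n_0}$; thus $g$ is itself of this special form, in particular finite-valued and locally constant off a lower-dimensional set, with the ``generic'' value on each open piece dominating the neighbouring values. The covering obstruction used in the ``if'' direction shows that a strictly larger generic value would destroy $g\ll g$ (just as a characteristic function supported on a proper open set fails to be compact), so all values coincide; connectedness of $X$ then forces $g\equiv a$ for a single $a\in\Cu(A)$. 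To see $a$ is compact I would test $g\ll g$ against constant functions: for any increasing $(a_k)$ in $\Cu(A)$ with $a\le\sup_k a_k$, the constants $\overline{a_k}$ increase to $\overline{\sup_k a_k}\ge g$, so $g\le\overline{a_{k_0}}$ for some $k_0$, i.e. $a\le a_{k_0}$; hence $a\ll a$.

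The main obstacle is the constancy step: making precise, for a general connected one-dimensional $X$ rather than just the interval, that a compact element of $\Lsc(X,\Cu(A))$ is locally constant with compact values and that the local decomposition forbids non-constant behaviour. On $[0,1]$ this is transparent from Definition~\ref{def:pw01}, on a finite graph it follows by the same covering argument applied edge by edge, and the general case is reached through the projective-limit presentation of $X$ employed in Theorem~\ref{thm:ese}, which is compatible both with $\alpha$ and with the passage to compact elements. I would also flag that compactness of the base is genuinely essential in the ``if'' direction, so that the role of connectedness is confined to forcing a single value.
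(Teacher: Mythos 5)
Your overall strategy (transport everything through the isomorphism $\alpha$ of Theorem~\ref{thm:ese} and reduce to showing that a compact element of $\Lsc(X,\Cu(A))$ is a constant function with compact value) is the same as the paper's, and your ``if'' direction is correct and complete (the paper does not even spell it out). However, the ``only if'' direction --- which is the actual content --- has a genuine gap that you yourself flag: you never establish local constancy. Your proposed route (write $g=\sup_n g_n$ with $g_n\in\chi(g)$, conclude $g=g_{n_0}$ is a piecewise characteristic function, then argue that a ``strictly larger generic value would destroy $g\ll g$'') leaves the crucial claim unproved, and your fallback of treating $[0,1]$, then finite graphs, then general $X$ via the projective-limit presentation would require additional verifications you do not supply (that compact elements of an inductive limit in Cu come from compact elements at finite stages, and that the graph approximants of a connected $X$ can be taken connected). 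As written, the argument does not close.

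The paper avoids all of this with a short direct argument valid for any compact metrizable $X$: fix $t$ and recall from the proof of Proposition~\ref{prop:charact} that $f=\sup_m f_m$, where $f_m$ equals $a_m$ on a closed neighbourhood $\overline{V_m}$ of $t$ (with $f(t)=\sup_m a_m$ rapidly increasing and $a_m\ll f(s)$ for $s\in\overline{V_m}$) and equals $f$ elsewhere. Since $f\ll f$, there is $m_0$ with $f\leq f_{m_0}$, so on $V_{m_0}$ one has $f(s)\leq a_{m_0}\ll f(s)$, i.e.\ $f$ is constant equal to the compact element $a_{m_0}=f(t)$ near $t$. Compactness of $X$ gives a finite cover by such neighbourhoods, two of which must be disjoint if their associated values differ, and connectedness then forces a single value. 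You would strengthen your write-up considerably by replacing your constancy step with this localisation argument; it needs no case analysis on $X$ and no passage to limits.
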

\begin{proof}
Upon identifying $\Cu(\CC_0(X,A))$ with $\Lsc (X,\Cu(A))$, which we may by Theorem \ref{thm:ese}, we assume that $f\in\Lsc(X,\Cu(A))$ is compact.

Let $t\in X$, and write $f(t)=\sup_{n} f_n(t)$ as in Proposition \ref{prop:charact} where the functions $f_n$ are constant in a neighbourhood $V_t$ of $t$. Then $f$ has a constant value $a_t$ in a neighbourhood of $t$.

Since $f\ll f$ implies $f(t)\ll f(t)$ for all $t\in X$, we have $a_t\ll a_t$. Further, since $X$ is compact, 
we can find a finite cover $(V_{t_i})_{i=1}^k$  with associated compact elements $a_{t_i}$. It is clear that $V_{t_i}\cap V_{t_j}=\varnothing$
if $a_{t_i}\neq a_{t_j}$, so using the connectedness of $X$, we find a unique value $a_{t_i}$, and so $f$ is constant.
\end{proof}

%%%%%%% Pullbacks and Examples
%%%%%%%%%%%%%%%%%%%%%%%%%%%%%%%%

\section{Examples}

We now give some examples of the computation of $\Cu(A)$ for certain C$^*$-algebras.

\subsection*{Recursive Sub-homogeneous algebras}

The class of Recursive Subhomogeneous Algebras (rsh-algebras) defined in \cite{rsh} is the smallest class $\mathcal R$
of C$^\ast$-algebras which contains $\CC(X,\M_{n})$ for all compact Hausdorff spaces $X$ and $n\geq 1$, and which is closed
under isomorphisms and pullbacks of the type

\begin{equation}\label{rsh}\xymatrix{&A\ar[d]^\varphi \\ \CC(X,\M_{n})\ar[r]^\rho & \CC(Y,\M_{n})}
\end{equation}
where $A$ is in $\mathcal R$, $\varphi$ is a unital $\ast$-homomorphism, $Y\subseteq X$ is a closed subspace of $X$ and $\rho$ is 
the restriction map.

If we restrict to the class of rsh-algebras $R$ constructed using compact Hausdorff spaces of dimension at most one, we can 
describe their Cuntz semigroup by an iterated use of Corollary \ref{cor:aquel} as:
\begin{multline*}\Cu(R)\cong \\ \left(\dots\left(\left(\Lsc(X_0,\overline\N)\oplus_{\Lsc(Y_1,\overline\N)}\Lsc(X_1,\overline\N)\right)\oplus_{\Lsc(Y_2,\overline\N)}\Lsc(X_2,\overline\N)
 \right)\dots\right)\oplus_{\Lsc(Y_k,\overline\N)}\Lsc(X_k,\overline\N),
\end{multline*}
where $X_i$ are second countable compact Hausdorff spaces of dimension at most one
and $Y_i\subseteq X_i$ are closed subsets. Note that $\overline \N=\N\cup\{\infty\}$ can be naturally 
identified with the Cuntz semigroup of $\M_n$.

\subsection*{Non-commutative CW-complexes}

Noncommutative CW-complexes introduced by Eilers, Loring and Pedersen in \cite{elp} are a particular case of rsh-algebras. 
A one-dimensional NCCW-complex is the resulting C$^*$-algebra pullback of the following diagram
\begin{equation}
\xymatrix{ & E\ar[d]^\varphi \\ \CC([0,1],F)\ar[r]^\rho & F\oplus F}
\end{equation}
where $E,F$ are finite dimensional C$^*$-algebras, $\varphi$ is an arbitrary $\ast$-homomorphism, and $\rho$ is given by evaluation at $0$ and $1$.
One-dimensional NCCW-complexes cover a large amount of 
C$^\ast$-algebras, including dimension drop algebras, spitting interval algebras, and the building blocks used in the classification of one-parameter continuous fields of AF-algebras  (see \cite{elliott-dadarlat-zhuang}). The classification of inductive limits of one dimensional NCCW-complexes with trivial $\mathrm{K}_1$-group was carried out in \cite{robertnccw} using the functor $\Cu^\sim$ which is related to the functor $\Cu$.

Using Corollary \ref{cor:aquel}, the Cuntz semigroup of a one dimensional NCCW-complex can be computed as the induced 
pullback of ordered semigroups in $\Cu$. We identify the Cuntz semigroup of a finite dimensional C$^*$-algebra with $\overline\N^k$
for some $k$. Since $\varphi\colon E\to F\oplus F$ is any C$^*$-algebra map, we obtain a semigroup map $\Cu(\varphi)\colon \overline\N^r \to \overline\N^{2s}$ which is thus described by a matrix $A\in \M_{2s,r}(\N)$.
Now the map $\Cu(\rho)\colon \Cu(\CC([0,1],F))\to \Cu(F\oplus F)$ is given by evaluation at $0$ and $1$ of lower semicontinuous
functions $f\colon [0,1]\to \overline{\N}^{2s}$. Therefore, the ordered semigroup pullback is isomorphic to 
\[\{(f,b)\in\Lsc([0,1],\overline\N^s)\oplus \overline{\N}^r | \ (f(0),f(1))^t=Ab\}, \] 
which is thus completely determined by the matrix $A$.

\subsection*{Dimension drop algebras over the interval}

Dimension drop algebras are a particular case of non commutative CW-complexes. In fact we will consider a slightly more
general case since we need not restrict to finite dimensional algebras.  

Given two positive integers $p,q$ the dimension
drop algebra is defined as 
\[ Z_{p,q}=\{ f\in \CC([0,1],\M_{p}(\C)\otimes \M_{q}(\C)\mid f(0)\in
\M_{p}(\C)\otimes I_q,\ f(1)\in I_p\otimes \M_q(\C)\}.\]
and can be described as the pullback of the following diagram
\[
\xymatrix{ & \M_p(\C)\oplus \M_q(\C)\ar[d]^\phi \\ 
\CC([0,1],\M_p(\C)\otimes \M_q(\C))\ar[r]^{\ \ \ (\lambda_0,\lambda_1)} & (\M_{p}(\C)\otimes
\M_q(\C))^2 
} 
\]
where $\lambda_i(f)=f(i)$ and $\phi(A,B)=(A\otimes I_q,I_p\otimes B)$.

Identifying $\Cu(\M_r(\C))$ with $\frac{1}{r}\overline\N$ we obtain by Corollary \ref{cor:aquel} 
\[\Cu(Z_{pq})\cong\{ f\in \Lsc([0,1],\frac{1}{pq}\overline\N) \mid \ f(0)\in \frac{1}{p}\overline\N,\  f(1)\in \frac{1}{q}\overline\N\}.\] 

In case $p,q$ are coprime, $Z_{pq}$ is called a prime dimension drop algebra. 
The Jiang-Su algebra $\mathcal Z$ is constructed as an inductive limit of diferent prime dimension drop algebras $Z_{p_nq_n}$
which is simple and has a unique trace. This construction can be slightly simplified using a unique dimension drop algebra
$Z_{pq}$ and a unique morphism $\gamma:Z_{pq}\to Z_{pq}$ but alowing $p,q$ to 
be supernatural numbers of infinite type,
that is to say, $p_i^\infty=p_i$ (see \cite{Rordam-Winter}). The construction for $Z_{pq}$ can also be done using a pullback as before but now
$\M_p(\C)$ should be replaced by the corresponding UHF-algebra.  The Cuntz semigroup of these UHF-algebras can be 
computed using the description given in e.g. \cite{bpt} as 
\begin{align}\label{eq: Cp}
C_p:=\Cu(\lim_{\rightarrow} (\M_n(\C); n\mid p)):= \R^{++}\sqcup \bigcup_{n\mid p}\frac{1}{n}\N \sqcup \{\infty\}.
\end{align}
Hence, observing that $C_p,C_q\subseteq C_{pq}$, we have
\[\Cu(Z_{pq})\cong\{ f\in \Lsc([0,1],C_{pq}) \mid
\ f(0)\in C_p,\  f(1)\in C_q\}.\] 

\subsection*{Dimension drops algebras over a two dimensional space}
Let $X$ be compact Hausdorff space of dimension two such that $\check{\mathrm{H}}^2(X,\Z)=0$, 
and  let $x_1, x_2, \cdots, x_n\in X$. Given supernatural numbers $p_1,p_2,\cdots,p_n$ of infinite type, let us consider the dimension drop algebra
\begin{multline*}
Z_{p_1,p_2,\cdots, p_n}=  \\ \{f\in \mathrm{C}(X,\bigotimes_{i=1}^n\M_{p_i})\mid  f(x_i)\in I_{p_1}\otimes \cdots \otimes I_{p_{i-1}}\otimes \M_{p_i}\otimes I_{p_{i+1}}\otimes \cdots\otimes I_{p_n},\, i=1,2,\cdots, n\}.
\end{multline*} 
This algebra can be described as the pullback of the following diagram:
\[
\xymatrix{ & \bigoplus_{i=1}^n \M_{p_i}\ar[d]^\phi \\ 
\mathrm{C}(X,\bigotimes_{i=1}^n\M_{p_i})\ar[r]^{\lambda\ \ \ \ \ } & \Lsc(Y,\bigotimes_{i=1}^n\M_{p_i} 
)} 
\]
where $Y=\{x_1,x_2,\cdots,x_n\}$, $\lambda(f)=f|_Y$, and 
\[
(\phi(A_1,A_2,\cdots, A_n))_i=I_{p_1}\otimes \cdots \otimes I_{p_{i-1}}\otimes A_i\otimes I_{p_{i+1}}\otimes \cdots\otimes I_{p_n},
\]
for $i=1,2,\cdots,n$. By Corollary \ref{cor: pullbackdimtwo}
\begin{align*}
\Cu(Z_{p_1,p_2,\cdots, p_n})\cong (\bigoplus_{i=1}^nC_{p_i})\oplus_{\Lsc(Y ,C_{p_1p_2\cdots p_n})}\Lsc(X,C_{p_1p_2\cdots p_n}).
\end{align*}
Hence, we have
\begin{align*}
\Cu(Z_{p_1,p_2,\cdots, p_n})\cong\{ f\in \Lsc(X,C_{p_1p_2\cdots p_n}) \mid
\ f(x_i)\in C_{p_i},\,i=1,2,\cdots, n\},
\end{align*}
where $C_p$ is as in \eqref{eq: Cp}. 

\subsection*{Mapping torus of A}

Let $A$ be a C$^*$-algebra and $\phi\colon A\to A$ an automorphism. The mapping torus of the pair $(A,\phi)$  is defined by 
\[ T_{\phi}(A)=\{f\in \CC([0,1],A)\mid f(1)=\phi(f(0))\}\,.\]
Observe that $T_{\phi}(A)$ can be obtained as the pullback in the following diagram
\[
 \xymatrix{T_{\phi}(A)\ar[r]\ar[d] & A\ar[d]^{(\textrm{id},\phi)} \\ \CC([0,1],A)\ar[r]^>>>>>{\rho_{\{0,1\}}} & A\oplus A 
}
\]
Therefore, if $A$ has stable rank one and $K_1(I)=0$ for every ideal in $A$, using Corollary~\ref{cor:aquel} we obtain 
\[ \Cu(T_{\phi}(A))\cong \{f\in\Lsc([0,1],\Cu(A))\mid f(1)=\Cu(\phi)(f(0))\}\,.\]

%%%%%%% Semigroups of Lower Semicontinuous Functions
%%%%%%%%%%%%%%%%%%%%%%%%%%%%%%%%%%%%%%%%%%%%%%%%%%%%%

\section{Semigroups of lower semicontinuous functions}
\label{lsc}

Our aim in this section is to prove that
the set $\Lsc(X,M)$ of lower semicontinuous functions from a compact Hausdorff
space $X$ with finite covering dimension to a countably based semigroup $M$ in Cu, equipped with the
pointwise order and addition, is also a semigroup in Cu. Throughout this section, $X$ will always denote a topological space that is second countable,
compact, and Hausdorff, whence metrizable.

% The continuity of this object has been studied extensively (see, e.g. \cite{scottetal}), mostly centering
% attention on the topology of the range space, and without considering the underlying semigroup structure.

The following lemmas are easy to prove and hence we omit the details.

\begin{lemma}
\label{lem:easy}
If $M$ is a semigroup in Cu, then $f\in\Lsc(X,M)$ if and only if, given $t\in X$ and $a\ll f(t)$,  there exists an open
neighbourhood $U_t$ of $t$ such that $a\ll f(s)$, for all $s\in U_t$.
\end{lemma}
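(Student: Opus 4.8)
The plan is to simply unwind the definition of $\Lsc(X,M)$, which asserts that the set $f^{-1}(a^{\ll})=\{s\in X \mid a\ll f(s)\}$ is open in $X$ for every $a\in M$. The stated equivalence is then just the standard topological fact that a set is open if and only if it is a neighbourhood of each of its points, read off at the level of the individual sets $f^{-1}(a^{\ll})$. I expect no genuine obstacle here; the only point requiring a moment's care is to match the quantifiers correctly, since in the local condition one quantifies only over $a\ll f(t)$ rather than over all $a\in M$, but this is precisely the range of $a$ relevant to membership in $f^{-1}(a^{\ll})$.

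For the forward implication, I would assume $f\in\Lsc(X,M)$ and fix $t\in X$ together with some $a\ll f(t)$. Then $t$ lies in $f^{-1}(a^{\ll})$, which is open by hypothesis, so this set itself may be taken as the desired neighbourhood $U_t$; by construction $a\ll f(s)$ for every $s\in U_t$.

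For the reverse implication, I would assume the local condition and fix an arbitrary $a\in M$, aiming to show $f^{-1}(a^{\ll})$ is open. Given any $t\in f^{-1}(a^{\ll})$ we have $a\ll f(t)$, so the hypothesis (applied to this particular $t$ and this $a$) furnishes an open neighbourhood $U_t$ of $t$ with $a\ll f(s)$ for all $s\in U_t$, that is, $U_t\subseteq f^{-1}(a^{\ll})$. Since every point of $f^{-1}(a^{\ll})$ admits such a neighbourhood contained in the set, the set is open, and as $a$ was arbitrary this establishes $f\in\Lsc(X,M)$. Both directions are immediate once the definition is expanded, so the proof reduces to this bookkeeping of the openness condition.
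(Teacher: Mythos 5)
Your proof is correct and is exactly the routine unwinding of the definition that the paper has in mind; the authors omit the proof entirely, noting only that the lemma is easy. Both directions are handled properly, including the quantifier bookkeeping, so there is nothing to add.
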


\begin{lemma}\label{lem:easy2}
 Let $f\in \Lsc(X,M)$ and $Y\subseteq X$ a closed set. Then,
 \begin{enumerate}[{\rm (i)}]
 \item The restriction $f|_{Y}$ of $f$ to $Y$ is a function in $\Lsc(Y,M)$.
 \item If $g\in \Lsc(Y,M)$ and $g\leq f|_Y$, then 
\[f_{\downarrow g}(t):=\left\{\begin{array}{rl} g(t) & \text{ if } t\in Y \\ f(t) & \text{ otherwise. }\end{array}\right.\]
is a function in $\Lsc(X,M)$
\end{enumerate}
\end{lemma}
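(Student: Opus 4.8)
The plan is to verify both parts straight from the definition of lower semicontinuity, passing freely to the neighbourhood reformulation of Lemma \ref{lem:easy} whenever it is convenient.

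For part (i), I would simply note that for each $a\in M$ one has
\[
(f|_Y)^{-1}(a^{\ll})=\{t\in Y\mid a\ll f(t)\}=Y\cap f^{-1}(a^{\ll}),
\]
and since $f^{-1}(a^{\ll})$ is open in $X$ by hypothesis, this set is relatively open in $Y$. Hence $f|_Y\in\Lsc(Y,M)$. This argument uses nothing about $Y$ beyond its being a subspace; closedness enters only in part (ii).

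For part (ii), I would use the reformulation in Lemma \ref{lem:easy}: given $t\in X$ and $a\ll f_{\downarrow g}(t)$, I must produce an open neighbourhood $U_t$ of $t$ on which $a\ll f_{\downarrow g}$. Split into two cases. If $t\notin Y$, then $f_{\downarrow g}(t)=f(t)$; lower semicontinuity of $f$ gives an open $U\ni t$ with $a\ll f(s)$ for all $s\in U$, and because $Y$ is closed I may shrink to $U_t:=U\cap(X\setminus Y)$, on which $f_{\downarrow g}$ agrees with $f$ so the inequality persists. If instead $t\in Y$, then $f_{\downarrow g}(t)=g(t)$; lower semicontinuity of $g$ on $Y$ yields a relatively open set $V=Y\cap W$ (with $W$ open in $X$) on which $a\ll g$, while the hypothesis $g\leq f|_Y$ together with the elementary implication that $a\ll b\leq c$ forces $a\ll c$ gives $a\ll f(t)$, so lower semicontinuity of $f$ supplies an open $U\ni t$ with $a\ll f$ on $U$. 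Setting $U_t:=W\cap U$, I would check that on $Y\cap U_t\subseteq V$ we have $a\ll g=f_{\downarrow g}$, and on $U_t\setminus Y\subseteq U$ we have $a\ll f=f_{\downarrow g}$, as required.

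The only genuinely substantive point, and the step I would flag as the crux, is the boundary case $t\in Y$: here the glued value $g(t)$ may drop strictly below $f(t)$, so one cannot control $f_{\downarrow g}$ at nearby points of $X\setminus Y$ from $g$ alone. The resolution is to route the hypothesis $g\leq f|_Y$ through the implication $a\ll b\leq c\Rightarrow a\ll c$, which lets the value $f(t)$ govern those off-$Y$ points. Everything else is bookkeeping with finite intersections of open sets, and the closedness of $Y$ is precisely what makes $X\setminus Y$ available as an open set in the first case.
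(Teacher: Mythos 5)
Your proof is correct; the paper in fact omits the proof of this lemma as routine, and your argument (relative openness of $Y\cap f^{-1}(a^{\ll})$ for (i), and the two-case gluing for (ii) using openness of $X\setminus Y$ and the implication $a\ll b\leq c\Rightarrow a\ll c$ at points of $Y$) is exactly the expected one. You correctly identify the only point of substance, namely controlling $f_{\downarrow g}$ at nearby points of $X\setminus Y$ when $t\in Y$, which is handled by passing from $a\ll g(t)\leq f(t)$ to $a\ll f(t)$.
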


A source of examples of these functions is obtained by the action of the characteristic functions of open subsets of $X$ in $\Lsc(X,M)$. This action is described as follows: given an open set $U\subseteq X$ and a function $f\in\Lsc(X,M)$, $f\cdot \chi_{U}$ is defined by
\begin{align*}
(f\cdot \chi_{U})(x):=
\begin{cases}
f(x)&\text{if }x\in U,\\
0&\text{otherwise}
\end{cases}.
\end{align*}
This function belongs to $\Lsc(X,M)$ by Lemma \ref{lem:easy2}.

\begin{remark}
\label{rem:tontada}
{\rm If $M$ is a semigroup in $\Cu$, and $(a_n)$ is an increasing sequence with $a\ll\sup_n a_n$, then there exists $m$ such that $a\ll a_m$. Indeed, write $\sup_n a_n=\sup_k b_k$, for a rapidly increasing sequence $(b_k)$ in $M$, and find $k$ such that $a\leq b_k$. As $b_{k+1}\leq a_m$ for some $m$, this yields $a\leq b_k\ll b_{k+1}\leq a_m$, so $a\ll a_m$.}
\end{remark}

\begin{lemma} 
\label{lem:estructuramonoide}
Let $M$ be a semigroup in Cu. Then: 
\begin{enumerate}[{\rm (i)}]
\item $\Lsc(X,M)$ endowed with the pointwise addition and order is an ordered semigroup.
\item $\Lsc(X,M)$ is closed under (pointwise) suprema of increasing sequences.
\end{enumerate}
\end{lemma}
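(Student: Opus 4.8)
The plan is to verify both parts directly from the pointwise structure, using the characterisation of lower semicontinuity in Lemma \ref{lem:easy} together with the Cu-axioms for $M$ recorded in Theorem \ref{th:CEI}. The algebraic identities (associativity, commutativity, and the zero function as neutral element) and the order-theoretic properties (reflexivity, antisymmetry, transitivity, and compatibility of $+$ with $\leq$) all hold pointwise, inherited verbatim from $M$; so the only genuine content of (i) is that $\Lsc(X,M)$ is closed under addition, namely that $f+g$ is lower semicontinuous whenever $f,g$ are. I would also record at the outset the elementary fact that in any $M$ in Cu one has $a\ll c$ whenever $a\leq b\ll c$ or $a\ll b\leq c$; both implications are immediate from the definition of $\ll$.

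To check $f+g\in\Lsc(X,M)$ via Lemma \ref{lem:easy}, fix $t\in X$ and $a\ll (f+g)(t)=f(t)+g(t)$. The difficulty is that $\ll$ need not split across a sum, so one cannot directly write $a=a'+a''$ with $a'\ll f(t)$ and $a''\ll g(t)$. To get around this I would write $f(t)=\sup_n a_n$ and $g(t)=\sup_n b_n$ as suprema of rapidly increasing sequences (Theorem \ref{th:CEI}(ii)). Since addition is compatible with suprema (Theorem \ref{th:CEI}(iii)), $f(t)+g(t)=\sup_n(a_n+b_n)$, and as $a\ll\sup_n(a_n+b_n)$ there is an $n$ with $a\leq a_n+b_n$. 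Now $a_n\ll f(t)$ and $b_n\ll g(t)$, so by Lemma \ref{lem:easy} there are neighbourhoods $U_1,U_2$ of $t$ with $a_n\ll f(s)$ on $U_1$ and $b_n\ll g(s)$ on $U_2$; on $U_1\cap U_2$ the compatibility of $\ll$ with addition (Theorem \ref{th:CEI}(iii)) gives $a_n+b_n\ll f(s)+g(s)$, whence $a\leq a_n+b_n\ll (f+g)(s)$. Thus $a\ll (f+g)(s)$ on a neighbourhood of $t$, and Lemma \ref{lem:easy} yields lower semicontinuity.

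For (ii), given an increasing sequence $(f_n)$ in $\Lsc(X,M)$ I would define $f$ pointwise by $f(t):=\sup_n f_n(t)$, these suprema existing because $M$ is closed under suprema of increasing sequences (Theorem \ref{th:CEI}(i)). That $f$ is the least upper bound in the pointwise order is immediate, so it remains only to see $f\in\Lsc(X,M)$. To check lower semicontinuity, fix $t$ and $a\ll f(t)=\sup_n f_n(t)$. Since $(f_n(t))$ is increasing, Remark \ref{rem:tontada} provides an $m$ with $a\ll f_m(t)$; applying Lemma \ref{lem:easy} to $f_m$ gives a neighbourhood $U_t$ with $a\ll f_m(s)$ for all $s\in U_t$, and since $f_m(s)\leq f(s)$ we conclude $a\ll f(s)$ on $U_t$. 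By Lemma \ref{lem:easy}, $f$ is lower semicontinuous.

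The only step that is not completely formal is the lower semicontinuity of the sum in (i), and the main obstacle there is precisely that $\ll$ does not pass termwise through a sum; this forces the detour through rapidly increasing approximations and the compatibility axiom, rather than a naive splitting of $a$. Everything else reduces to the pointwise behaviour of $M$.
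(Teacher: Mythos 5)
Your proof is correct and follows essentially the same route as the paper: for (i) both arguments write $f(t)$ and $g(t)$ as suprema of rapidly increasing sequences, use compatibility of $+$ with suprema and with $\ll$ to produce a single element $a_n+b_n$ dominating $a$ and compactly contained in $(f+g)(s)$ on a neighbourhood, and for (ii) both invoke Remark \ref{rem:tontada} to drop from the supremum to a single $f_m$ and then use its lower semicontinuity. The only cosmetic difference is that the paper extracts $a\ll f_N+g_N$ directly from the rapidly increasing sequence, whereas you settle for $a\leq a_n+b_n$ and upgrade to $\ll$ at the last step; both are valid.
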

\begin{proof}
(i). Let $a\in M$ and $t\in (f+g)^{-1}(a^{\ll})$. Let us write $f(t)=\sup_n f_n$ and $g(t)=\sup_n g_n$ where $(f_n),(g_n)$ are rapidly increasing sequences. Then $(f+g)(t)=f(t)+g(t)=\sup_n (f_n+g_n)$. Since $a\ll (f+g)(t)=f(t)+g(t)=\sup_n (f_n+g_n)$, there exists $N\geq 0$ such that $a\ll f_N+g_N$. Next, since $f_N\ll f(t), g_N\ll g(t)$  and $f,g\in \Lsc(X,M)$, there are open neighbourhoods $U_t$ and $V_t$ of $t$ such that $f_N\ll f(s)$ if $s\in U_t$ and $g_N\ll g(s) $ if $s\in V_t$. Hence, $W_t=U_t\cap V_t$ is an open neighbourhood of $t$, and clearly
\[  
a\ll f_N+g_N\ll f(s)+g(s)=(f+g)(s),
\]
for all $s\in W_t$.
Therefore $W_t\subseteq (f+g)^{-1}(a^{\ll})$, which proves that $(f+g)^{-1}(a^{\ll})$ is open, whence $(f+g)\in \Lsc(X,M)$.

(ii). Let $(f_n)$ be an increasing sequence in $\Lsc(X,M)$, and put $f(t):=\sup_nf_n(t)$. Since $M$ is closed under suprema of increasing sequences, $f$ exists. For any $t\in X$, and $a\ll f(t)=\sup_nf_n(t)$, there exists by Remark \ref{rem:tontada} a number $N$ such that $a\ll f_N(t)$. Lower semicontinuity of $f_N$ now provides a neighbourhood $U_t$ such that $a\ll f_N(s)\leq f(s)$ for all $s\in U_t$. Therefore $f\in \Lsc(X,M)$
and clearly $f=\sup_n f_n$.
\end{proof}

The following proposition provides a characterization of compact containment in these function spaces.

% is a difficult problem (cf. \cite[Propositions I-1.22.1 and  II-4.20]{scottetal}). 

\begin{proposition}
\label{prop:charact}
Let $M$ be a semigroup in Cu. Given  $f,g\in \Lsc(X,M)$ we have $g\ll f$ if and only if 
for every $t\in X$ there are an open neighbourhood $U_t$ of $t$, and  $c_t\in M$ such that $g(s)\leq c_t\ll f(s)$ for all $s\in U_t$.
 \end{proposition}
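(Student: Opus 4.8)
The plan is to prove both implications of the characterization of compact containment in $\Lsc(X,M)$.

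I will first establish the easier direction: assume that for every $t \in X$ there exist an open neighbourhood $U_t$ and an element $c_t \in M$ with $g(s) \le c_t \ll f(s)$ for all $s \in U_t$, and deduce $g \ll f$. Suppose $f \le \sup_n h_n$ for an increasing sequence $(h_n)$ in $\Lsc(X,M)$ with supremum in $\Lsc(X,M)$. Fix $t \in X$. Since $c_t \ll f(t) \le \sup_n h_n(t)$, Remark~\ref{rem:tontada} yields an index $n_t$ with $c_t \le h_{n_t}(t)$; moreover $c_t \ll f(s)$ for $s \in U_t$, so by lower semicontinuity of $h_{n_t}$ (applied to any $c_t' \ll c_t$, or directly via Lemma~\ref{lem:easy}) there is a possibly smaller neighbourhood $V_t \subseteq U_t$ of $t$ on which $c_t \le h_{n_t}(s)$. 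By compactness of $X$, finitely many such $V_{t_1}, \dots, V_{t_k}$ cover $X$; set $N = \max\{n_{t_1}, \dots, n_{t_k}\}$. Then for every $s \in X$, choosing $i$ with $s \in V_{t_i}$, we obtain $g(s) \le c_{t_i} \le h_{n_{t_i}}(s) \le h_N(s)$. Hence $g \le h_N$ pointwise, i.e.\ $g \le h_N$ in $\Lsc(X,M)$, which proves $g \ll f$.

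For the converse I assume $g \ll f$ and must produce, for each $t$, a neighbourhood $U_t$ and an element $c_t$ with $g(s) \le c_t \ll f(s)$ on $U_t$. The strategy is to approximate $f$ from below by an increasing sequence of functions built from local data and apply the defining property of $\ll$. Concretely, I would fix a countable base $(b_m)$ for $M$ coming from the countably based hypothesis, and for each $m$ and each rational-indexed basic open set consider the functions obtained as suprema of terms $b_m \cdot \chi_W$ over open sets $W$ on which $b_m \ll f$; using the action of characteristic functions described before Remark~\ref{rem:tontada} and Lemma~\ref{lem:estructuramonoide}, these lie in $\Lsc(X,M)$ and can be arranged into an increasing sequence $(f_n)$ with $\sup_n f_n = f$ pointwise (each value $f(t)$ is the supremum of the basic elements compactly below it, realized on a neighbourhood by lower semicontinuity). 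Applying $g \ll f$ to this sequence produces an $N$ with $g \le f_N$, and the value $f_N(t)$ at each point is a supremum of finitely many basic elements, each dominated by and compactly contained in $f$ on a neighbourhood; intersecting these finitely many neighbourhoods and taking $c_t$ to be the relevant finite sum (or maximum) of basic elements gives the desired local domination.

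The main obstacle is the construction in the converse direction of a genuinely increasing sequence $(f_n)$ in $\Lsc(X,M)$ with pointwise supremum $f$ whose members have sufficiently simple (locally finite, basic-element-valued) structure, since the monotonicity of the sequence must be compatible with the local character of lower semicontinuity and with the order structure of $M$. I expect this to be where the countably based hypothesis and the interplay between the $\omega$-Scott topology on $M$ and the topology of $X$ are essential; the clean bookkeeping is to index by a countable base of $X$ and a countable base of $M$ simultaneously, form the relevant joins, and verify lower semicontinuity via Lemma~\ref{lem:easy}, after which the application of the definition of $\ll$ and the extraction of the local $c_t$ are routine.
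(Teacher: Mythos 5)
Your second paragraph (the direction ``local condition $\Rightarrow g\ll f$'') is correct and is essentially the paper's argument: Remark~\ref{rem:tontada} plus lower semicontinuity of $h_{n_t}$ gives $c_t\ll h_{n_t}$ on a neighbourhood, and compactness plus monotonicity of $(h_n)$ finishes it. (Minor point: you need $c_t\ll h_{n_t}(t)$, not merely $c_t\le h_{n_t}(t)$, to invoke Lemma~\ref{lem:easy}; Remark~\ref{rem:tontada} does in fact deliver the $\ll$, but your parenthetical fallback via an auxiliary $c_t'\ll c_t$ would leave you with $g(s)\le c_t$ and only $c_t'\le h_{n_t}(s)$, which does not chain.)

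The forward direction as you sketch it has a genuine gap, and you have correctly located it yourself but not closed it. First, the proposition carries no countably-based hypothesis on $M$, so a proof should not lean on a countable base $(b_m)$ of $M$. Second, and more seriously, your plan requires forming ``the relevant joins'' of the functions $b_m\cdot\chi_W$ to get an increasing sequence $(f_n)$ with supremum $f$, and at the end taking ``the relevant finite sum (or maximum)'' of basic elements to produce $c_t$. Semigroups in Cu are not lattices: finite suprema of elements of $M$ need not exist, and addition is not a substitute, since $a\ll f(t)$ and $b\ll f(t)$ do not give $a+b\le f(t)$. So both the construction of the approximating sequence and the final extraction of $c_t$ break down. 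The paper's proof sidesteps all of this by working one point at a time: fix $t$, write $f(t)=\sup_m a_m$ with $(a_m)$ rapidly increasing in $M$, use lower semicontinuity and metrizability of $X$ to choose shrinking neighbourhoods $V_m$ of $t$ with $a_m\ll f(s)$ for $s\in\overline{V_m}$ and $\bigcap_m V_m=\{t\}$, and define $f_m$ to equal $a_m$ on $\overline{V_m}$ and $f$ elsewhere (lower semicontinuous by Lemma~\ref{lem:easy2}). These form an increasing sequence with supremum $f$, so $g\ll f$ gives $g\le f_{m_0}$ for some $m_0$, and then $U_t=V_{m_0}$, $c_t=a_{m_0}$ satisfy $g(s)\le c_t\ll f(s)$ on $U_t$. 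No base of $M$, no joins, and no global bookkeeping over $X$ are needed; a single sequence is built for each $t$ separately.
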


\begin{proof}
Suppose $g\ll f$. 
Given $t\in X$ let us write $f(t)=\sup_m a_m$  where $(a_m)$ is a rapidly increasing sequence. 
Since $f\in \Lsc(X,M)$ and $X$ is metrizable there exists, for all $m$, an open neighbourhood $V_m$ of $t$ such that 
$a_m\ll f(s)$ for all $s\in \overline V_m$. Moreover, using that $X$ is metrizable we may choose these neighbourhoods to be such that $\{t\}=\bigcap_m V_m$.

Consider the following functions in $\Lsc(X,M)$,
\[
f_m(s):=\begin{cases} a_m  & \text{ if } s\in \overline{V_m} \\ f(s) & \text{ otherwise}\end{cases}
\] 
Using Lemma \ref{lem:easy2} % with $g\in \Lsc(\overline{V_m},M)$ a constant function
we see that $f_m\in \Lsc(X,M)$, and it is clear 
that $f=\sup_m f_m$. Hence, there exists $m_0$ such that $g\leq f_{m_0}$, and this inequality proves the result  by taking $U_t=U_{m_0}$ and $c_t=a_{m_0}$.

Now suppose the condition holds, and consider an increasing sequence $h_n\in \Lsc(X,M)$ such that $f\leq \sup_n h_n$.
For each $t\in X$ there are a neighbourhood $V_t$ of $t$ and $c_t\in M$  such that $g(s)\leq c_t\ll f(s)$,  for all $s\in V_t$. 
Thus, $g(t)\leq c_t\ll f(t)\leq \sup_n h_n(t)$. Hence, there exists $n_t\in \N$  such that $c_t\ll h_{n_{t}}(t)$. 
Lower semicontinuity of $h_{n_t}$ now provides a neighbourhood $U_t$ such that $c_t\ll h_{n_t}(s)$ for all $s\in U_t$. 
Put $W_t=U_t\cap V_t$. Then,
\[
g(s)\leq c_t\ll h_{n_t}(s)\,,
\] 
whenever $s\in W_t$.

By compactness of $X$, there is a finite cover $W_{t_1}, \dots,W_{t_k}$ such that $g(s)\ll h_{n_{t_i}}(s)$ for every $s\in W_{t_i}$.
Since the sequence $(h_n)$ is increasing, there exists $N$ such that $h_{n_{t_i}}\leq h_N$ for all $i$ and thus $g(s)\leq h_N(s)$ for all $s\in X$.
%Since $g$ vanishes outside $K$, 
We thus have $g\leq h_N$. This implies that $g\ll f$, as desired.
\end{proof}

With this characterization at hand, we can now prove that addition in $\Lsc(X,M)$ is compatible with compact containment.

\begin{corollary}
\label{cor:additionandwb} Let $M$ be a semigroup in Cu. Let $f_1,f_2,g_1,g_2\in \Lsc(X,M)$ such that $f_1\ll g_1$ and $f_2\ll g_2$. Then, $f_1+f_2\ll g_1+g_2$.  
\end{corollary}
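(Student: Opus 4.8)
The plan is to apply the local characterization of compact containment given in Proposition \ref{prop:charact} to each of the hypotheses separately, and then combine the local data additively, using that $\ll$ is compatible with addition in $M$ (which is axiom (iii) of Theorem \ref{th:CEI}).

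More precisely, I would first invoke the forward direction of Proposition \ref{prop:charact} for the containment $f_1\ll g_1$: this yields, for each $t\in X$, an open neighbourhood $U_t^1$ of $t$ and an element $c_t^1\in M$ such that $f_1(s)\leq c_t^1\ll g_1(s)$ for all $s\in U_t^1$. Applying the same proposition to $f_2\ll g_2$ produces an open neighbourhood $U_t^2$ and an element $c_t^2\in M$ with $f_2(s)\leq c_t^2\ll g_2(s)$ for all $s\in U_t^2$.

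Next I would set $U_t:=U_t^1\cap U_t^2$, which is again an open neighbourhood of $t$, and $c_t:=c_t^1+c_t^2\in M$. For every $s\in U_t$ the pointwise order and addition in $\Lsc(X,M)$ give
\[
(f_1+f_2)(s)=f_1(s)+f_2(s)\leq c_t^1+c_t^2=c_t,
\]
while the compatibility of $\ll$ with addition in $M$, together with $c_t^1\ll g_1(s)$ and $c_t^2\ll g_2(s)$, yields
\[
c_t=c_t^1+c_t^2\ll g_1(s)+g_2(s)=(g_1+g_2)(s).
\]
Thus $(f_1+f_2)(s)\leq c_t\ll (g_1+g_2)(s)$ for all $s\in U_t$, and the reverse direction of Proposition \ref{prop:charact} delivers $f_1+f_2\ll g_1+g_2$, as required.

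I do not anticipate a genuine obstacle here: the entire argument is an immediate consequence of Proposition \ref{prop:charact} once one observes that the local witnesses $c_t^1,c_t^2$ can simply be added. The only point requiring a little care is to ensure that $c_t^1+c_t^2\ll g_1(s)+g_2(s)$ holds \emph{pointwise} for each fixed $s$, which is exactly the statement that $\ll$ respects addition in the ambient semigroup $M$ from Theorem \ref{th:CEI}(iii), rather than any property of $\Lsc(X,M)$ itself; intersecting the two neighbourhoods is what guarantees both inequalities are available simultaneously on a common open set.
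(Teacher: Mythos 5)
Your proof is correct and is essentially identical to the paper's own argument: both apply Proposition \ref{prop:charact} to each containment, intersect the resulting neighbourhoods, add the local witnesses using the compatibility of $\ll$ with addition in $M$, and conclude with the converse direction of Proposition \ref{prop:charact}.
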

\begin{proof}
Let $t\in X$. As $f_1\ll g_1$ and $f_2\ll g_2$, we obtain by Proposition \ref{prop:charact} neighbourhoods $U_t$ and $V_t$ of $t$, and elements $c_t,d_t\in M$ such that $f_1(s)\leq c_t\ll g_1(s)$ for all $s\in U_t$
and $f_2(s)\leq d_t\ll g_2(s)$ for all $s\in V_t$. Hence, $W_t=U_t\cap V_t$ is an open neighbourhood of $t$ such that for all $s\in W_t$, 
\[(f_1+f_2)(s)=f_1(s)+f_2(s)\leq c_t+d_t\ll g_1(s)+g_2(s) =(g_1+g_2)(s).\]
A second usage of Proposition \ref{prop:charact} yields $f_1+f_2\ll g_1+g_2$. 
\end{proof}

\begin{corollary}\label{cor:car} Let $M$ be a semigroup in Cu,  $f\in \Lsc(X,M)$ and $Y\subseteq X$ a closed set. Then, we have
\begin{enumerate}[{\rm (i)}]
 \item If $a\ll f(s)$ for all $s\in Y$, there exists an open neighbourhood $Y\subseteq U$ such that $a\ll f(s)$ for all $s\in U$. 
Furthermore $a\cdot \chi_{V}\ll f$ for all open sets $V\subseteq Y$.
\item If $g\in \Lsc(X,M)$ and $g\ll f$, then $g|_Y\ll f|_Y$. 
\end{enumerate}
\end{corollary}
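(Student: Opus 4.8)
The plan is to reduce both parts to the characterization of compact containment in Proposition \ref{prop:charact}, together with the local criterion for lower semicontinuity in Lemma \ref{lem:easy}. The only structural facts I will need beyond these are that a constant function lies in $\Lsc(X,M)$, that $a\cdot\chi_V\in\Lsc(X,M)$ (as noted in the discussion following the action of characteristic functions), and that $0\ll x$ for every $x\in M$, which is immediate since $0$ is the least element.

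For the first assertion of (i) I would argue directly from Lemma \ref{lem:easy}: for each $s\in Y$ the hypothesis $a\ll f(s)$ produces an open neighbourhood $U_s$ of $s$ with $a\ll f(s')$ for all $s'\in U_s$, and then $U:=\bigcup_{s\in Y}U_s$ is the desired open set containing $Y$. For the second assertion I first record that, since $V$ is open with $V\subseteq Y$ and $Y$ is closed, one has $\overline V\subseteq Y\subseteq U$. I would then verify the criterion of Proposition \ref{prop:charact} for $a\cdot\chi_V\ll f$ by a two-case split according to a point $t\in X$. If $t\in U$, take $c_t=a$ and the neighbourhood $U$ itself, using that $(a\cdot\chi_V)(s)\le a\ll f(s)$ for every $s\in U$. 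If $t\notin U$, then $t\in X\setminus\overline V$, so take $c_t=0$ and the neighbourhood $X\setminus\overline V$, on which $a\cdot\chi_V$ vanishes and $0\ll f(s)$; these two families of neighbourhoods cover $X$, and Proposition \ref{prop:charact} then gives $a\cdot\chi_V\ll f$.

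For (ii), the restrictions $g|_Y$ and $f|_Y$ belong to $\Lsc(Y,M)$ by Lemma \ref{lem:easy2}(i), and since $Y$ is a closed subset of the second countable compact Hausdorff space $X$ it is itself second countable, compact, and Hausdorff, so Proposition \ref{prop:charact} applies verbatim with $Y$ in place of $X$. Applying the proposition to $g\ll f$ on $X$, for each $t\in Y$ I obtain an open neighbourhood $U_t$ of $t$ in $X$ and an element $c_t\in M$ with $g(s)\le c_t\ll f(s)$ for all $s\in U_t$; intersecting with $Y$ yields a relatively open neighbourhood $U_t\cap Y$ of $t$ in $Y$ on which $g|_Y(s)\le c_t\ll f|_Y(s)$. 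A second application of Proposition \ref{prop:charact}, now on the space $Y$, then produces $g|_Y\ll f|_Y$.

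The step I expect to be the main obstacle is the boundary behaviour in the second assertion of (i): one must guarantee that the two neighbourhood choices genuinely exhaust $X$ and feed correctly into Proposition \ref{prop:charact}. This is exactly what the inclusion $\overline V\subseteq Y\subseteq U$ secures, since it forces every point outside $U$ to lie outside $\overline V$, so that the value $0$ of $a\cdot\chi_V$ there is dominated by $0\ll f$. Working only with $V$ rather than $\overline V$ would leave open the possibility of a point on the topological boundary of $V$ lying outside $U$, where neither $c_t=a$ nor $c_t=0$ would serve.
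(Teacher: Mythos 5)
Your proof is correct and follows essentially the same route as the paper, which only sketches this argument: Lemma \ref{lem:easy} for the first assertion of (i), and Proposition \ref{prop:charact} (applied on $X$ and then on the compact metrizable subspace $Y$) for the rest. Your explicit handling of the boundary via $\overline{V}\subseteq Y\subseteq U$ and the observation that $0\ll f(s)$ off $\overline{V}$ simply fill in the details the paper leaves implicit.
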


\begin{proof}
The first assertion in (i) is a straightforward application of Lemma \ref{lem:easy}. 
Then, since $Y$ is compact, this can be used to find the open sets required by Proposition \ref{prop:charact}. 
Finally, (ii) is a consequence of Proposition \ref{prop:charact}. 
\end{proof}

Recall that in a topological space $X$, the \emph{covering dimension}
is defined as the least $n$ such that any open cover has an open refinement of multiplicity $\leq n+1$, or infinity in case this $n$ does not
exist. Here, a cover $\mathcal U=\{U_\lambda\}_{\lambda\in \Lambda}$ has multiplicity $k$ if every 
$x\in X$ belongs to at most $k$ subsets in $\mathcal U$.

We proceed to show that, in case that $X$ is finite dimensional, every function $f\in \Lsc(X,M)$ can be written as the  supremum of
a directed set of functions. In relevant situations such set can be taken to be a sequence and that will show that $\Lsc(X, M)$
is an object in Cu. 

We first generalize the step functions defined in the case of $X=[0,1]$ to an arbitrary space $X$ (cf. Definition \ref{def:pw01}).

\begin{notation}
Given a family of open sets ${\mathcal U}=\{U_i\}_{i\in \Lambda}$, we write $F_{\mathcal U,t}$, $F'_{\mathcal U,t}$, and $\mathcal{A}_{\mathcal U}$ for the sets:
\begin{align*}
 F_{\mathcal U,t}:=\{ i\in \Lambda\mid t\in U_i\},\quad  F'_{\mathcal U,t}:=\{ i\in \Lambda\mid t\in \overline{U_i}\},\quad \mathcal{A}_{\mathcal U}:=\{ F_{\mathcal U,t},\ F'_{\mathcal U,t}\mid t\in X\}.
\end{align*}
When clear we will omit $\mathcal U$ in the notation. Observe that $\mathcal A_{\mathcal U}$ is a subset of the power set
$\mathcal P(\Lambda)$ hence we order it by inclusion.

\end{notation}

\begin{definition}\label{def:pw} Let $X$ be an $n$-dimensional topological space, $M$ a semigroup in Cu and $f\in \Lsc(X,M)$. A function $g\colon X\to M$ will be termed
a \emph{piecewise characteristic function for $f$} if there are:
\begin{enumerate}[{\rm (i)}]
\item A family of open sets ${\mathcal U}=\{U_i\}_{i=1}^m$ of $X$ such that $\{\overline{U}_i\}_{i=1}^{m}$ has multiplicity $\leq n+1$.
\item An ordered map $\varphi\colon {\mathcal A_{\mathcal U}}\to M$ with $\varphi(\varnothing)=0$
satisfying, for all $t\in X$:
\[g(t)=\varphi(F_{{\mathcal U},t})\leq \varphi(F'_{{\mathcal U},t})\ll f(t).\]
\end{enumerate}
We will use the notation $g:=\chi({\mathcal U},\varphi)$ to refer to such a function. The set of all piecewise characteristic functions for $f$
will be denoted by $\chi(f)$.
\end{definition}

%%%% In fact, the dimension of X is not needed in the definition, One could define the multiplicity of a piecewise characteristic function
%%%% as the multiplicity of the family of open sets, and then prove that each lower semicontinuous function over X is the supremum of a rapidly
%%%% increasing sequence of piecewise characteristic functions of multiplicity \leq dim(X)+1.

\begin{lemma}\label{lem:proppw} If $f\in \Lsc(X,M)$ and $g$ is a piecewise characteristic function for $f$, then $g\in \Lsc(X,M)$ and $g\ll f$.\end{lemma}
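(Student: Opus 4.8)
The plan is to prove the two assertions in turn, relying on the characterizations of lower semicontinuity (Lemma \ref{lem:easy}) and of compact containment in $\Lsc(X,M)$ (Proposition \ref{prop:charact}). Write $\mathcal{U}=\{U_i\}_{i=1}^m$ and $\varphi$ for the data defining $g=\chi(\mathcal{U},\varphi)$, so that $g(t)=\varphi(F_{\mathcal{U},t})$, the map $\varphi$ is order preserving, and $g(t)\leq\varphi(F'_{\mathcal{U},t})\ll f(t)$ for all $t$. To see that $g\in\Lsc(X,M)$, I fix $t\in X$ and $a\ll g(t)$ and seek a neighbourhood on which $a\ll g$. Since $\Lambda=\{1,\dots,m\}$ is finite, $V:=\bigcap_{i\in F_{\mathcal{U},t}}U_i$ is an open neighbourhood of $t$, and every $s\in V$ satisfies $F_{\mathcal{U},t}\subseteq F_{\mathcal{U},s}$; monotonicity of $\varphi$ then gives $g(t)=\varphi(F_{\mathcal{U},t})\leq\varphi(F_{\mathcal{U},s})=g(s)$. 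Thus $a\ll g(t)\leq g(s)$, so $a\ll g(s)$ for all $s\in V$, and Lemma \ref{lem:easy} yields $g\in\Lsc(X,M)$.

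For the compact containment $g\ll f$, I use Proposition \ref{prop:charact}: it suffices to produce, for each $t\in X$, an open neighbourhood $W$ and an element $c_t\in M$ with $g(s)\leq c_t\ll f(s)$ for all $s\in W$. The natural choice is $c_t:=\varphi(F'_{\mathcal{U},t})$, which satisfies $c_t\ll f(t)$ by the defining inequality; since $f\in\Lsc(X,M)$, Lemma \ref{lem:easy} provides an open neighbourhood $U_1$ of $t$ with $c_t\ll f(s)$ for $s\in U_1$. To bound $g$ from above near $t$, I set $U_2:=\bigcap_{i\notin F'_{\mathcal{U},t}}(X\setminus\overline{U_i})$, a finite intersection of open sets containing $t$. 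The point is that for $s\in U_2$ and $i\in F_{\mathcal{U},s}$ one has $s\in U_i\subseteq\overline{U_i}$, so $s\notin X\setminus\overline{U_i}$, which forces $i\in F'_{\mathcal{U},t}$; hence $F_{\mathcal{U},s}\subseteq F'_{\mathcal{U},t}$ and $g(s)=\varphi(F_{\mathcal{U},s})\leq\varphi(F'_{\mathcal{U},t})=c_t$. Taking $W=U_1\cap U_2$ then gives $g(s)\leq c_t\ll f(s)$ throughout $W$, and Proposition \ref{prop:charact} concludes $g\ll f$.

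The whole argument is essentially bookkeeping with the two index sets $F_{\mathcal{U},t}$ and $F'_{\mathcal{U},t}$, so I do not expect a genuine obstacle. The one step needing care is the inclusion $F_{\mathcal{U},s}\subseteq F'_{\mathcal{U},t}$ for $s$ near $t$: this is exactly where the distinction between $U_i$ and its closure matters, and it is handled by intersecting the complements $X\setminus\overline{U_i}$ over the indices $i$ with $t\notin\overline{U_i}$. I note that the multiplicity hypothesis on $\{\overline{U_i}\}$ is not used in this lemma and will only become relevant in the subsequent constructions.
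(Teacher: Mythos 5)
Your proof is correct and follows essentially the same route as the paper's: lower semicontinuity via $V=\bigcap_{i\in F_{\mathcal U,t}}U_i$ and monotonicity of $\varphi$, and compact containment via Proposition \ref{prop:charact} with $c_t=\varphi(F'_{\mathcal U,t})$. The only difference is that you make explicit, as $\bigcap_{i\notin F'_{\mathcal U,t}}(X\setminus\overline{U_i})$, the neighbourhood of $t$ meeting only those $U_i$ with $i\in F'_{\mathcal U,t}$, which the paper asserts without construction; your remark that the multiplicity hypothesis is not needed here is also accurate.
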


\begin{proof}
Consider $g=\chi(\{U_i\}_{i=1}^m,\varphi)$ as in Definition \ref{def:pw} with $g(t)=\varphi(F_t)\leq \varphi(F'_t)\ll f(t)$. 

Given $a\ll g(t)$, observe that $t\in V_t:=\bigcap_{i\in F_{t}}U_i$ which is an open neighbourhood of $t$.
 Then, for all $s\in V_t$ we have $F_s\supseteq F_t$ and therefore $a\ll g(t)=\varphi(F_t)\leq \varphi(F_s)=g(s)$ for all $s\in V_t$.   
By Proposition \ref{lem:easy}, this proves that $g\in \Lsc(X,M)$. 

Now observe that given $t\in X$, there is a neighbourhood $V_t$ of $t$ that only intersects the sets $U_i$ with $i\in F'_t$. 
Hence $F_s\subseteq F'_t$ for all $s\in V_t$. 
Therefore, $g(s)=\varphi(F_s)\leq \varphi(F'_t)\ll f(t)$ for all $s\in V_t$. By lower semicontinuity 
of $f$ we can choose this neighbourhood in such a way that $g(s)\leq \varphi(F'_t) \ll f(s)$ for all $s\in V_t$. 
This implies by Proposition \ref{prop:charact} that $g\ll f$.
\end{proof}

\begin{lemma}
\label{lem:sup}
Let $M$ be a semigroup in Cu and $f\in \Lsc(X,M)$. Then 
\[
f=\sup\{ g\mid g\in \chi(f)\}\,.
\] 
\end{lemma}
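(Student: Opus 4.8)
The plan is to show that $f$ is the least upper bound of $\chi(f)$ in the pointwise order on $\Lsc(X,M)$. That $f$ is an upper bound is immediate from Lemma \ref{lem:proppw}: every $g\in\chi(f)$ satisfies $g\ll f$, hence $g\le f$ pointwise. The work lies in showing $f$ is the \emph{least} upper bound, and for this I would reduce to a local statement: it suffices to prove that for each $t\in X$ the set of values $\{g(t)\mid g\in\chi(f)\}$ has supremum $f(t)$. Indeed, granting this, any upper bound $h$ of $\chi(f)$ satisfies $h(t)\ge\sup\{g(t)\mid g\in\chi(f)\}=f(t)$ for all $t$, so $f\le h$.

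The heart of the argument is a single-set construction. Fixing $t\in X$ and $a\ll f(t)$, I would first use Lemma \ref{lem:easy} to get an open neighbourhood $W$ of $t$ with $a\ll f(s)$ for all $s\in W$, and then, since $X$ is metrizable and hence regular, choose an open set $V$ with $t\in V\subseteq\overline V\subseteq W$. The claim to verify is that $g:=a\cdot\chi_{V}$ is a piecewise characteristic function for $f$. Taking $\mathcal U=\{V\}$ (so $n=\dim X$ and the family $\{\overline V\}$ has multiplicity $1\le n+1$), one has $\mathcal A_{\mathcal U}\subseteq\{\varnothing,\{1\}\}$, and defining $\varphi(\varnothing)=0$, $\varphi(\{1\})=a$ gives an ordered map with $\varphi(\varnothing)=0$ and $g=\chi(\mathcal U,\varphi)$, since $g(s)=\varphi(F_{\mathcal U,s})$ equals $a$ on $V$ and $0$ off $V$.

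Checking condition (ii) of Definition \ref{def:pw} splits into three cases. For $s\in V$ we have $F_s=F_s'=\{1\}$, so $g(s)=a\le a\ll f(s)$ as $s\in W$. For $s\in\overline V\setminus V$ we have $F_s=\varnothing$ and $F_s'=\{1\}$, so $g(s)=0\le a\ll f(s)$, again because $s\in\overline V\subseteq W$. For $s\notin\overline V$ both $F_s$ and $F_s'$ are empty, so $g(s)=0\le 0\ll f(s)$, using that $0$ is the least element and hence $0\ll c$ for every $c\in M$. Thus $g\in\chi(f)$, and since $t\in V$ we get $g(t)=a$.

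To conclude, I would invoke Theorem \ref{th:CEI}(ii) to write $f(t)$ as the supremum of a rapidly increasing sequence, so that $f(t)=\sup\{a\mid a\ll f(t)\}$; the construction then shows each such $a$ equals $g(t)$ for some $g\in\chi(f)$, whence $\sup\{g(t)\mid g\in\chi(f)\}\ge f(t)$, and the reverse inequality holds since $g\le f$. This gives the local statement and therefore $f=\sup\chi(f)$. I expect the only delicate point to be the verification against Definition \ref{def:pw}, specifically the boundary case $s\in\overline V\setminus V$ where $F_s$ and $F_s'$ differ, together with the observation that $0\ll f(s)$ off $\overline V$; the remaining steps are a routine assembly of lower semicontinuity and the regularity of $X$.
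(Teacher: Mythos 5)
Your proof is correct and follows essentially the same route as the paper: both arguments fix $t\in X$ and an element compactly below $f(t)$, shrink a neighbourhood on which that element is compactly below $f$, and exhibit the resulting single-set characteristic function $a\cdot\chi_V$ as an element of $\chi(f)$ taking the value $a$ at $t$, then conclude pointwise. Your verification of Definition \ref{def:pw} (including the boundary case and the observation that $0\ll c$ for all $c$) is slightly more explicit than the paper's, but the substance is identical.
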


\begin{proof}
Given $t\in X$, let us write $f(t)=\sup_na_n$ where $a_n\ll a_{n+1}$.  Given $n$, lower semicontinuity of $f$ provides us with a neighbourhood $U'_n$
such that $a_n \ll f(s)$ for all $s\in U'_n$. Since $X$ is normal, we can find an open neighbourhood $U_n$ such that $\overline U_n\subseteq U'_n$.
Hence, 
\[
g_n=a_n\cdot \chi_{U_n}=\chi(\mathcal U_n,\varphi_n),
\]
with $\mathcal U_n=\{U_n\}$, $\varphi(\varnothing)=0$ and $\varphi(\{n\})=a_n$, 
is a piecewise characteristic function for $f$ with $g_n(t)=a_n$.
If now $h\in \Lsc(X,M)$ is such that $h\geq g$ for all $g\in\chi(f)$, then in particular $h(t)\geq g_n(t)$.
Thus $h(t)\geq a_n$ for each $n$, that is, $h(t)\geq f(t)$. Since the supremum in $\Lsc(X,M)$ is the pointwise supremum, we obtain that 
$f$ is the supremum of its piecewise characteristic functions. 
\end{proof}

The previous lemma describes each element $f$ in $\Lsc(X,M)$ as the supremum of piecewise characteristic functions that are compactly contained in $f$. But to prove that $\Lsc(X,M)$ is an object in Cu, we need to write $f$ as the supremum of a \emph{sequence}. In order to prove this, we will first show that the set of piecewise characteristic
functions form a directed set, and that it can be chosen to be countable if furthermore $M$ is countably based.

To this end, we shall use induction on the dimension of the space. The key of the inductive step is encoded in the following lemma.

\begin{lemma}\label{lem:pw}
Let $Y\subset X$ be a closed set, let $f\in \Lsc(X,M)$, and let $g\in \Lsc(Y,M)$ be a piecewise characteristic function for $f|_Y$. 
Then, there exists a piecewise characteristic function $h$ for $f$ such that $g\leq h|_{Y}$. 

Moreover, if $g=\chi(\mathcal W,\phi)$, then for every $\delta>0$ $h$ can be constructed as 
$h=\chi(\mathcal W^\epsilon,\phi^{\epsilon})$, where $0<\epsilon< \delta$ and $\mathcal W^\epsilon$ 
consists of the $\epsilon$-neighbourhoods of the elements of $\mathcal W$, and $\phi^{\epsilon}$ is a restriction of $\phi$.

\end{lemma}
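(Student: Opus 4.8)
The plan is to realise $h$ by thickening the open sets defining $g$ from $Y$ into $X$ and transporting the values of $\phi$ essentially unchanged, the whole difficulty being to choose the thickening parameter $\epsilon$ small enough that several requirements hold at once. Write $g=\chi(\mathcal W,\phi)$ with $\mathcal W=\{W_i\}_{i=1}^m$ open in $Y$, so that $\phi\colon\mathcal A_{\mathcal W}\to M$ is order preserving, $\phi(\varnothing)=0$, and $g(t)=\phi(F_{\mathcal W,t})\le\phi(F'_{\mathcal W,t})\ll f(t)$ for every $t\in Y$ (recall $f|_Y(t)=f(t)$ as $Y\subseteq X$). Fix a compatible metric $d$ on $X$ (possible since $X$ is compact metrizable) and, for $0<\epsilon<\delta$, let $W_i^\epsilon=\{x\in X: d(x,W_i)<\epsilon\}$ be the open $\epsilon$-neighbourhood, so that $\mathcal W^\epsilon=\{W_i^\epsilon\}_{i=1}^m$ is a family of open sets of $X$ with $W_i\subseteq W_i^\epsilon$ and $\overline{W_i^\epsilon}\subseteq\{x:d(x,\overline{W_i})\le\epsilon\}$.

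First I would pin down the combinatorics of the thickened family. For each $J\subseteq\{1,\dots,m\}$ put $K_J=\bigcap_{i\in J}\overline{W_i}$, a compact subset of $Y$. The closed sets $\{x:d(x,\overline{W_i})\le\epsilon,\ i\in J\}$ decrease, as $\epsilon\downarrow 0$, to $K_J$; hence by compactness of $X$ (a nested family of compacta with empty intersection is eventually empty, and more generally is eventually contained in any open neighbourhood of its intersection) one may choose $\epsilon$ so small that: (a) if $K_J=\varnothing$ then $\bigcap_{i\in J}\overline{W_i^\epsilon}=\varnothing$; and (b) whenever $t\in\bigcap_{i\in J}\overline{W_i^\epsilon}$, the point $t$ lies within a prescribed distance of $K_J$. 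Property (a), applied to the finitely many $J$ with $|J|>\dim X+1$ (which have $K_J=\varnothing$ because $\{\overline{W_i}\}$ has multiplicity $\le\dim X+1$ in $Y$), guarantees that $\{\overline{W_i^\epsilon}\}$ still has multiplicity $\le\dim X+1$, which is requirement (i) of Definition \ref{def:pw}. Property (b) is what lets me transport values: if $J=F'_{\mathcal W^\epsilon,t}$ then $K_J\ne\varnothing$ and $t$ is close to some $s\in K_J\subseteq Y$, for which $J\subseteq F'_{\mathcal W,s}$.

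Next I would define $\phi^\epsilon$ on $\mathcal A_{\mathcal W^\epsilon}$ as the restriction of $\phi$, i.e.\ assign to each index pattern occurring for $\mathcal W^\epsilon$ its $\phi$-value through the nearby pattern on $Y$ produced in (b); monotonicity of $\phi$ makes this assignment order preserving and keeps $\phi^\epsilon(\varnothing)=0$. I then set $h=\chi(\mathcal W^\epsilon,\phi^\epsilon)$ and check it is a piecewise characteristic function for $f$. The upper estimate $\phi^\epsilon(F'_{\mathcal W^\epsilon,t})\ll f(t)$ for all $t\in X$ is the crux: choosing $s\in Y$ near $t$ as in (b), we have $\phi^\epsilon(F'_{\mathcal W^\epsilon,t})\le\phi(F'_{\mathcal W,s})\ll f(s)$, and since $f$ is lower semicontinuous, $\phi(F'_{\mathcal W,s})\ll f(t)$ provided $t$ is close enough to $s$; the needed closeness is uniform by compactness of $Y$ together with Lemma \ref{lem:easy}, and it is this uniform threshold that dictates the final (small) choice of $\epsilon$. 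The remaining inequality $\phi^\epsilon(F_{\mathcal W^\epsilon,t})\le\phi^\epsilon(F'_{\mathcal W^\epsilon,t})$ is immediate from $F_{\mathcal W^\epsilon,t}\subseteq F'_{\mathcal W^\epsilon,t}$ and monotonicity, and then $h\in\Lsc(X,M)$ with $h\ll f$ follows from Lemma \ref{lem:proppw}.

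Finally, for $g\le h|_Y$ I would note that for $t\in Y$ one has $t\in W_i\Rightarrow t\in W_i^\epsilon$, so $F_{\mathcal W,t}\subseteq F_{\mathcal W^\epsilon,t}$ and hence $g(t)=\phi(F_{\mathcal W,t})\le\phi^\epsilon(F_{\mathcal W^\epsilon,t})=h(t)$ by monotonicity. Since $\epsilon<\delta$ and $\mathcal W^\epsilon,\phi^\epsilon$ are of the advertised form, the moreover clause is covered as well. I expect the genuine obstacle to be exactly the simultaneous, uniform choice of $\epsilon$: the index pattern $F'_{\mathcal W^\epsilon,t}$ of a thickened point need not occur on $Y$ at $t$ itself (tangential contact of the $\overline{W_i}$ can create patterns that only appear at nearby points of $Y$), so one must lean on compactness to realise every occurring pattern by a point of $Y$ close to $t$, and on lower semicontinuity of $f$ to carry the bound $\phi(F')\ll f$ from that point back to $t$, all with a single $\epsilon$ that at the same time preserves the multiplicity bound.
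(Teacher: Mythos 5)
Your overall strategy --- thicken the $W_i$ into $X$, control the combinatorics of the thickened family by compactness, and transport the values of $\phi$ --- is the same as the paper's, and two of your three ingredients are sound: the preservation of the multiplicity bound via ``empty intersections of closures stay empty for small $\epsilon$'', and the estimate $\phi(\cdot)\ll f(t)$ on $\bigcap_{i\in J}\overline{W_i^\epsilon}$ obtained by shrinking that set into a neighbourhood of $K_J$ on which $\phi(\cdot)\ll f$ (the paper does exactly this, using Corollary \ref{cor:car}). The genuine gap is in the definition of $\phi^\epsilon$. Your property (b) only yields that each pattern $J=F'_{\mathcal W^\epsilon,t}$ is \emph{contained in} some pattern $F'_{\mathcal W,s}$ of the original family at a nearby point $s\in Y$; it does not show $J\in\mathcal A_{\mathcal W}$. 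Consequently ``the restriction of $\phi$'' is simply not defined on $\mathcal A_{\mathcal W^\epsilon}$, and the fallback you describe --- assigning to $J$ the value of $\phi$ at ``the nearby pattern'' $F'_{\mathcal W,s}$ --- is neither well defined (it depends on the choice of $s$, and different $t$ realizing the same pattern $J$ produce different $s$) nor provably order preserving: for $J_1\subseteq J_2$ in $\mathcal A_{\mathcal W^\epsilon}$ the associated patterns $F'_{\mathcal W,s_1}\supseteq J_1$ and $F'_{\mathcal W,s_2}\supseteq J_2$ need not be comparable (e.g.\ $J_1=\{1\}\subseteq J_2=\{1,2\}$ with $s_1\in\overline{W_1}\cap\overline{W_3}$ and $s_2\in\overline{W_1}\cap\overline{W_2}$ gives $\{1,3\}$ versus $\{1,2\}$), so $\phi^\epsilon(J_1)\leq\phi^\epsilon(J_2)$ can fail. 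Monotonicity of $\phi^\epsilon$ is not cosmetic: it is exactly what Lemma \ref{lem:proppw} uses to prove that $h$ is lower semicontinuous, and the ``moreover'' clause of the statement explicitly requires $\phi^\epsilon$ to be a restriction of $\phi$.

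The paper closes this gap with an additional pointwise-plus-compactness argument that your proposal is missing: for each $t\in X$ one chooses $\epsilon_t$ so small that $F_{\mathcal W^{\epsilon_t},s}=F'_{\mathcal W^{\epsilon_t},s}=F'_{\mathcal W,t}$ for all $s$ in a neighbourhood $V_t$ of $t$ (indices $j$ with $t\in\overline{W_j}$ put a whole neighbourhood of $t$ inside the open set $W_j^{\epsilon_t}$, while indices with $t\notin\overline{W_j}$ keep a whole neighbourhood of $t$ outside the closed set $\overline{W_j^{\epsilon_t}}$); covering $X$ by finitely many $V_{t_i}$ and taking a uniform $\epsilon$ then gives $\mathcal A_{\mathcal W^\epsilon}\subseteq\mathcal A_{\mathcal W}$, so that $\phi^\epsilon=\phi|_{\mathcal A_{\mathcal W^\epsilon}}$ is literally a restriction and inherits order preservation for free. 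You need this step (or some equivalent device identifying each thickened pattern with an actual pattern of $\mathcal W$, not merely with a subset of one) before the remainder of your argument --- which is otherwise in order --- goes through.
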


\begin{proof}
Suppose $g=\chi({\mathcal W},\phi)$ where ${\mathcal W}=\{W_i\}_{i=1}^{k}$. For every $\epsilon>0$ and $i=1,\dots,k$, we denote by $W_i^\epsilon$
the $\epsilon$-neighbourhood of $W_i$. That is, $W_i^\epsilon=\{x\in X\mid d(x,\overline{W_i})<\epsilon\}$. 
We claim that $\epsilon$ can be chosen in such a way that 
${\mathcal A_{\mathcal W}}\supseteq {\mathcal A}_{\mathcal W^\epsilon}$.

Observe that for all $\epsilon>0$ and $t\in X$, we have $F'_{\mathcal W^\epsilon,t}\supseteq F_{\mathcal W^\epsilon,t}\supseteq F'_{\mathcal W,t}$. For each $j$ such that $t\not\in \overline{W_j}$, there exists $\delta_j>0$, such that $t\not\in \overline{W_j^\delta}$ for all $\delta\leq\delta_j$.
Hence, for each $t$ there exists $\epsilon_t>0$ such that $F'_{\mathcal W^{\epsilon_t},t}=F_{\mathcal W^{\epsilon_t},t}=F'_{\mathcal W,t}$ and hence there is 
a neighbourhood $V_t$ of $t$ such that $F'_{\mathcal W^{\epsilon_t},s}=F_{\mathcal W^{\epsilon_t},s}=F'_{\mathcal W,t}$ for all $s\in V_t$. Since $X$
is compact, we can find a finite number of such neighbourhoods $V_{t_i}$ covering $X$ and hence there is $0<\epsilon<\epsilon_{t_i}$ for which we will have 
 ${\mathcal A_{\mathcal W}}\supseteq {\mathcal A}_{\mathcal W^\epsilon}$. In this situation, $\{ \overline{W^\epsilon_i}\}$ has the same multiplicity as $\{\overline{W_i}\}$.

Now, for each $F\in {\mathcal A}_{\mathcal W}$ and $t\in \bigcap_{i\in F}\overline{W_i}$ we have $F\subseteq F'_t$, and therefore $\phi(F)\leq \phi(F'_t)\ll f(t)$. 
Hence $\phi(F)\ll f(t)$ for all $t\in \bigcap_{i\in F}\overline{W_i}=W_F$. 
Since $W_F$ is closed in $X$, by Corollary \ref{cor:car} there is an open neighbourhood $U_F$ of $W_F$ (in $X$)
such that $\phi(F)\ll f(t)$. Thus we can make $\epsilon>0$ smaller if necessary so that $\bigcap_{i\in F} \overline{W^{\epsilon}_i}\subset U_F$, and in such a way that this is true for all $F\in {\mathcal A}_{\mathcal W}$. Then, 
\begin{equation}\label{pwh}
\phi(F)\ll f(t) \text{ for all }t\in\bigcap_{i\in F} \overline{W^{\epsilon}_i}.
\end{equation}

Now, since ${\mathcal A_{\mathcal W}}\supseteq {\mathcal A}_{\mathcal W^\epsilon}$, we consider $h=\chi({\mathcal W}^{\epsilon},\phi^\epsilon)$ where
$\phi^\epsilon=\phi|_{\mathcal A_{\mathcal W^{\epsilon}}}$. 

Given $t\in X$ we have $t\in\bigcap_{i \in F'_{{\mathcal W}^\epsilon,t}}\overline{W_i^{\epsilon}}$. Thus, by (\ref{pwh}) we have 
\[h(t)=\phi^{\epsilon}(F_{\mathcal W^\epsilon,t})\leq \phi^{\epsilon} (F'_{\mathcal W^\epsilon,t})\ll f(t),\]
proving that $h$ is a piecewise characteristic function for $f$.

Finally, it is clear that given $t\in Y$, $F_{\mathcal W,t}\subseteq F_{{\mathcal W^{\epsilon}},t}\in  {\mathcal A}_{\mathcal W^\epsilon}
\subseteq {\mathcal A_{\mathcal W}}$.  Hence, 
$g(t)=\phi(F_{\mathcal W,t})\leq \phi(F_{\mathcal W^{\epsilon},t})= \phi^{\epsilon}(F_{\mathcal W^{\epsilon},t})=h(t)$,
impliying $h|_Y\geq g$.

The last assertion is clear by the construction of $h$.
\end{proof}

\begin{proposition}
\label{prop:updirected}
Let $M$ be a semigroup in Cu. Let $f$, $g_1$ and $g_2 \in \Lsc(X,M)$ and suppose that  $g_1,g_2\ll f$.
Then, there exists $h\in\chi(f)$ such that $g_1,g_2\ll h$. In particular,
$\chi(f)$ is an upwards directed set. 
\end{proposition}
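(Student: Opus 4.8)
The plan is to argue by induction on the covering dimension $n$ of $X$, the inductive step transporting data from a lower-dimensional closed set to all of $X$ by means of Lemma \ref{lem:pw}. Granting the displayed statement, the final assertion is immediate: if $g_1,g_2\in\chi(f)$ then $g_1,g_2\ll f$ by Lemma \ref{lem:proppw}, so any $h\in\chi(f)$ with $g_1,g_2\ll h$ satisfies $g_1,g_2\le h$ and is an upper bound of both inside $\chi(f)$.

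First I would localize. Applying Proposition \ref{prop:charact} to $g_1\ll f$ and to $g_2\ll f$ gives, for each $t\in X$, neighbourhoods and elements $c_t^1,c_t^2\in M$ with $g_i(s)\le c_t^i\ll f(s)$ near $t$. Since $c_t^1,c_t^2\ll f(t)$, writing $f(t)$ as the supremum of a rapidly increasing sequence (Theorem \ref{th:CEI}(ii)) and absorbing both witnesses into one term as in Remark \ref{rem:tontada} produces a single $c_t$ with $c_t^1,c_t^2\le c_t\ll f(t)$; lower semicontinuity of $f$ then yields a neighbourhood $W_t$ on which $g_1(s),g_2(s)\le c_t\ll f(s)$. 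Compactness of $X$ extracts a finite subcover $\{W_1,\dots,W_N\}$ with constants $c_1,\dots,c_N$. Interpolating once more (again via the directedness of the way-below relation at a point together with lower semicontinuity of $f$) I can arrange a two-step estimate $g_i(s)\le c_j\ll d_j\ll f(s)$ on a possibly smaller set; this is what later upgrades $g_i\le h$ to $g_i\ll h$ through Proposition \ref{prop:charact}. In the base case $n=0$, $X$ has a basis of clopen sets, so the cover refines to a finite partition into disjoint clopen pieces $V_j\subseteq W_{l(j)}$; the piecewise characteristic function $h$ taking the value $d_{l(j)}$ on $V_j$ has singleton overlap sets $F_{\mathcal U,t}=F'_{\mathcal U,t}$, so its ordered map is automatically monotone, $h\in\chi(f)$, and $g_1,g_2\ll h$.

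For the inductive step I would use $\dim X\le n$ to refine to a cover $\{V_j\}$ with $\overline{V_j}\subseteq W_j$ whose closures have multiplicity $\le n+1$, chosen so that the overlap set $Y=\{t\in X\mid t\in \overline{V_i}\cap\overline{V_j}\text{ for some }i\neq j\}$ is closed and of dimension $\le n-1$. On $Y$ one has $g_1|_Y,g_2|_Y\ll f|_Y$ by Corollary \ref{cor:car}(ii), so the induction hypothesis furnishes a piecewise characteristic function for $f|_Y$ dominating both; Lemma \ref{lem:pw} extends it to a piecewise characteristic function for $f$ on $X$, the multiplicity being preserved by the $\epsilon$-neighbourhood construction in that lemma. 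Away from $Y$ the cover is disjoint and is treated as in the base case. It then remains to glue the two pieces into a single ordered map $\varphi$ on $\mathcal{A}_{\mathcal U}$, to check monotonicity and the squeeze $\varphi(F_{\mathcal U,t})\le\varphi(F'_{\mathcal U,t})\ll f(t)$ at every point, and to read off $g_1,g_2\ll h$ from the two-step estimate via Proposition \ref{prop:charact}.

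The main obstacle is precisely the construction of the \emph{order-preserving} map $\varphi$. Because finite suprema need not exist in $M$, one cannot simply assign to an overlap face the supremum of the adjacent constants $c_j$, and there is no a priori comparison among these constants. This is what forces the dimension induction: the overlaps are pushed onto the lower-dimensional closed set $Y$, where a monotone assignment is produced by the inductive hypothesis and transported by Lemma \ref{lem:pw}, while the directedness of the way-below set of each value $f(t)$ supplies, on each small face, the common upper bound needed to keep $\varphi$ monotone and compactly below $f$. Verifying that the glued $\varphi$ remains ordered across the interface between $Y$ and its complement, with the multiplicity bound intact, is the delicate point of the argument.
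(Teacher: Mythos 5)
Your overall strategy coincides with the paper's: localize $g_1,g_2\ll f$ via Proposition \ref{prop:charact}, use directedness of the set of elements way-below $f(t)$ to merge the two witnesses into a single $c_t$ (with a further interpolant to upgrade $\le$ to $\ll$ afterwards), pass to a finite subcover, and induct on $\dim X$, using Lemma \ref{lem:pw} to transport a solution on a lower-dimensional closed set back to $X$. The base case is fine, and you correctly identify the construction of the ordered map $\varphi$ as the delicate point.

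However, the inductive step contains a genuine error in the choice of the lower-dimensional set. You set $Y=\{t\in X\mid t\in\overline{V_i}\cap\overline{V_j}\ \text{for some}\ i\neq j\}$ and claim the cover can be refined so that $\dim Y\le n-1$. This is impossible in general: if $y\in Y$, pick $k$ with $y\in V_k$ and $i\neq k$ with $y\in\overline{V_i}$; then $V_k\cap V_i$ is a nonempty \emph{open} set contained in $\overline{V_k}\cap\overline{V_i}\subseteq Y$, so $Y$ has nonempty interior (and if $Y=\varnothing$ the cover is a partition into clopen sets, impossible for connected $X$ with more than one piece). Already for $X=[0,1]$ one cannot do better than a $1$-dimensional $Y$. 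The paper instead takes $Y=\bigcup_i\bd(V_{t_i})$, which \emph{can} be arranged to have dimension $\le n-1$ by \cite[4.2.2]{engelkin}. The price is that the $n$-dimensional interior overlaps of the $V_{t_i}$ do not go away: away from an $\epsilon$-neighbourhood of $Y$ the cover is not disjoint, but its \emph{cells} $\bigcap_{i\in F}V_{t_i}\setminus\bigl(Y^\epsilon\cup\bigcup_{k\notin F}V_{t_k}\bigr)$ are pairwise disjoint, and on a small neighbourhood of each such cell the paper assigns the single constant $c_{t_{i_1}}$ attached to the first index of $F$ (precisely because, as you note, no supremum of the adjacent constants is available). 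Monotonicity of the glued $\varphi$ across the interface with $Y^\epsilon$ is then not automatic; it is exactly where the inductively obtained strict domination $c_{t_i}\chi_{V^Y_{t_i}}\ll g_Y$ is used, and your sketch stops short of this verification. So the architecture is right, but the set $Y$ must be corrected and the interface comparison actually carried out.
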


\begin{proof}

Let $\eta>0$. We will prove, by induction on the dimension of $X$, that there exists an open cover $\mathcal U$ of $X$ and $h=\chi(\mathcal U,\varphi)$ such that 
$g_1,g_2\ll h\ll f$ and each open set $U_i\in\mathcal U$ is contained in an $\eta$-ball.

 By Proposition \ref{prop:charact}, for any $t\in X$ we can find an open neighbourhood $V'_t$ of $t$ and elements $a_t,b_t\in M$ such that $g_1(s)\leq a_t\ll f(t)$ and
$g_2(s)\leq b_t\ll f(t)$ for all $s\in V'_t$. We may further assume that each $V'_t$ is a $\delta_t$-ball with center in $t$ for some $0<\delta_t<\eta$.
As elements compactly contained in $f(t)$ form a directed set, there exists $c_t\in M$ such that 
$a_t,b_t\ll c_t\ll f(t)$. Now, since $f$ is also lower semicontinuous, we can choose the previous neighbourhoods in such a way that $c_t\ll f(s)$ for all $s\in V'_t$,
and therefore $g_1(s),g_2(s)\ll c_t \ll f(s)$ for all $s\in V'_t$. Now let $V_t$ be a $\delta_t/2$-ball with center $t$, so that
\[ g_1(s),g_2(s)\ll c_t \ll f(s) \text{ for all }s\in \overline{V_t}.\]

By compactness, there exists a finite cover for $X$ of the form ${\mathcal V}=\{V_{t_i}\}_{i=1}^{k}$. 

In case $X$ has dimension 0 this cover has a finite disjoint refinement, which means we can assume $\mathcal V$ is a finite cover of disjoint clopen sets. 
Hence, ${\mathcal A_{\mathcal V}}=\{\{1\},\dots,\{k\}\}$, and we can
consider the piecewise characteristic
function $h:=\chi({\mathcal V},\varphi)$, where %$\varphi(\varnothing)=0$ and 
$\varphi(\{i\})=c_{t_i}$. By construction each $V_{t_i}$ is contained in an $\eta$-ball, and
 it is not difficult to check that $g_1,g_2\ll h$.
 % with $h$ a characteristic function for $f$.  

Now suppose $\dim X=n\geq 1$ and that the result holds true for spaces of smaller dimension. Retain the construction of $V_{t_i}, V'_{t_i}, c_{t_i}$ as before. Using \cite[4.2.2]{engelkin}, we may assume without loss of generality that the boundary, $Y=\bigcup_{i=1}^k\bd({V_{t_i}})$ has dimension at most $n-1$.  Let $\delta=\min\{\delta_{t_i}/3\}$ and so 
we have $V_{t_i}^\delta\ll V'_{t_i}$ for all $i=1,\dots,k$, where $V_{t_i}^\delta$ is a $\delta$-neighbourhood of $V_{t_i}$.

Since $Y\subseteq X$ is a closed set, $f|_Y\in \Lsc(Y,M)$ (by Lemma \ref{lem:easy2}). 

For all $i=1,\dots,k$, put $V^Y_{t_i}=Y\cap V^\delta_{t_i}$ and we have $g_1(s),g_2(s)\leq c_{t_i}\leq f(s)$ for all $s\in \overline{V^Y_{t_{i}}}$. 
Hence we have $c_{t_i}\chi_{V^Y_{t_i}} \leq f|_Y$ for all $i$ by Corollary \ref{cor:car} (i).
By induction, there exists an open cover ${\mathcal W}=\{W_{j}\}_{j=1}^{r}$  of $Y$, with each $W_j$ contained in a $\delta/3$-ball, and a piecewise characteristic function $g_Y=\chi({\mathcal W},\phi)$ for $f|_Y$ such that 
\begin{equation}\label{updir} c_{t_i}\chi_{V^Y_{t_i}}\ll g_Y\ll f|_Y \ \text{ for all } i=1,\dots,k.\end{equation} 
% and ordered map $\phi\colon {\mathcal A_{\mathcal W}}\to M$.
Observe that whenever $\overline{W_j}\cap \overline{V_{t_i}}\neq \varnothing$, we have $\overline{W_j}\subseteq V_{t_i}^{\delta}\subseteq V_{t_i}'$. 
Now we use Lemma \ref{lem:pw} to obtain a piecewise characteristic function $h'=\chi(\mathcal W^\epsilon,\phi)$ for $f$ such that $g_Y\leq h'|_Y$. 
Decreasing $\epsilon$ if necessary, we can further assume that each $W_j^\epsilon$ is contained in a $\delta/2$-ball, and hence $\overline{W^\epsilon_{j}}\subseteq \overline{V_{t_i}^\delta}\subseteq V'_{t_i}$ whenever $\overline{W^\epsilon_j}\cap \overline{V_{t_i}}\neq \varnothing$.

Let $Y^{\epsilon}=\bigcup_{j=1}^{r} W^{\epsilon}_j$.

Put $U_i=W^{\epsilon}_i$ for $i=1,\dots,r$. For each 
$F_j\in \mathcal A_\mathcal V =\{F_1,\dots,F_{r'}\}$, 
let  $U_{r+j}$ be an $\epsilon/3-$neigh\-bour\-hood of $\bigcap_{i\in F_j}V_{t_i}\setminus (Y^\epsilon\cup (\bigcup_{k\not\in F_j}V_{t_k}))$. % for $j=1,\dots, r'$. 
Observe that $U_{r+j}\subseteq \bigcap_{i\in F_j}V_{t_i}$, and that $U_{r+l}\cap U_{r+l'}=\varnothing$ for all $l\neq l'$.

Next consider the cover ${\mathcal U}=\{U_1,\dots,U_{r},U_{r+1},\dots, U_{r+r'}\}$.
Observe that if $F\in {\mathcal A}_{\mathcal U}$, then either $F\in {\mathcal A}_{\mathcal W}$ or $F=F'\cup \{r+l\}$ 
for some $F'\in {\mathcal A}_{\mathcal W}$, or else $F=\{r+l\}$. Since $\overline{\mathcal W}$ has multiplicity at least $n$, we see that $\overline{\mathcal U}$ has multiplicity 
at least $n+1$.

Let $\varphi\colon {\mathcal A}_{\mathcal U}\to M$ be defined by

\[ 
\varphi(F)=\left\{ \begin{array}{lr} \phi(F) & \text{ if } F\in {\mathcal A}_{\mathcal W} \\
                      \phi(F') & \text{ if } F=F'\cup \{r+l\}\text{ for some } l\geq 1,\text{ and }  F'\in {\mathcal A}_{\mathcal W} \\
c_{t_{i_1}}\quad & \text{ if } F=\{r+l\},\text{ and } F_l=\{i_1<\dots<i_{k_l}\}  \end{array}\right.
\]
and let $h=\chi({\mathcal U},\varphi)$. 
We claim that $h$ is a piecewise characteristic function for $f$ such that $g_1,g_2\ll h$.

(i) $\varphi$ is an ordered map.

By definition of $A_\mathcal U$, we only have to consider the following cases:
\[ F'\subseteq F'',\quad F'\cup \{r+l\}\subseteq F''\cup \{r+l\},\quad    F'\subseteq F''\cup \{r+l\},\quad \{r+l\}\subseteq F'\cup \{r+l\},\]
where $F'\subseteq F''\in \mathcal A_{\mathcal W}$. By definition of $\varphi$ and since $\phi$ is an ordered map, 
the only non trivial case is $\{r+l\}\subseteq F'\cup \{r+l\}$.

Then, suppose $F_l=\{i_1<\dots<i_{k_l}\}$, and $F'_{\mathcal U,t}=F'\cup \{r+l\}$. 
This means $t\in\left( \bigcap_{j\in F'} \overline{W^\epsilon_j}\right)\cap \left(\bigcap_{j=1}^{k_l}\overline{V_{t_{i_j}}}\right)$. 
There exists $t_{Y}\in Y$ such that $F'=F'_{\mathcal W,t_Y}$. Given $j\in F'$, since $t\in \overline{W^{\epsilon}_j}\cap \overline{V_{t_{i_1}}}\neq \varnothing$, we have 
$\overline{W^\epsilon_j}\subseteq V'_{t_{i_1}}$, whence $t_Y\in V^Y_{t_{i_1}}$. Therefore, using (\ref{updir}), we have 

 \begin{align*}
 \varphi(\{r+l\}) & =c_{t_{i_1}}=c_{t_{i_1}}\cdot\chi_{V_{t_{i_1}}^Y}(t_Y) \\ & \ll g_Y(t_Y)\leq h'(t_Y)=\phi(F_{\mathcal W^\epsilon,t_Y}) \\ &\leq 
\phi(F'_{\mathcal W^\epsilon,t_Y})=\phi(F')=\varphi(F'\cup \{r+l\}),
\end{align*}
proving that $\varphi$ is an ordered map.

(ii)  $h$ is a piecewise characteristic function for $f$.

Let $t\in X$. If $t\in \overline{Y^{\epsilon}}$, then $F'_{\mathcal U,t}$ equals $F'_{\mathcal W^{\epsilon},t}$ or $F'_{\mathcal W^{\epsilon},t}\cup \{r+l\}$. Hence 
$\varphi(F'_{\mathcal U,t})=\phi(F'_{\mathcal W^\epsilon,t})\ll f(t)$ since $h'$ is a characteristic function for $f$. 

Otherwise, if $t\not\in \overline{Y^{\epsilon}}$, then $t\in U_{r+l}$ for some $l\geq 1$, and 
$F'_{\mathcal U,t}=F_{\mathcal U,t}=\{r+l\}$, where $F_l=\{i_1<\dots <i_l\}$. 
In particular,  $t\in V_{t_1}$, therefore $c_{t_1}\ll f(t)$, and we have
\[ \varphi(F'_{\mathcal U,t})=\varphi(\{r+l\})=c_{t_1}\ll f(t).\]
This proves that $h\in \chi(f)$.

(iii) $g_1\ll h$ and $g_2\ll h$.

Let $t\in X$. If $t\in W^\epsilon_j$ for some $j$, since $W^{\epsilon}_j\subseteq V'_{t_i}$ for some $i$, 
we have \[g_1(s),g_2(s)\ll c_{t_i}\ll g_Y(s)\ll h(s)\] for all $s\in W^{\epsilon}_j$.
Otherwise if $t\not\in Y^\epsilon$, we have $t\in U_{r+l}\subseteq V_{t_{i_1}}$, for some $l$ and $i_1$ the first element in $F_l$. Hence for all $s\in U_{r+l}$, 
\[g_1(s),g_2(s)\ll c_{t_1}=\varphi(\{r+l\})\leq \varphi(F_{\mathcal U,s})=h(s),\] 
proving that $g_1,g_2\ll h$. 
\end{proof}

\begin{proposition}
\label{lem:supseq}
Let $X$ be a finite dimensional topological space, let $M$ be an object in Cu, and let $f\in\Lsc(X,M)$. If $M$ is countably based, then $f$ is the supremum of a rapidly increasing sequence of elements from $\chi (f)$.
\end{proposition}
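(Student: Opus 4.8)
The plan is to combine the results already established: Proposition \ref{lem:sup}, which shows $f=\sup\{g\mid g\in\chi(f)\}$, and Proposition \ref{prop:updirected}, which shows that $\chi(f)$ is upwards directed with the additional feature that for any $g_1,g_2\ll f$ one can find $h\in\chi(f)$ with $g_1,g_2\ll h$. The remaining point is to extract from the (possibly uncountable) directed family $\chi(f)$ a \emph{countable} rapidly increasing subsequence whose supremum is still $f$, and this is precisely where the hypothesis that $M$ is countably based must be used.

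First I would fix a countable subset $B\subseteq M$ witnessing that $M$ is countably based, so that every element of $M$ is the supremum of a rapidly increasing sequence from $B$. The goal is to produce a countable subfamily $\mathcal{D}\subseteq\chi(f)$ that is cofinal in $\chi(f)$ in the sense that every $g\in\chi(f)$ is dominated (in the $\ll$ sense) by some member of $\mathcal{D}$; once such a cofinal countable set is in hand, the upward-directedness from Proposition \ref{prop:updirected} lets me build a rapidly increasing sequence $h_1\ll h_2\ll\cdots$ in $\chi(f)$ absorbing it, and then $\sup_n h_n=\sup\chi(f)=f$ by Proposition \ref{lem:sup}. To obtain countability, the idea is to note that a piecewise characteristic function $g=\chi(\mathcal U,\varphi)$ is determined by a finite open cover $\mathcal U$ together with the finitely many values of $\varphi$, and that $X$ being second countable and metrizable admits a countable base of open balls. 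I would restrict attention to covers $\mathcal U$ whose members are chosen from this countable base (the constructions in Propositions \ref{prop:updirected} and \ref{lem:pw} already produce covers by small balls, so this is no loss) and to maps $\varphi$ taking values in the countable set $B$; using condition (iii) of Proposition \ref{prop:charact} one checks that replacing the values of $\varphi$ by slightly smaller elements of $B$, compactly contained below the original values, still yields a piecewise characteristic function for $f$ that dominates any prescribed $g$. Since there are only countably many finite covers drawn from a countable base and only countably many $B$-valued ordered maps on the associated finite index structure, the resulting family $\mathcal D$ is countable.

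Having secured a countable cofinal $\mathcal D\subseteq\chi(f)$, I would enumerate it as $(d_k)_{k\geq1}$ and construct the desired sequence recursively: set $h_1=d_1$, and given $h_n$, apply Proposition \ref{prop:updirected} to the pair $h_n,d_{n+1}\ll f$ to obtain $h_{n+1}\in\chi(f)$ with $h_n,d_{n+1}\ll h_{n+1}$. By construction $(h_n)$ is rapidly increasing and every $d_k$ satisfies $d_k\ll h_{k+1}$, so $\sup_n h_n\geq\sup_k d_k=\sup\chi(f)=f$ by cofinality of $\mathcal D$ and Proposition \ref{lem:sup}; the reverse inequality $\sup_n h_n\leq f$ is immediate since each $h_n\ll f$. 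Thus $f=\sup_n h_n$ with $(h_n)$ rapidly increasing in $\chi(f)$, as required.

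The main obstacle I anticipate is the countability reduction in the middle paragraph: verifying carefully that one may restrict to covers from a fixed countable base and to $B$-valued maps \emph{without} shrinking the supremum. The delicate part is that a given $g\in\chi(f)$ need not itself use base-set covers or $B$-valued maps, so one must show that every such $g$ is nonetheless compactly dominated by a member of the restricted countable family. This requires perturbing both the open sets (enlarging them slightly to base elements, controlled exactly as in the $\epsilon$-neighbourhood argument of Lemma \ref{lem:pw}) and the values (approximating each $\varphi(F)$ from below by an element of $B$ lying in the interval $\varphi(F)\ll\,\cdot\,\ll f$, which the countably based hypothesis and the strict inequality $\varphi(F'_t)\ll f(t)$ make possible); keeping the ordered-map condition intact throughout the simultaneous perturbation of covers and values is the technical heart of the argument.
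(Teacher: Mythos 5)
Your overall architecture (extract a countable subfamily of $\chi(f)$, then use the directedness from Proposition \ref{prop:updirected} to turn it into a rapidly increasing sequence) is viable, and your final paragraph is correct as far as it goes. But the middle step, which you yourself identify as the technical heart, is where the proof actually lives, and as described it does not work. First, a direction error: replacing the values of $\varphi$ by \emph{smaller} elements of $B$ produces a function below $g$, not one dominating it, and Proposition \ref{prop:charact} has no ``condition (iii)''. More seriously, to dominate a given $g=\chi(\mathcal U,\varphi)$ by a $B$-valued piecewise characteristic function you would need, for each $F$, a single $b\in B$ with $\varphi(F)\ll b\ll f(s)$ simultaneously for all $s$ in the compact set $\bigcap_{i\in F}\overline{U_i}$; the countably based hypothesis only gives such an interpolant at each point $s$ separately, and different points may require different $b$'s. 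Gluing these local choices forces you to refine the cover, i.e.\ to re-run the whole induction of Proposition \ref{prop:updirected} — so the ``perturbation'' is not the routine check you suggest, and the cofinality claim is left unproved.

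The gap is repairable, because full $\ll$-cofinality is far more than you need: it suffices that the countable family $\mathcal D\subseteq\chi(f)$ be \emph{sup-dense}, i.e.\ that for every $t$ and every $a\ll f(t)$ some $d\in\mathcal D$ satisfy $a\le d(t)$; your recursive construction then still yields $\sup_n h_n(t)\ge a$ for all such $a$, hence $\sup_n h_n=f$. Such a family is easy to produce: take the single-set functions $b\cdot\chi_V$ with $b\in B$, $V$ in a fixed countable base and $\overline V\subseteq\{s\mid b\ll f(s)\}$; these lie in $\chi(f)$ exactly as in the proof of Lemma \ref{lem:sup}, and interpolation $a\ll b\ll f(t)$ with $b\in B$ is available pointwise. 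For comparison, the paper avoids all of this by a topological argument: it considers the open set $U_h=\{(t,a)\mid a\ll h(t)\}\subseteq X\times M$ (with $M$ carrying the Scott topology), notes that $U_f=\bigcup_{g\in\chi(f)}U_g$ by Lemma \ref{lem:sup}, and extracts a countable subcover by the Lindel\"of property of the second countable space $X\times M$ — this is where countable basedness of $M$ enters — before applying Proposition \ref{prop:updirected} exactly as you do.
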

\begin{proof}
For any function $h\colon X\to M$, put
\[
U_h=\{(t,a)\in X\times M\mid a\ll h(t)\}\,,
\]
which is an open set when $h$ is lower semicontinuous. We know from Lemma \ref{lem:sup} that $f=\sup\{g\mid g\in\chi(f)\}$, 
whence $U_f=\bigcup_{g\in\chi (f)}U_g$. Since by assumption $M$ is countably based, $X\times M$ has a countable basis, 
and so does $U_f$. As these spaces satisfy the Lindel\"of property, there is a sequence $(g_n)$ in $\chi (f)$ such that 
$U_f=\bigcup U_{g_n}$. The sequence $(g_n)$ may be taken to be rapidly increasing by virtue of Proposition \ref{prop:updirected}. 
This implies that $f=\sup g_n$, as was to be shown.
\end{proof}

Assembling the results above, we obtain the following: 

\begin{theorem} 
\label{thm:lscinCu}
Let $X$ be a second countable finite dimensional compact Hausdorff topological space, and let $M$ be an object in Cu. If $M$ is countably based, then $\Lsc(X,M)$ (with the pointwise order and addition)
is also a semigroup in Cu. 
\end{theorem}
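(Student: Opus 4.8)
The plan is to verify the three defining axioms of the category Cu for $\Lsc(X,M)$, namely conditions (i)--(iii) of Theorem \ref{th:CEI}, using the machinery developed in this section. Lemma \ref{lem:estructuramonoide} already establishes that $\Lsc(X,M)$ is an ordered semigroup (under pointwise operations) that is closed under suprema of increasing sequences, which is exactly axiom (i). It therefore remains only to check axiom (ii), that every element is the supremum of a rapidly increasing sequence, and axiom (iii), that suprema and the relation $\ll$ are compatible with addition.

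For axiom (ii), I would simply invoke Proposition \ref{lem:supseq}: given $f\in\Lsc(X,M)$, since $X$ is second countable, finite dimensional, compact, and Hausdorff (hence metrizable, as assumed throughout the section) and $M$ is countably based, that proposition produces a rapidly increasing sequence of piecewise characteristic functions in $\chi(f)$ whose supremum is $f$. This is the heart of the matter, but all the work has already been done in Proposition \ref{lem:supseq}, which in turn rests on Lemma \ref{lem:sup}, the directedness established in Proposition \ref{prop:updirected}, and the Lindel\"of/countable-basis argument.

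For axiom (iii), compatibility of $\ll$ with addition is precisely the content of Corollary \ref{cor:additionandwb}, so I would cite it directly. Compatibility of suprema with addition must still be addressed: if $(f_n)$ and $(g_n)$ are increasing sequences in $\Lsc(X,M)$, one needs $\sup_n(f_n+g_n)=(\sup_n f_n)+(\sup_n g_n)$. Since the order and the suprema in $\Lsc(X,M)$ are pointwise (by Lemma \ref{lem:estructuramonoide}(ii)), this reduces for each $t\in X$ to the identity $\sup_n(f_n(t)+g_n(t))=(\sup_n f_n(t))+(\sup_n g_n(t))$ in $M$, which holds because $M$ itself is an object in Cu and hence satisfies axiom (iii). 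One inequality is immediate from monotonicity; the reverse follows since addition in $M$ preserves suprema of increasing sequences.

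I do not expect any genuine obstacle at this stage, as the theorem is essentially an assembly of the preceding results. The only mild subtlety worth spelling out is that all the hypotheses needed to apply Proposition \ref{lem:supseq}---second countability, finite dimensionality, and countable basedness of $M$---are present in the statement, and that metrizability (used freely in Propositions \ref{prop:charact} and \ref{prop:updirected}) follows from $X$ being second countable, compact, and Hausdorff. The real difficulty of the whole development lies entirely in Proposition \ref{prop:updirected} and Lemma \ref{lem:pw}, where the inductive construction over the covering dimension of $X$ is carried out; by the time one reaches this theorem, that difficulty has been absorbed, and the proof is a short citation of Lemma \ref{lem:estructuramonoide}, Proposition \ref{lem:supseq}, and Corollary \ref{cor:additionandwb}.
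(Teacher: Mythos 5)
Your proposal is correct and follows exactly the paper's own argument, which proves the theorem by combining Lemma \ref{lem:estructuramonoide}, Corollary \ref{cor:additionandwb}, and Proposition \ref{lem:supseq}. The extra detail you supply on the pointwise compatibility of suprema with addition is a harmless and accurate elaboration of what the paper leaves implicit.
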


\begin{proof}
Combine Lemma \ref{lem:estructuramonoide}, Corollary \ref{cor:additionandwb} and Proposition \ref{lem:supseq}.
\end{proof}

We now proceed to study some functorial properties of $\Lsc$.

\begin{lemma}\label{lem:lscfunct}
 Let $X,Y$ be finite dimensional second countable compact Hausdorff spaces and $M,N$ be countably based semigroups in Cu. 
\begin{enumerate}[{\rm (i)}]
 \item If $f\colon X\to Y$ is a (proper) continuous map
then,
\[ \begin{array}{rccc}\Lsc(f,M)\colon & \Lsc(Y,M)& \longrightarrow &  \Lsc(X,M) \\ &
g & \longmapsto & g\circ f\end{array} \]
is an a map in Cu.
\item  If $\alpha\colon M\to N$ is a map in Cu, then 
\[\begin{array}{rccc}\Lsc(X,\alpha) \colon &  \Lsc(X,M)& \longrightarrow & \Lsc(X,N) \\ &
  g & \longmapsto & \alpha\circ g,\end{array}\]
is also a map in Cu.
\end{enumerate}
\end{lemma}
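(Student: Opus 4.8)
The plan is to check, for each of the two maps, the four defining properties of a morphism in Cu: additivity, preservation of the order, preservation of suprema of increasing sequences, and preservation of the relation $\ll$. Both target and source are objects in Cu by Theorem \ref{thm:lscinCu}, so what remains is these four properties together with well-definedness (i.e. that the image lands in the stated function space). Since all operations and the order on $\Lsc(-,-)$ are computed pointwise (Lemma \ref{lem:estructuramonoide}), I expect additivity, order-preservation, and supremum-preservation to be immediate in both items: in (i) they follow from precomposing with $f$, and in (ii) from the corresponding properties of $\alpha$, using in particular that $\alpha$ preserves suprema of increasing sequences in $M$. Thus the only substantive points are well-definedness and the preservation of $\ll$, and for the latter I plan to reduce everything to the local characterization of compact containment in Proposition \ref{prop:charact}.

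For (i), well-definedness is formal: for $g\in\Lsc(Y,M)$ and $a\in M$ one has $(g\circ f)^{-1}(a^{\ll})=f^{-1}\big(g^{-1}(a^{\ll})\big)$, which is open since $g$ is lower semicontinuous and $f$ is continuous, so $g\circ f\in\Lsc(X,M)$. To see that $\ll$ is preserved, I would assume $g_1\ll g_2$ in $\Lsc(Y,M)$, fix $x\in X$, and apply Proposition \ref{prop:charact} at the point $f(x)$ to get an open neighbourhood $U$ of $f(x)$ and $c\in M$ with $g_1(s)\le c\ll g_2(s)$ for all $s\in U$. Then $f^{-1}(U)$ is an open neighbourhood of $x$ on which $(g_1\circ f)(x')\le c\ll(g_2\circ f)(x')$, and a second application of Proposition \ref{prop:charact} gives $g_1\circ f\ll g_2\circ f$. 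In the compact setting properness of $f$ is automatic; the parenthetical hypothesis is what one would invoke to preserve the vanishing-at-infinity condition in a locally compact extension.

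For (ii), the crux is well-definedness, namely that $\alpha\circ g\in\Lsc(X,N)$ whenever $g\in\Lsc(X,M)$, and this is where I expect the real work to lie. I would use Lemma \ref{lem:easy}: fix $t\in X$ and $b\ll\alpha(g(t))$, and write $g(t)=\sup_n a_n$ with $(a_n)$ rapidly increasing. Since $\alpha$ preserves suprema, $\alpha(g(t))=\sup_n\alpha(a_n)$, so Remark \ref{rem:tontada} yields $N$ with $b\ll\alpha(a_N)$. As $a_N\ll a_{N+1}\le g(t)$ we have $a_N\ll g(t)$, and lower semicontinuity of $g$ provides an open neighbourhood $U_t$ of $t$ with $a_N\ll g(s)$ for all $s\in U_t$; since $\alpha$ preserves $\ll$ this gives $b\ll\alpha(a_N)\ll\alpha(g(s))$ on $U_t$, so $\alpha\circ g\in\Lsc(X,N)$ by Lemma \ref{lem:easy}.

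Preservation of $\ll$ in (ii) then parallels (i): from $g_1\ll g_2$ and Proposition \ref{prop:charact} I obtain, for each $t\in X$, a neighbourhood $U_t$ and $c_t\in M$ with $g_1(s)\le c_t\ll g_2(s)$ on $U_t$; applying $\alpha$ and using that it preserves the order and $\ll$ gives $(\alpha\circ g_1)(s)\le\alpha(c_t)\ll(\alpha\circ g_2)(s)$ on $U_t$, whence $\alpha\circ g_1\ll\alpha\circ g_2$ by Proposition \ref{prop:charact}. The main obstacle in the whole argument is precisely the well-definedness step in (ii): unlike (i), it is not a formal consequence of continuity but requires coordinating the supremum- and $\ll$-preservation of $\alpha$ with the lower semicontinuity of $g$, which is exactly the role played by Remark \ref{rem:tontada} and Lemma \ref{lem:easy}.
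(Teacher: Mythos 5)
Your proposal is correct and follows essentially the same route as the paper: pointwise verification of additivity, order- and supremum-preservation, the characterization of $\ll$ from Proposition \ref{prop:charact} for compact containment in both items, and for well-definedness in (ii) the same interplay of a rapidly increasing sequence below $g(t)$ with supremum-preservation of $\alpha$ and lower semicontinuity of $g$ (the paper gets $a\le\alpha(b_{n_0})$ directly from the definition of $\ll$ where you invoke Remark \ref{rem:tontada}, an immaterial difference).
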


\begin{proof}
(i) It is easy to see that the map is well defined since $f$ is continuous.  It also preserves order, addition and suprema,
 since those are defined pointwise. 
To prove preservation of compact containment, we will use Proposition \ref{prop:charact}.
Assume $g_1\ll g_2$.  
%Since the support of $g_1$ is contained in a compact set $K$, then the support of $g_1\circ f$ is contained in 
%$f^{-1}(K)$ which is compact since $f$ is a proper map. 
For each $t\in X$, since $f(t)\in Y$ and $g_1\ll g_2$, there exists $c_{f(t)}\in M$ 
and $V_{f(t)}$ an open neighbourhood of $f(t)$ such that $g_1(s)\leq c_{f(t)}\ll g_2(s)$ for all $s\in V_{f(t)}$. Hence, 
$(g_1\circ f)(s) \leq c_{f(t)} \ll  (g_2\circ f)(s)$ for all $s\in f^{-1}(V_{f(t)})$ which is an open neighbourhood of $t$. Therefore
$g_1\circ f \ll g_2\circ f$.

(ii) Let us first see that $\Lsc(X,\alpha)$ is well defined, which is to say, 
$\Lsc(X,\alpha)(g)\in \Lsc(X,N)$ for all $g\in \Lsc(X,M)$. Let $g\in \Lsc(X,M)$ be fixed, and let $x\in X$
and $a\in N$ be such that  $a\ll \Lsc(X,\alpha)(g)(x)$. Choose a rapidly increasing sequence $(b_n)_{n\in \N}$
in $M$ such that  $\sup_nb_n=f(x)$. Since $a\ll \Lsc(X,\alpha)(g)(x)=\sup_n\alpha(b_n)$ there exists $n_0\ge 1$
such that  $a\le \alpha(b_{n_0})$. Since $g$ is lower semicontinuous and $b_{n_0}\ll g(x)$ there exists a
neighbourhood  $U$ of $x$ such that $b_{n_0}\ll g(y)$, for all $y\in U$. Hence, it follows that 

 \[
 a\le \alpha(b_{n_0})\ll \alpha(g(y))=\Lsc(X,\alpha)(g)(y),
 \] 
 for all $y\in U$. Since $x$ and $a$ are arbitrary this implies that $\Lsc(X,\alpha)(g)\in \Lsc(X,N)$.
 
 It is clear that $\Lsc(X,\alpha)$ preserves the zero element, the order, and suprema of increasing sequences
 since $\alpha$ does. To complete the proof let us show that $\Lsc(X,\alpha)$ preserves compact containment. 
Let $g,h\in \Lsc(X,M)$ be such that $g\ll h$. By Proposition \ref{prop:charact} for all $x\in X$ there exist
 $c_x\in M$ and a neighbourhood $U$ of $x$ such that $g(y)\le c_x\ll h(y)$, for all $y\in U$. It follows that 
 \[
 \Lsc(X,\alpha)(g)(y)=\alpha(g(y))\le \alpha(c_x)\ll \alpha(h(y))=\Lsc(X,\alpha)(h)(y),
 \]
 for all $y\in U$. Therefore, by applying Proposition \ref{prop:charact} again we conclude that $\Lsc(X,\alpha)(g)\ll \Lsc(X,\alpha)(h)$.
\end{proof}

As a consequence of Lemma \ref{lem:lscfunct} and Theorem \ref{thm:lscinCu} we obtain

\begin{theorem}\label{thm:functor}
Let $X$ be a finite dimensional second countable compact Hausdorff space and let $M$ be a countably based semigroup in Cu.
Then $\Lsc(\cdot,M)$ defines a contravariant functor from the category of 
finite dimensional, second countable, compact Hausdorff topological spaces to Cu, and $\Lsc(X,-)$ defines a covariant
functor from the category of countably based semigroups in Cu to Cu.
\end{theorem}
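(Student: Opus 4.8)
The plan is to assemble the two functors directly from the two results just established, handling the object level and the morphism level separately and then checking the categorical laws, which I expect to be purely formal. For the object assignment in both statements I would invoke Theorem \ref{thm:lscinCu}: with $M$ (respectively each countably based semigroup in Cu) countably based and each $X$ a second countable, finite-dimensional, compact Hausdorff space, the semigroup $\Lsc(X,M)$ is again an object of Cu. This is precisely the point that needs Theorem \ref{thm:lscinCu} and confirms that both assignments have the correct target category on objects; I regard this as the only place where genuine content is used, the rest being bookkeeping.

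For the morphism level I would appeal to Lemma \ref{lem:lscfunct}. Part (i) sends a continuous map $f\colon X\to Y$ to the Cu-morphism $\Lsc(f,M)\colon \Lsc(Y,M)\to\Lsc(X,M)$, $g\mapsto g\circ f$, and part (ii) sends a Cu-morphism $\alpha\colon M\to N$ to the Cu-morphism $\Lsc(X,\alpha)\colon \Lsc(X,M)\to\Lsc(X,N)$, $g\mapsto\alpha\circ g$. Here I would note that, since all spaces in our category are compact Hausdorff, every continuous map is automatically proper, so the parenthetical hypothesis in Lemma \ref{lem:lscfunct}(i) imposes no restriction. Thus morphisms are sent to morphisms of Cu in both cases.

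It then remains to verify the identity and composition laws, and these follow formally because both assignments on morphisms are given by pre- or post-composition with a fixed map. For $\Lsc(\cdot,M)$ one has $\Lsc(\mathrm{id}_X,M)(g)=g\circ\mathrm{id}_X=g$, while for continuous maps $f\colon X\to Y$ and $h\colon Y\to Z$ associativity of composition gives
\[
\Lsc(h\circ f,M)(g)=g\circ(h\circ f)=(g\circ h)\circ f=\bigl(\Lsc(f,M)\circ\Lsc(h,M)\bigr)(g),
\]
so $\Lsc(\cdot,M)$ is contravariant. Dually, $\Lsc(X,\mathrm{id}_M)(g)=\mathrm{id}_M\circ g=g$, and for Cu-morphisms $\alpha\colon M\to N$ and $\beta\colon N\to P$,
\[
\Lsc(X,\beta\circ\alpha)(g)=(\beta\circ\alpha)\circ g=\beta\circ(\alpha\circ g)=\bigl(\Lsc(X,\beta)\circ\Lsc(X,\alpha)\bigr)(g),
\]
exhibiting $\Lsc(X,-)$ as covariant. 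Since the substantive work is already contained in Lemma \ref{lem:lscfunct} and Theorem \ref{thm:lscinCu}, there is no real obstacle at this stage; the only point deserving a moment's care is that the target of $\Lsc(X,-)$ lands in Cu for every countably based argument, which is exactly what Theorem \ref{thm:lscinCu} supplies.
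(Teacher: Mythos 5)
Your proposal matches the paper's argument exactly: the paper derives this theorem as an immediate consequence of Theorem \ref{thm:lscinCu} (objects) and Lemma \ref{lem:lscfunct} (morphisms), leaving the identity and composition laws as the routine verification you spell out. Your remark that properness is automatic for maps between compact Hausdorff spaces is a correct and harmless addition.
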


%\subsection{Continuity of the functors $\Lsc(\cdot, M)$ and $\Lsc(X, \cdot)$}

This functor is seen to be sequentially continuous in the following cases:

\begin{proposition}\label{prop:limits}
\begin{enumerate}[{\rm (i)}]
\item Let $(X_i,\mu_{i,j})_{i, j\in \N}$ be an inverse system of compact Hausdorff one-di\-men\-sio\-nal
spaces with surjective maps $\mu_{i,j}\colon X_j\to X_i$ for $j\geq i$.
% and let $X=\varprojlim(X_i,\mu_{i,j})_{i,j\in \N}$ with $\mu_i\colon X\to X_i$ the canonical maps. 
If $M$ is a countably based semigroup in $\Cu$, then 
\[\Lsc(\varprojlim(X_i,\mu_{i,j}),M)=\varinjlim(\Lsc(X_i,M),\Lsc(\mu_{i,j},M))\,.\]

\item  Let $(M_i,\alpha_{i,j})_ {i,j\in\N}$ be a directed system of countably based semigroups in Cu.
% and let $M=\varinjlim (M_i,\alpha_{i,j})$ with $\alpha_i\colon M_i\to M$ the canonical maps.
If $X$ is a second countable, compact, Hausdorff space, then, 
 \[ \Lsc(\varinjlim (M_i,\alpha_{i,j}),M)=\varinjlim (\Lsc(X,M_i),\Lsc(X,\alpha_{i,j}))\,. \]
\end{enumerate}
\end{proposition}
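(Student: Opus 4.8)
The plan is to show, in both parts, that the canonical cone of Cu\-morphisms furnished by Lemma~\ref{lem:lscfunct} realises the target semigroup as the inductive limit in Cu. Recall from \cite{CEI} that, given an inductive system $(S_k,\phi_{k,l})$ in Cu and an object $S$ together with compatible morphisms $\gamma_k\colon S_k\to S$ (so that $\gamma_l\circ\phi_{k,l}=\gamma_k$), the pair $(S,(\gamma_k))$ is the inductive limit $\varinjlim(S_k,\phi_{k,l})$ if and only if
\begin{enumerate}[{\rm (C1)}]
\item for every $s\in S$ and every $s'\ll s$ there are $k$ and $x\in S_k$ with $s'\le\gamma_k(x)\le s$; and
\item whenever $x,y\in S_k$ satisfy $\gamma_k(x)\le\gamma_k(y)$, then for each $x'\ll x$ there is $l\ge k$ with $\phi_{k,l}(x')\le\phi_{k,l}(y)$.
\end{enumerate}
I shall verify (C1) and (C2) for the two cones $\gamma_i=\Lsc(\mu_i,M)$ (part (i)) and $\gamma_i=\Lsc(X,\alpha_{i,\infty})$ (part (ii)), where $\mu_i\colon X=\varprojlim X_j\to X_i$ are the limit projections and $\alpha_{i,\infty}\colon M_i\to M'=\varinjlim M_j$ the limit maps; throughout I use freely that $\Lsc(Y,N)$ lies in Cu (Theorem~\ref{thm:lscinCu}) and that the displayed functors preserve $\ll$ (Lemma~\ref{lem:lscfunct}).

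For part (i) the maps $\mu_i$ are surjective, which makes (C2) immediate: if $x\circ\mu_i\le y\circ\mu_i$ pointwise then, evaluating at preimages, $x\le y$ in $\Lsc(X_i,M)$, so one may take $l=i$. The content is (C1). Given $f\in\Lsc(X,M)$ and $f'\ll f$, Lemma~\ref{lem:sup} together with the upward directedness of $\chi(f)$ (Proposition~\ref{prop:updirected}) yields a piecewise characteristic function $h=\chi(\mathcal U,\varphi)\in\chi(f)$ with $f'\ll h$. The key step is to realise $h$ at a finite stage: since the sets $\mu_i^{-1}(V)$ (with $V\subseteq X_i$ open) form a basis of the inverse limit topology, and since a directed intersection of the compact sets $\mu_i^{-1}(\overline V)$ shrinking to a point contained in an open set already has one member inside that open set, one can choose a single stage $i$ and open sets $V_1,\dots,V_m\subseteq X_i$ whose preimages reproduce, up to the combinatorial data $\mathcal A_{\mathcal U}$ and the containments $\varphi(F)\ll f$ recorded in Definition~\ref{def:pw}, the family $\mathcal U$ and its closures. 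Using that $X_i$ is one-dimensional (to keep the multiplicity of $\{\overline{V_k}\}$ at most $2$) exactly as in Lemma~\ref{lem:pw}, the function $\tilde h:=\chi(\{V_k\},\varphi)$ lies in $\Lsc(X_i,M)$ and satisfies $h\le\gamma_i(\tilde h)=\tilde h\circ\mu_i\ll f$. Then $f'\ll h\le\gamma_i(\tilde h)\le f$, which is (C1).

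For part (ii) I again produce, for $f\in\Lsc(X,M')$ and $f'\ll f$, a piecewise characteristic function $h=\chi(\mathcal U,\varphi)\in\chi(f)$ with $f'\ll h$. Now $\varphi$ has finite domain $\mathcal A_{\mathcal U}$, and each value satisfies $\varphi(F)\ll f(t)$ on the compact set $W_F=\bigcap_{k\in F}\overline{U_k}$ (as in the proof of Lemma~\ref{lem:pw}). Applying (C1) and (C2) for the system $M'=\varinjlim M_j$ to these finitely many elements and the finitely many order relations among them, one obtains a single stage $i$ and an ordered map $\hat\varphi\colon\mathcal A_{\mathcal U}\to M_i$ with $f'(t)\le\alpha_{i,\infty}(\hat\varphi(F_t))$ and $\alpha_{i,\infty}(\hat\varphi(F))\le\varphi(F)$; then $g:=\chi(\mathcal U,\hat\varphi)\in\Lsc(X,M_i)$ satisfies $f'\le\gamma_i(g)=\alpha_{i,\infty}\circ g\le h\le f$, proving (C1). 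For (C2), suppose $g_1,g_2\in\Lsc(X,M_i)$ with $\alpha_{i,\infty}\circ g_1\le\alpha_{i,\infty}\circ g_2$ and $g_1'\ll g_1$. By Proposition~\ref{prop:charact}, for each $t$ there are a neighbourhood $W_t$ and $c_t\in M_i$ with $g_1'(s)\le c_t\ll g_1(s)$ for $s\in W_t$; interpolating $c_t\ll e_t\ll g_1(t)$ and pushing forward, $\alpha_{i,\infty}(e_t)\ll\alpha_{i,\infty}(g_2(s))$ for $s$ near $t$. The heart of the matter is the following uniformity statement: if $a\in M_i$ and $K\subseteq X$ is compact with $\alpha_{i,\infty}(a)\ll\alpha_{i,\infty}(g_2(s))$ for all $s\in K$, then for every $a'\ll a$ there is $j\ge i$ with $\alpha_{i,j}(a')\le\alpha_{i,j}(g_2(s))$ for all $s\in K$. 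This is proved by choosing $j_s$ with $\alpha_{i,j_s}(a)\le\alpha_{i,j_s}(g_2(s))$ from the order condition for $M'$, upgrading $a'\ll a$ to the open condition $\alpha_{i,j_s}(a')\ll\alpha_{i,j_s}(g_2(\cdot))$ via lower semicontinuity of $\alpha_{i,j_s}\circ g_2$, and extracting a finite subcover of $K$. Applying it with $a=e_t$, $a'=c_t$ and covering $X$ by finitely many $W_t$ yields a single $j$ with $\alpha_{i,j}\circ g_1'\le\alpha_{i,j}\circ g_2$.

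The main obstacle in both parts is the passage from pointwise information to data defined at a single finite stage. In part (i) this is the topological realisation of a piecewise characteristic function as the pullback of one living on some $X_i$, where the delicate point is to transport the closures $\overline{U_k}$ — hence the incidence data $\mathcal A_{\mathcal U}$ and the multiplicity bound — faithfully across $\mu_i$; here surjectivity of the bonding maps and one-dimensionality of the $X_i$ are essential. In part (ii) it is the mismatch between $\le$ and $\ll$ in the uniformity statement, circumvented by always working with an interpolating element (the pair $c_t\ll e_t$) so that the open conditions supplied by lower semicontinuity can be combined over a finite subcover of the compact space $X$.
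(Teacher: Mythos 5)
Your overall strategy---verifying the two standard conditions characterizing sequential inductive limits in Cu by means of piecewise characteristic functions, Lemma \ref{lem:sup}, and Propositions \ref{prop:charact} and \ref{prop:updirected}---is the same as the paper's, and your part (ii) matches the paper's proof in all essentials: the paper likewise localizes via Proposition \ref{prop:charact}, pushes the finitely many values and order relations of a piecewise characteristic function down to a single finite stage, and uses compactness of $X$ to extract one index $j$; your ``uniformity statement'' is exactly the paper's argument for its condition (b). In part (i), your observation that surjectivity of the $\mu_i$ makes each $\Lsc(\mu_i,M)$ an order-embedding, so that (C2) holds trivially with $l=i$, is correct and genuinely shorter than the compactness argument the paper runs for the same condition.

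The one place where you diverge from the paper is condition (C1) of part (i), and there your write-up has a real gap. The paper approximates each $U_j\in\mathcal U$ \emph{from the inside} by preimages $\mu_k^{-1}(U_{j,k})\subseteq U_j$, producing pullback functions $f_k=\rho_k(g_k)\le f$ with $f=\sup_k f_k$, from which density is immediate. You instead want to realize $h=\chi(\mathcal U,\varphi)$ at a finite stage \emph{from the outside}: open sets $V_k\subseteq X_i$ with $U_k\subseteq\mu_i^{-1}(V_k)$, the closures $\overline{V_k}$ still of multiplicity $\le 2$, the incidence data $\mathcal A$ preserved, and $\varphi(F)\ll f$ on $\bigcap_{k\in F}\mu_i^{-1}(\overline{V_k})$. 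This is the crux of part (i), and the justification you offer (compact sets shrinking to a \emph{point}) does not address it. What is actually needed is that $\bigcap_i\mu_i^{-1}\bigl(\mu_i(\overline{U_k})\bigr)=\overline{U_k}$, so that for any open $W\supseteq\overline{U_k}$ supplied by Corollary \ref{cor:car} some stage satisfies $\mu_i^{-1}(\mu_i(\overline{U_k}))\subseteq W$, after which $V_k:=X_i\setminus\mu_i(W^c)$ works because $\mu_i$ is a closed map; one must then also check, for $i$ large, that $\bigcap_{k\in F}\overline{V_k}=\varnothing$ whenever $\bigcap_{k\in F}\overline{U_k}=\varnothing$ (surjectivity of $\mu_i$ reduces this to the corresponding statement for preimages). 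None of this is unfixable, but as written the step is asserted rather than proved, and it is the harder direction; either supply the outer-approximation argument above or switch to the paper's inner approximation $f=\sup_k\rho_k(g_k)$, from which $f'\ll h\le f$ yields $f'\le\rho_k(g_k)$ for some $k$ directly.
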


\begin{proof}

Recall that the category Cu has limits of inductive sequences (see e.g.  \cite{CEI}),
and that given a directed set $(S_i,\gamma_{i,j})$, a semigroup $S$ with 
maps $\gamma_i:S_i\to S$  is the directed limit $\varinjlim (S_i,\gamma_{i,j})$ if and only if the following two conditions are satisfied:
\begin{enumerate}[{\rm (a)}]
 \item For all $s\in S$, $s=\sup_i\gamma_i(s_i)$ for some $s_i\in S_i$.
 \item If $s_i \in S_i$ and $s_j \in S_j$ are such that $\gamma_i(s_i)\leq \gamma_j(s_j)$, then,
 for all $x\ll s_i$ there exists $k\in \N$ such that $\gamma_{i,k}(x)\ll \gamma_{j,k}(s_j)$. 
\end{enumerate}

(i). Let $X=\varprojlim(X_i,\mu_{i,j})_{i,j\in \N}$ with $\mu_i\colon X\to X_i$ the canonical maps. 
Note that $X$ is also a  one-dimensional compact Hausdorff space and that the canonical maps $\mu_i$ are all surjective. 
The open sets in $X$ can be described as $\bigcup_{i\in \N} \mu_i^{-1}(U_i)$ where each $U_i$ is an open set in $X_i$. Furthermore, the 
$U_i$'s can be chosen in such a way that $\mu_i^{-1}(U_i)\subseteq \mu_{j}^{-1}(U_j)$ if $i\leq j$ (see, e.g. \cite[Propositions 1-7.1 and 1-7.5]{pears}). 
%[{\bf this might be standard and well known, A.R. Pears ``Dimension theory of General Spaces'' Propositions 1-7.1 and 1-7.5 contains what is needded}]

By Theorem \ref{thm:lscinCu},  both $\Lsc(X_i,M)$ and $\Lsc(X,M)$ are objects in Cu. Therefore, using Theorem \ref{thm:functor},
we obtain a directed system $(\Lsc(X_i,M),\rho_{i,j})_{i,j\in \N}$ in Cu, with maps 
$\rho_i\colon \Lsc(X_i,M)\to \Lsc(X,M)$ given by $\rho_i(f)=f\mu_i$, and $\rho_{i,j}\colon \Lsc(X_i,M)\to \Lsc(X_j,M)$ given by
$\rho_{i,j}(g)=g\mu_{i,j}$ whenever $i\leq j$. 

To prove condition (a) above, since $f\in\Lsc(X,M)$ can be described as the supremum of a sequence in $\chi(f)$,
we may assume that $f$ itself is a piecewise characteristic function. Hence suppose $f=\chi(\mathcal U,\phi)$ where 
$\mathcal U=\{U_j\}_{j=1}^r$ is a family of open sets such that $\overline{\mathcal U}=\{\overline U_j\}_{j=1}^r$ has multiplicity
at most one, and $\phi:\mathcal A_{\mathcal U}\to M$ is an ordered map. Let us write each $U_j$ as $\bigcup_{i\in \N}\mu_i^{-1}(U_{j,i})$ for some open sets $U_{j,i}$
in $X_i$ and such that $\mu_i^{-1}(U_{j,i_1})\subseteq \mu_i^{-1}(U_{j,i_2})$ if $i_1\leq i_2$.
Now, for each $k\geq 1$ we consider the family of open sets $\mathcal U_{k}=\{\mu_k^{-1}(U_{j,k})\}_{j=1}^{r}$. We observe that 
both $\mathcal U_k$ and $\overline{\mathcal U_k}$ have multiplicity at most one. 

For each $k\geq 1$ we consider the map 
\[ \begin{array}{ccl}\phi_k\colon \{ F_{\mathcal U_k,t}\ \mid \ t\in X\}  & \rightarrow & M \\ F & \mapsto &  \left\{ \begin{array}{lr} \phi(F) & \text { if }   F\in\mathcal A_{\mathcal U}\\
                                                                      0 & \text{ otherwise.}
                                                                     \end{array}\right.\end{array}\]
and $f_k\colon X\to M$ defined by $f_k(t)=\phi_k(F_{\mathcal U_k,t})$. Following the proof of Lemma~\ref{lem:proppw}, if $\phi_k$
is an ordered map then $f_k$ is lower semicontinuous. But since $\phi$ is already an ordered map we only need to check the case 
$F_1\subset F_2$ with $F_2\not \in \mathcal A_{\mathcal U}$ and $F_1\in \mathcal A_{\mathcal U}$. 
In this case, since $F_2=F_{\mathcal U_k,t}$ for some $t$, there exists some $F_3\in \mathcal A_{\mathcal U}$
such that $F_2\subset F_3$. But $\overline{\mathcal U}$ has multiplicity at most one and therefore subsets in $\mathcal A_{\mathcal U}$
have at most two elements. Therefore  $F_3=\{i,j\}$,  $F_2=\{i\}$ and $F_1=\varnothing$. Thus we obtain $\phi_k(F_1)=\phi_k(\varnothing)=0\leq \phi_k(F_2)$.

For all $t\in X$, there exists $k\in \N$ such that $F_{\mathcal U_k,t}=F_{\mathcal U,t}$. Hence, for all $t\in X$ there exists $k$
such that $f(t)=\phi(F_{\mathcal U,t})=\phi_{k}(F_{\mathcal U_k,t})=f_k(t)$, and since supremum in $\Lsc(X,M)$ is the pointwise supremum,
we obtain $f=\sup_{k\in \N} f_k$. Now we consider $\mathcal V_k=\{U_{j,k}\}_{j=1}^r$. Then $t\in \mu_k^{-1}(U_{j,k})$ if and only if $\mu_k(t)\in U_{j,k}$. It follows that
$F_{\mathcal V_k,\mu_k(t)}=F_{\mathcal U_k,t}$, and hence, taking $g_k(s)=\phi_k(F_{\mathcal V_k,s})$, we have $g_k\in \Lsc(X_k,M)$ 
and $f_k=\rho_k(g_k)$ since $f_k(t)=\phi_k(F_{\mathcal U_k,t})=\phi_k(F_{\mathcal V_k,\mu_k(t)})=g_k\mu_k(t)$.

We now prove condition (b). Suppose that for some $i\leq j$,  $g_i\in\Lsc(X_i,M)$ and $g_j\in \Lsc(X_j,M)$ are such that $\rho_i(g_i)\leq \rho_j(g_j)$.
Let $h\ll g_i$ and hence $\rho_i(h)\ll \rho_i(g_i)\leq \rho_j(g_j)$ which implies $\rho_i(h)\ll \rho_j(g_j)$. 
Using Proposition \ref{prop:charact}, there is for each $t\in X$ an open neighbourhood $V_t$ of $t$ and
$c_t\in M$ such that \[ \rho_i(h)(s)\ll c_t \ll \rho_j(g_j)(s)\ \text{ for all } s\in V_t.\]
We may assume $V_t=\mu_{i_t}^{-1}(U_{t})$ for some open set $U_{t}\subseteq X_{i_t}$, where $i_t\geq i,j$. 
Then, for all $s\in V_t$,
%\begin{align}
%  & \rho_i(h)(s)\ll c_t \ll \rho_j(g_j)(s)\ \text{ for all } s\in V_t, \notag  \\
%\Leftrightarrow \quad & h\mu_i(s)\ll c_t \ll g_j \mu_j(s) \ \text{ for all } s\in V_t, \notag \\
%\Leftrightarrow \quad & h\mu_{i_t,i}\mu_{i_t}(s) \ll c_t \ll g_j\mu_{i_t,j}\mu_{i_t}(s) \ \text{ for all } s\in V_t, \notag \\
%\Leftrightarrow \quad & h\mu_{i_t,i}(s')\ll c_t \ll g_j\mu_{i_t,j}(s') \ \text{ for all } s'\in U_{t}, \notag \\
%\Leftrightarrow \quad & \rho_{i_t,i}h(s')\ll c_t \ll  \rho_{i_t,j}g_j(s') \ \text{ for all } s'\in U_{t}. \label{wb}
%\end{align}
\[\rho_i(h)(s)\ll c_t \ll \rho_j(g_j)(s)\ \Leftrightarrow  h\mu_i(s)\ll c_t \ll g_j \mu_j(s) 
\Leftrightarrow   h\mu_{i_t,i}\mu_{i_t}(s) \ll c_t \ll g_j\mu_{i_t,j}\mu_{i_t}(s)\,,
\]
and hence, for all $s'\in U_t$,
\begin{equation}
  h\mu_{i_t,i}(s')\ll c_t \ll g_j\mu_{i_t,j}(s') 
\Leftrightarrow   \rho_{i_t,i}h(s')\ll c_t \ll  \rho_{i_t,j}g_j(s') \,. \label{wb}
\end{equation}
%$\rho_i(h)(V_t)=h\mu_i(V_t)=h\mu_{i_t,i}\mu_{i_t}(V_t)=h\mu_{i_t,i}(U_t)=\rho_{i_t,i}h(U_t)$ 

Using compactness of $X$, we obtain a finite number of open sets $V_{t_i}$ such that $X=\bigcup_{i=1}^r V_{t_i}$, and we can moreover choose $i_{t_1}=\dots=i_{t_r}=k$
for some $k\in \N$. Since $\mu_k$ is surjective then $X_k=U_{t_1}\cup\dots \cup U_{t_r}$ which, together with
(\ref{wb}) and Proposition \ref{prop:charact} proves $\rho_{k,i}(h)\ll \rho_{k,j}(g_j)$.

(ii). Let $M=\varinjlim (M_i,\alpha_{i,j})$ with $\alpha_i\colon M_i\to M$ the canonical maps. 
Let $f,g\in \Lsc(X,M_i)$ be such that $\Lsc(X,\alpha_{i})(f)\le \Lsc(X, \alpha_{i})(g)$, and let $h\ll f$. By Proposition \ref{prop:charact} for all $x\in X$ there exist $c_x\in M$ and a neighbourhood $U$ of $x$ such that $h(y)\le c_x\ll f(y)$, for all $y\in U$. We have 
 \[
  \alpha_{i}(f(x))=\Lsc(X,\alpha_{i})(f)(x)\le \Lsc(X, \alpha_{i})(g)(x)=\alpha_{i}(g(x)).
 \]
 Hence, since $c_x\ll f(x)$ by the characterization of inductive limits in the category Cu that there exists $j\ge i$ such that 
 \[
 \alpha_{i,j}(c_x)\ll \alpha_{i,j}(g(x))=\Lsc(X, \alpha_{i,j})(g)(x).
 \]
 By the lower semicontinuity of the function $\Lsc(X, \alpha_{i,j})(g)$ there exists a neighbourhood $V$ of $x$ such that $\alpha_{i,j}(c_x)\ll\Lsc(X, \alpha_{i,j})(g)(y)$, for all $y\in V$. Since $h(y)\le c_x$ for all $y\in U$ it follows that
 \begin{align*}\label{eq: inj}
 \Lsc(X,\alpha_{i,j})(h)(y)=\alpha_{i,j}(h(y))\le \alpha_{i,j}(c_x)\ll \Lsc(X, \alpha_{i,j})(g)(y),
 \end{align*}
 for all $y\in U\cap V$.
 
 We have shown that for each $x\in X$ there exist $j\ge i$ and a neighbourhood $W$ of $X$ such that $\Lsc(X,\alpha_{i,j})(h)(y)\le \Lsc(X, \alpha_{i,j})(g)(y)$, for all $y\in W$. Therefore, by the compactness of $X$ we may choose $j\ge i$ such that $\Lsc(X,\alpha_{i,j})(h)(y)\le \Lsc(X, \alpha_{i,j})(g)(y)$, for all $y\in X$. It follows now that $\Lsc(X,\alpha_{i,j})(h)\le \Lsc(X, \alpha_{i,j})(g)$.
This proves condition (b).
 
Observe that condition (a) above is equivalent to saying that $\bigcup_i\Lsc(X,\alpha_{i})(\Lsc(X,M_i))$
forms a dense subset in $\Lsc(X,M)$.  
Let $g_1,g_2\in \Lsc(X,M)$ be such that $g_1\ll g_2$. By Proposition~\ref{prop:updirected} (and its proof), there exists a piecewise characteristic function $h=\chi(\mathcal U,\varphi)$ such that $g_1\ll h\ll g_2$ whose range can be chosen in a dense subset of $M$.

Since $\bigcup_{i\geq 1}\alpha_{i}(M_i)$ forms a dense subset of $M$, and a piecewise characteristic function
takes a finite number of values in $M$, we can find  $k\geq 1$  such that furthermore $\varphi(\mathcal A_{\mathcal U})\subseteq \alpha_{k}(M_k)$.
For each $F\in \mathcal A_{\mathcal U}$ let us write $\varphi(F)=\alpha_{k}(c_{F})$ for some $c_F\in M_k$. Recall that $\varphi$ is an ordered map
and, in fact, by the proof of Proposition~\ref{prop:updirected}, we actually have $\alpha_{k}(c_F)\ll \alpha_{k}(c_{F'})$ whenever $F\subsetneq F'$.
Let us write each $c_F$ as the supremum of a rapidly increasing sequence in $M_k$, $c_F=\sup_i c_F^i$. Hence, for each 
$N\geq 0$ there exists $i_N$ such that $i_N\geq N$ and $\alpha_k(c_F)\ll \alpha_k(c_{F'}^{i_N})$ whenever $F\subsetneq F'$. Now, using that $M=\varinjlim_i M_i$ and
since $\mathcal A_{\mathcal U}$ has a finite number of inequalities, we can choose $l_N\geq k$
such that $\alpha_{k,l_N}(c^{i_N}_F)\leq \alpha_{k,l_N}(c^{i_N}_{F'})$ for all $F\subsetneq F'$. 
Therefore $\varphi_{N}(F):=\alpha_{k,{l_N}}(c^{i_N}_F)$ defines an ordered map $\varphi_N\colon\mathcal A_{\mathcal U}\to M_{l_N}$, and 
$h_{N}:=\chi(\mathcal U,\varphi_{N})$ is a piecewise characteristic function in $\Lsc(X,M_{l_N})$. It follows that $h=\sup_N \Lsc(X,\alpha_{l_N})(h_{N})$. 
Since $g_1\ll h \ll g_2$
there exists $N_0$ such that $g_1\ll \Lsc(X,\alpha_{l_{N_0}})(h_{N_0})\ll g_2$, proving that 
the images $\Lsc(X,\alpha_{i})(\Lsc(X,M_i))$ form a dense subset in $\Lsc(X,M)$.
\end{proof}

%%%%%%%%%%%%%%%%%%%%%%%%
\section*{Acknowledgements}

This work has been partially supported by a
MEC-DGESIC grant (Spain) through Project MTM2008-0621-C02-01/MTM,
and by the Comissionat per Universitats i Recerca de la Generalitat
de Catalunya.

%%%%%%%%%%%%%%%%%%%%%%%%%%%%%%%%%%%%%%%%%%%%%%%%%%%%
%%%%%%%%%%%%%%%%%%%%%%%%%%%%%%%%%%%%%%%%%%%%%%%%%%%%

\end{document}